\DeclareMathAlphabet{\mathpzc}{OT1}{pzc}{m}{it}
\renewcommand{\omit}[1]
\theoremstyle{plain}
\newcommand{\refnewtheoremn}[4]{
\newaliascnt{#1}{#2}
\newtheorem{#1}[#1]{#3}
\aliascntresetthe{#1}
\expandafter\providecommand\csname #1autorefname\endcsname{#4}}
\newcommand{\refnewtheorem}[3]{\refnewtheoremn{#1}{#2}{#3}{#3}}
\def\makeCal#1{
\expandafter\newcommand\csname c#1\endcsname{\mathcal{#1}}}
\def\makeBB#1{
\expandafter\newcommand\csname b#1\endcsname{\mathbb{#1}}}
\def\makeFrak#1{
\expandafter\newcommand\csname f#1\endcsname{\mathfrak{#1}}}
\edef\y{\@Alph\count@}
\newtheorem{thm}{Theorem}[section]
\newtheorem{theorem}{Theorem}[section]
\theoremstyle{definition}
\newcommand {\Hom}{\operatorname{Hom}}
\newcommand{\PGL}{\operatorname{PGL}}
\newcommand{\Quad}{\operatorname{Quad}}
\newcommand{\Pic}{\operatorname{Pic}}
\newcommand{\into}{\hookrightarrow}
\newcommand {\<}{\langle}
\renewcommand {\>}{\rangle}
\newcommand{\isom}{\cong}
\newcommand{\half}{\tfrac{1}{2}}
\newcommand{\tensor}{\otimes}
\newcommand{\rr}{r}
\newcommand{\mat}[4]{\begin{pmatrix}#1&#2\\#3&#4\end{pmatrix}}
\newcommand{\constant}{\operatorname{constant}}
\renewcommand{\O}{\mathcal{O}}
\renewcommand{\Im}{\operatorname{Im}}
\renewcommand{\Re}{\operatorname{Re}}
\newcommand{\CR}{\operatorname{CR}}
\newcommand{\e}{\varepsilon}
\newcommand{\MM}{M}
\begin{document}

\title{On the monodromy of the deformed cubic oscillator}
\author{Tom Bridgeland with an appendix by Davide Masoero}

\date{}

\begin{abstract}{We study  a second-order linear differential equation known as  the deformed cubic oscillator, whose isomonodromic deformations are controlled by the first Painlev{\'e} equation. We use the generalised monodromy map for this equation to give solutions to the Riemann-Hilbert problems of  \cite{RHDT}  arising from  the Donaldson-Thomas theory of  the A$_2$ quiver. These are the first known solutions to such  problems beyond the uncoupled case. The appendix by Davide Masoero contains a WKB analysis of the asymptotics of the monodromy map.}
\end{abstract}

\maketitle


\section{Introduction}

In this paper we study  the generalised monodromy map for a second-order linear differential equation known as  the deformed cubic oscillator. Our motivation  derives from a class of Riemann-Hilbert problems arising naturally in Donaldson-Thomas theory \cite{RHDT}, but we hope that our results will be of independent interest. We also suspect that they  can be substantially generalized.

\subsection{Deformed cubic oscillator}

Consider the second-order linear differential equation
\begin{equation}\label{de} y''(x)=Q(x,\hbar) \cdot y(x), \qquad Q(x,\hbar)=\hbar^{-2}\cdot {Q_0(x)}+\hbar^{-1}\cdot Q_1(x) +  Q_2(x),\end{equation}
where primes denote differentiation with respect to the complex variable $x\in \bC$, and the terms in the potential $Q(x,\hbar)$ are
\begin{equation}
\label{pot}Q_0(x)=x^3+ax+b, \qquad Q_1(x)=\frac{p}{x-q}+r, \qquad
Q_2(x)=\frac{3}{4(x-q)^2}+\frac{r}{2p(x-q)}+\frac{r^2}{4p^2}.\end{equation}
We view the equation \eqref{de}  as being specified by a point of the complex manifold \begin{equation}\MM=\bigg\{(a,b,q,p,r)\in \bC^5: p^2=q^3+aq+b\text{ and } 4a^3+27b^2 \neq 0, \ p\neq 0\bigg\},\end{equation}
together with a nonzero complex number $\hbar\in \bC^*$ which for now we will consider to be  fixed. We also  introduce the complex manifold
\begin{equation}S=\big\{(a,b)\in \bC^2: 4a^3+27b^2\neq 0\big\},\end{equation}
and the obvious projection map $\pi\colon \MM\to S$.

\begin{remark}\label{cobblers}The author's  interest in this topic stems from the study of a class of Riemann-Hilbert problems arising in Donaldson-Thomas theory \cite{RHDT,RHDT2}. These problems are specified by a stability condition on a CY$_3$ triangulated category, and involve maps from the complex plane to an algebraic torus with prescribed discontinuities along a  collection of rays. In this context the space $S$ arises as (a discrete quotient of) the space of stability conditions on the CY$_3$ triangulated category associated to the A$_2$ quiver \cite{BQS}. As we explain below,  the monodromy map for the equation \eqref{de} gives solutions to the corresponding Riemann-Hilbert problems. These are the first examples  of  such Riemann-Hilbert problems (beyond the uncoupled case)  for which a complete solution  is known.
\end{remark}

The expression $Q_2(x)$  appearing in \eqref{pot} is chosen to ensure that the  point $x=q$ is an apparent singularity of the equation \eqref{de}: analytically continuing any solution around this point changes its sign. Thus the generalised monodromy  of the equation consists only of the Stokes data at the irregular singularity $x=\infty$. As we recall below, this defines a point of the
 quotient space
\begin{equation}
\label{vvv}V=\Big\{\psi\colon \bZ/5\bZ\to \bP^1: \psi(i+1)\neq \psi(i)\text{ for all }i\in \bZ/5\bZ\Big\} \Big / \PGL_2,\end{equation}
which is easily seen to be a two-dimensional complex manifold.
We thus obtain a holomorphic monodromy map
\begin{equation}
\label{mon} F(\hbar)\colon \MM \to V.\end{equation}
More precisely, this map depends on a labelling of the Stokes sectors for the equation \eqref{de}, which in concrete terms amounts to a choice of fifth root of $\hbar^2$.

\begin{remark}
\label{rem}
Note that  the two points of the space $\MM$
\[(a,b,q,p,r), \qquad \Big(a,b+r\hbar+\frac{r^2\hbar^2}{4p^2},q,p+\frac{r\hbar}{2p},0\Big),\]
determine the same equation \eqref{de}. Thus  for many purposes we can  reduce to the situation when $r=0$.  In that  case \eqref{de} coincides, up to trivial changes of variables, with an equation which has been studied in connection with the first Painlev{\'e} equation for many years (see  \cite[Chapter 4]{KT} and \cite{myphd} for references).
 Nonetheless, it will be important in what follows to consider  the full form \eqref{pot} of the potential, so that the fibres of the map $\pi\colon \MM\to S$ are half-dimensional, and have the same dimension as the monodromy manifold $V$.
\end{remark}

Each point $s=(a,b)\in  S$ determines a meromorphic  quadratic differential on $\bP^1$
\begin{equation}
\label{diff}Q_0(x) \, dx^{\tensor 2} = (x^3+ax+b) \, dx^{\tensor 2}\end{equation}
with a single pole of order seven at $x=\infty$.
There is a corresponding branched double cover \begin{equation}
\label{double}p\colon X_s\to \bP^1,\end{equation} which is the projectivization of the non-singular plane cubic
\begin{equation}
\label{app}X_{s}^\circ=\big\{(x,y)\in \bC^2: y^2=x^3+ax+b\big\}.\end{equation}
  We also introduce the associated homology groups\begin{equation}
\label{homo}\Gamma_s=H_1(X_s,\bZ)\isom \bZ^{\oplus 2},\end{equation}
which we equip with the standard  skew-symmetric intersection form $\<-,-\>$.

\begin{remark}
\label{genival}
Given an integer $g\geq 0$, and a non-empty collection of  integers $m=\{m_1,\cdots,m_d\}$, with each $m_i\geq 2$, there is a complex orbifold $\Quad(g,m)$ parameterizing equivalence-classes of pairs $(S,\phi)$, where $S$ is a  compact Riemann surface of genus $g$, and $\phi$ is a meromorphic quadratic differential on $S$,  having simple zeroes, and poles of the given  orders $m_i$.
It is shown in \cite{BS} that  to such data $(g,m)$ there is naturally associated a CY$_3$ triangulated category $\cD(g,m)$, and that  the space $\Quad(g,m)$ arises as a discrete quotient of the space of stability conditions on $\cD(g,m)$.\footnote{In fact this is a slight over-simplification: it is necessary  to slightly enlarge the space $\Quad(g,m)$ by allowing the zeroes of $\phi$ to collide with any of the poles of order $m_i=2$: see \cite[Section 6]{BS} for details.} We expect that the story we describe here (which corresponds to the case $g=0$, $m=\{7\}$) extends to this more general situation, although we do not yet understand the full details of this.
\end{remark}


 Since the dimensions of the spaces $\MM$ and $V$ are four and two respectively,  we might expect the derivative of the monodromy map \eqref{mon}  to have a two-dimensional kernel, and indeed in Section 2 we  show that the map $F(\hbar)$ is invariant under the two flows \begin{equation}
\label{fir1}-\frac{1}{\hbar}\frac{\partial}{\partial r}+\bigg(\frac{\partial}{\partial b}+\frac{1}{2p}\frac{\partial}{\partial p} +\frac{r}{2p^2} \frac{\partial}{\partial r}\bigg),\end{equation}
\begin{equation} \label{sec1}-\frac{2p}{\hbar} \frac{\partial}{\partial q}-\frac{3q^2+a}{\hbar}\frac{\partial}{\partial p}+\bigg(\frac{\partial}{\partial a}-q\frac{\partial}{\partial b}-\frac{r}{p}\frac{\partial}{\partial q}- \frac{r(3q^2+a)}{2p^2}\frac{\partial}{\partial p}-\frac{r^2}{2p^3} (3q^2+a) \frac{\partial}{\partial r}\bigg).\end{equation}
Since the sub-bundle of the  tangent bundle  spanned by these flows is everywhere transverse to the fibres of  the map $\pi\colon \MM\to S$, it defines an Ehresmann connection on this map, which we will refer to as the isomonodromy connection.

It follows from the existence of the isomonodromy connection that the monodromy map $F(\hbar)$ restricts to give local isomorphisms
\begin{equation}\label{fib}F(\hbar)\colon \MM_s\to V,\end{equation}
between the fibres $\MM_s=\pi^{-1}(s)$ of the projection $\pi\colon \MM\to S$, and the monodromy manifold $V$.
What is  interesting for us is that, as we  will explain below,  both sides of the map \eqref{fib} can be more-or-less identified with the algebraic torus \begin{equation}\label{tori}\bT_s=H^1(X_s,\bC^*)\isom \Hom_\bZ(\Gamma_s,\bC^*)\isom (\bC^*)^2.\end{equation}
Using these identifications allows us to do two things:\smallskip

\begin{itemize}
\item[(i)] We can view the isomonodromy connection  as an Ehresmann connection on the bundle over $S$ whose fibres are the algebraic  tori $\bT_s$. We give a Hamiltonian form for this connection in  Theorem \ref{one}, and show that it gives an example of a Joyce structure in the sense of \cite{RHDT2}. This structure then induces a flat, torsion-free connection on the tangent bundle of $S$, which is described by Theorem \ref{one2}. \smallskip
\item[(ii)] For each point $s\in S$, we can view the monodromy map \eqref{fib}  as  giving a partially-defined automorphism of the algebraic torus $\bT_s$, depending  in a piecewise holomorphic way on the parameter   $\hbar\in \bC^*$. This allows us in  to solve a family of Riemann-Hilbert  problems of the type discussed in \cite{RHDT,RHDT2}. A precise summary of this  claim appears as Theorem \ref{two} below.
\end{itemize}

In the next two subsections we will explain these two points in more detail.

\subsection{Isomonodromy flows}

The homology groups \eqref{homo} form a local system of lattices over $S$, which induces the Gauss-Manin connection on the vector bundle on $S$ whose fibres are the  spaces $H_1(X_s,\bC)$.  In concrete terms, we can construct a basis of homology classes by taking inverse images under the double cover \eqref{double} of paths in $\bC$ connecting the zeroes of $Q_0(x)$. The Gauss-Manin connection is then obtained by keeping these paths locally constant as $Q_0(x)$ varies.

Let us choose a basis $(\gamma_1,\gamma_2)\subset \Gamma_s$
at some point $s\in S$, and extend it to nearby fibres using the Gauss-Manin connection. A particular case of a general result of \cite{BS} shows that  
 the expressions \begin{equation}
\label{zz}z_i=\int_{\gamma_i} \sqrt{Q_0(x)}\, dx=\int_{\gamma_i} y \, dx,\end{equation}
define a local system of co-ordinates  $(z_1,z_2)$ on the manifold $S$.

Consider  the bundle $\pi \colon \bT\to S$ whose fibres are the tori \eqref{tori}.
There are obvious  local co-ordinates $(\theta_1,\theta_2)$ on the fibres $\bT_s$  obtained by writing
\[\xi(\gamma_i)=\xi_i=\exp( \theta_i)\in \bC^* \qquad \big(\xi\colon \Gamma_s\to \bC^*\big)\in \bT_s,\]
and we therefore obtain local co-ordinates $(z_1,z_2,\theta_2,\theta_2)$ on the total space $\bT$.

 In Section \ref{abhol} we introduce a holomorphic map $\Theta\colon \MM\to \bT$, commuting with the two projections to $S$,
and given in local co-ordinates (up to multiples of $\pi i$) by 
\begin{equation}
\label{xi}\theta_i= -\int_{\gamma_i}\frac{Q_1(x)\, dx}{2\sqrt{Q_0(x)}}=-\int_{\gamma_i} \bigg(\frac{p}{x-q} + r\bigg) \frac{dx}{2y}.\end{equation}
This expression  is familiar in WKB analysis as the constant term in the expansion of the Voros symbols (see Section \ref{wkb} below).

In Section \ref{abhol} we give a more geometric description of the map $\Theta$. For each  point $s\in S$ we  show that there is a natural embedding of  the fibre $\MM_s=\pi^{-1}(s)$  into the space of pairs $(L,\nabla)$ consisting of a holomorphic line bundle $L$ on the  elliptic curve $X_s$, equipped with a holomorphic connection $\nabla$. The map $\Theta$ then sends such a pair $(L,\nabla)$ to its holonomy, viewed as an element of $\bT_s$.

We shall refer to the map $\Theta$ as the abelian holonomy map. It follows from the above description that it is an open embedding.
We can use it   to push forward the isomonodromy flows \eqref{fir1}-\eqref{sec1}. This gives an Ehresmann connection on a dense open subset of the bundle  $\pi \colon \bT\to S$. The following result shows that this connection has precisely the form considered in \cite{RHDT2}.

\begin{thm}
\label{one}
When written in the co-ordinates $(z_1,z_2,\theta_1,\theta_2)$, the push-forward of the isomonodromy flows \eqref{fir1}-\eqref{sec1} along the map $\Theta\colon \MM\to \bT$ take the Hamiltonian form
\begin{equation}
\label{floww}\frac{\partial}{\partial z_i} + \frac{1}{\hbar} \cdot \frac{\partial}{\partial \theta_i} +\frac{\partial^2 J}{\partial \theta_i\partial \theta_1}\cdot \frac{\partial}{\partial \theta_2}-\frac{\partial^2 J}{\partial \theta_i\partial \theta_2}\cdot \frac{\partial}{\partial \theta_1}, \end{equation}
where $J\colon \bT\to \bC$ is a meromorphic function with no poles on  the locus $\theta_1=\theta_2=0$. When pulled-back to $M$ using the abelian holonomy map it is given by the expression
\[\frac{1}{2\pi i}\cdot (J\circ\Theta)= -\frac{ 2ap^2+3p(3b-2aq)r+(6aq^2-9bq+4a^2)r^2 - 2apr^3}{4(4a^3+27b^2)p}.\]
\end{thm}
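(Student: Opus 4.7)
The plan is to push the isomonodromy plane field forward along $\Theta$ and verify term by term that it takes the Hamiltonian form of \eqref{floww}. Work in the coordinates $(a,b,q,p,r)$ on $\MM$ and push forward to coordinates $(z_1,z_2,\theta_1,\theta_2)$ on $\bT$. The $S$-projections of the vector fields \eqref{fir1} and \eqref{sec1} are $\partial_b$ and $\partial_a-q\partial_b$ respectively; using the Jacobian of $(a,b)\mapsto (z_1,z_2)$ one can take linear combinations to obtain lifts $U_1, U_2$ projecting to $\partial/\partial z_1$ and $\partial/\partial z_2$, and the remaining task is to show that $\Theta_*U_i$ equals \eqref{floww}.

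The coefficients of $\partial/\partial\theta_j$ in the push-forward are obtained by evaluating $U_i(\theta_j)$. Since $Q_0$ is independent of $(q,p,r)$, the derivatives $\partial\theta_j/\partial r$, $\partial\theta_j/\partial p$ and $\partial\theta_j/\partial q$ are integrals of elementary rational differentials on $X_s$ over the fixed cycle $\gamma_j$, while the $a,b$-derivatives involve the Gauss--Manin connection and reduce to the elliptic integrals $\partial z_j/\partial a=\int_{\gamma_j}x\,dx/(2y)$, $\partial z_j/\partial b=\int_{\gamma_j}dx/(2y)$ together with second-kind periods. Separate the $\hbar^{-1}$ and regular parts of $U_i(\theta_j)$: the singular pieces coming from the $\hbar^{-1}$ terms in \eqref{fir1}--\eqref{sec1} combine through these period identities to give $\delta_{ij}$, producing exactly the term $\hbar^{-1}\partial/\partial\theta_i$ in \eqref{floww}. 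It then remains to identify the regular part of $U_i(\theta_j)$ with the Hamiltonian vector field of $\partial J/\partial\theta_i$ for the specified $J$.

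To verify this, use implicit differentiation: on each fibre $\MM_s$ the Jacobian of $(q,p,r)\mapsto(\theta_1,\theta_2)$ (modulo the redundancy of Remark \ref{rem}) is invertible, so $\partial/\partial\theta_i$ can be expressed as a linear combination of $\partial/\partial q,\partial/\partial p,\partial/\partial r$. Apply this rule twice to the explicit rational function $J\circ\Theta$ and compare with the regular part of $U_i(\theta_j)$ computed above; after using the Picard--Fuchs relations for the elliptic pencil $y^2=x^3+ax+b$ to reduce all periods to rational functions of $(a,b,q,p,r)$, both sides become rational expressions that can be checked directly. The main obstacle is the bookkeeping in this rational-function comparison. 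A practical simplification is to restrict first to the slice $r=0$ via Remark \ref{rem}, where $(J\circ\Theta)/(2\pi i)$ collapses to $-ap/[2(4a^3+27b^2)]$ and many cross-terms vanish, verify the Hamiltonian identity in this simpler setting, and then recover the full $r$-dependence from the equivariance provided by Remark \ref{rem} together with the normalization that $J$ be regular on the locus $\theta_1=\theta_2=0$.
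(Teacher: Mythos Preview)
Your outline has the right shape but two genuine difficulties are glossed over, and the paper's proof is organised precisely to avoid them.

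First, the claim that after Picard--Fuchs ``all periods reduce to rational functions of $(a,b,q,p,r)$'' is false as stated: the periods $\omega_i=\int_{\gamma_i}dx/2y$ and $\eta_i=\int_{\gamma_i}x\,dx/2y$ are transcendental in $(a,b)$, and the derivatives $\partial\theta_j/\partial a$, $\partial\theta_j/\partial b$ are not combinations of these alone but involve the further incomplete integrals $\kappa_i(v)=\tfrac18\int_{(q,-p)}^{(q,p)}x^i\,dx/y^3$ depending on the fibre point. So the two sides of your proposed identity are not a priori rational, and the verification cannot be a straight comparison of rational expressions. The paper sidesteps this by introducing intermediate fibre coordinates $(\theta_a,\theta_b)$, linearly related to $(\theta_1,\theta_2)$ via the matrix $(\omega_i,\eta_i)$, in which the inverse fibre derivative is purely rational: $\Theta_*^{-1}(\partial_{\theta_a})=-2p\,\partial_q-q\,\partial_r$ and $\Theta_*^{-1}(\partial_{\theta_b})=-\partial_r$. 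In these coordinates the push-forward of the flows is computed and shown to be Hamiltonian for a function $K=J+C$ with $C$ an explicit cubic in $(\theta_a,\theta_b)$; the $\kappa_i(v)$ appear in the pushed-forward flows but drop out after one $\theta$-differentiation, so only rational identities remain. The passage from $(a,b,\theta_a,\theta_b)$ to $(z_i,\theta_j)$ is then handled by a general coordinate-change lemma (Proposition~\ref{fur}) showing that the Hamiltonian form is preserved up to exactly such a cubic correction, which is why $J$ itself, rather than $K$, is the answer.

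Second, regularity of $J$ on $\theta_1=\theta_2=0$ is a conclusion, not a normalisation you may impose: the locus corresponds under $\Theta$ to the trivial line bundle with connection, which lies outside the image of $\MM$, so one must extend and compute a limit. The paper does this by an explicit Laurent expansion in the uniformiser $v$ at the missing point, checking that the apparent $p^{-1}$ pole in the formula for $J\circ\Theta$ cancels. Your $r=0$ shortcut does not help here, and the ``equivariance from Remark~\ref{rem}'' you invoke is an $\hbar$-dependent reparametrisation of $\MM$, not a symmetry of $J$ itself, so it does not recover the $r$-dependence for free.
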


The pencil of flat non-linear connections \eqref{floww} defines a geometric structure on the space $S$ which is studied  in detail in \cite{RHDT2} and called there a Joyce structure. The author expects such  structures to exist on spaces of stability conditions of  CY$_3$ triangulated categories in much greater generality, and Theorem \ref{one} provides an interesting first example. 
We call the function $J$ the Joyce function; some of its basic properties are discussed in Section \ref{bp} below. 

The Joyce function $J=J(z_1,z_2,\theta_1,\theta_2)$ is easily seen to be odd in the variables $\theta_1,\theta_2$, and it follows that the flows \eqref{floww} preserve the section of the bundle $\pi\colon \bT\to S$ defined by setting  $\theta_1=\theta_2=0$. They therefore induce a linear connection on the normal bundle to this section, which can in turn be identified with the tangent bundle to $S$ via the map
\[\frac{\partial }{\partial \theta_i}\mapsto \frac{\partial}{\partial z_i}.\] In this way we obtain a linear, flat, torsion-free connection
 on the tangent bundle of $S$, given explicitly by the formula
 \[\nabla_{\frac{\partial}{\partial z_i}}\Big(\frac{\partial}{\partial z_j}\Big)=  \frac{\partial^3 J}{\partial \theta_i \, \partial \theta_j \, \partial \theta_2}\Big|_{\theta=0} \cdot \frac{\partial}{\partial z_1}-\frac{\partial^3 J}{\partial \theta_i \, \partial \theta_j \, \partial \theta_1}\Big|_{\theta=0} \cdot \frac{\partial}{\partial z_2}.\]
 We call it the linear Joyce connection. In Section \ref{wwp} we prove

\begin{thm}
\label{one2}
The functions $(a,b)$ are flat co-ordinates for the linear Joyce connection.
\end{thm}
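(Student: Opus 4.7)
The plan is to unfold the flatness condition into an explicit equation in the periods and then verify it by direct computation. In the coordinates $(z_1,z_2)$ the linear Joyce connection $\nabla$ has Christoffel symbols $\Gamma^1_{ij}=\partial^3 J/\partial\theta_i\partial\theta_j\partial\theta_2|_{\theta=0}$ and $\Gamma^2_{ij}=-\partial^3 J/\partial\theta_i\partial\theta_j\partial\theta_1|_{\theta=0}$, so the coordinates $(a,b)$ will be flat precisely when the standard transformation identity
$$\frac{\partial^2 z_k}{\partial w^\alpha\,\partial w^\beta}+\sum_{i,j}\Gamma^k_{ij}\,\frac{\partial z_i}{\partial w^\alpha}\,\frac{\partial z_j}{\partial w^\beta}=0,\qquad (w^1,w^2)=(a,b),$$
holds for each $k,\alpha,\beta$. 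I would compute both terms explicitly and check this identity.

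For the first term I would use the period integral formula $z_i=\int_{\gamma_i}y\,dx$ with $y^2=x^3+ax+b$ to get $\partial z_i/\partial a=\int_{\gamma_i}x\,dx/(2y)$, $\partial z_i/\partial b=\int_{\gamma_i}dx/(2y)$, and second derivatives which are integrals of $x^\ell/y^3$ for $\ell=0,1,2$ along $\gamma_i$. Adding appropriate exact differentials $d(f/y)$ with $f$ a polynomial in $x$ reduces each such integral, modulo the Picard--Fuchs relation of the Weierstrass family, to an explicit rational combination of the first-order periods in $a$ and $b$.

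For the second term the only access to $J$ is via its pullback $J\circ\Theta$ to $M$. Since $\Theta$ commutes with $\pi$ and is fibrewise an open embedding, the $\partial_{\theta_i}$-derivatives at a fibre $M_s$ can be computed by inverting the Jacobian of $\Theta|_{M_s}$; its entries are themselves period integrals obtained by differentiating \eqref{xi} in $(q,p,r)$ subject to $p^2=q^3+aq+b$, and in particular $\partial\theta_i/\partial r=-\partial z_i/\partial b$, which gives a crucial compatibility between the two calculations. Iterating the chain rule three times on the rational expression for $J\circ\Theta$ at the unique point of $M_s$ where $\theta_1=\theta_2=0$---characterised by the meromorphic third-kind differential $(p/(x-q)+r)\,dx/(2y)$ becoming exact on the elliptic curve $X_s$---then produces $\Gamma^k_{ij}$ as a rational combination of those same first-order periods.

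Substituting the outputs of the two steps into the flatness equation reduces the theorem to an algebraic identity in the periods, modulo the Picard--Fuchs relations. The hard part will be purely computational: iterated differentiation of the degree-three rational function $J\circ\Theta$ together with inversion of the $\Theta$-Jacobian produces lengthy expressions, and the essential content lies in organising them so that the third-derivative side matches the Picard--Fuchs-reduced second-derivative side. The coincidence $\partial\theta_i/\partial r=-\partial z_i/\partial b$ already suggests that this alignment is governed by the residue structure of the third-kind differential $(p/(x-q))\,dx/(2y)$ on $X_s$, which should make the final verification tractable.
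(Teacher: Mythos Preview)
Your approach has a concrete gap: there is no point of $M_s$ with $\theta_1=\theta_2=0$. By Remark \ref{defined}, the complement of the image of the abelian holonomy map $\Theta_s\colon M_s\hookrightarrow \bT_s$ consists precisely of the holonomies of connections on the four spin bundles, and the point $\xi_1=\xi_2=1$ is the holonomy of the trivial connection on $\O_{X_s}$. Your characterisation of this point as the place where the third-kind differential $(p/(x-q)+r)\,dx/(2y)$ becomes exact cannot be right either, since that form has nonzero residues at $(q,\pm p)$. So you cannot simply evaluate at a point of $M_s$; any computation of $\partial^3 J/\partial\theta_i\partial\theta_j\partial\theta_k|_{\theta=0}$ via $\Theta$ requires a genuine limiting argument as $(q,p,r)\to\infty$ along the branch $\theta_a=v$, $\theta_b=c$ with $v,c\to 0$ (cf.\ the expansions $q=\wp(v)=v^{-2}+\cdots$ used in the proof of Theorem \ref{again}).

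Even with this limit supplied, your route is substantially more laborious than the paper's. The paper avoids computing the third derivatives of $J$ altogether: Proposition \ref{fur} and Theorem \ref{hi} already give the isomonodromy flows in the $(a,b,\theta_a,\theta_b)$ coordinates via a function $K=J+C$ with $C$ cubic in $(\theta_a,\theta_b)$, and the same reasoning that produces the formula \eqref{nab} shows that in those coordinates the linear Joyce connection has Christoffel symbols $\partial^3 K/\partial\phi_i\partial\phi_j\partial\phi_k|_{\phi=0}$. These are exactly the quantities in \eqref{bol}--\eqref{hol}, written as rational functions of $(q,p,r)$, and the limiting argument from the proof of Theorem \ref{again} shows they all vanish. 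In effect the cubic correction $C$ already packages the second derivatives $\partial^2 z_k/\partial w^\alpha\partial w^\beta$ that you propose to extract via Picard--Fuchs, so the identity you want to verify is equivalent to the vanishing of the $K$-derivatives, and the paper checks the latter directly.
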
 

We will comment on the significance of this result after the statement of Theorem \ref{two} below.

\subsection{Riemann-Hilbert problem}
Consider now the right-hand side of the monodromy map \eqref{fib}. It is well known that the manifold $V$ has a system of birational co-ordinate systems
\begin{equation}
\label{coord}X_T\colon V\dashrightarrow (\bC^*)^2,\end{equation}
indexed by the triangulations $T$ of a regular pentagon.
These co-ordinate systems are usually called Fock-Goncharov co-ordinates, since they appear in a much more general context  in \cite{FG}. We recall their definition in Section \ref{soln2}. The co-ordinates corresponding to  different triangulations are related by post-composition with explicit birational automorphisms of $(\bC^*)^2$.

Let us fix a point $(a,b,q,p,r)\in \MM$. For generic $\hbar\in \bC^*$, the horizontal trajectory structure of the quadratic differential \begin{equation}\label{hq}\hbar^{-2}\cdot Q_0(x) \, dx^{\tensor 2}= \hbar^{-2}\cdot (x^3+ax+b) \, dx^{\tensor 2}\end{equation}  determines a  triangulation $T(\hbar)$ of a regular pentagon. This triangulation  is  well-defined when $\hbar\in \bC^*$  lies in the complement of the  finitely-many rays on which the quadratic differential \eqref{hq} has a finite-length horizontal trajectory.  Following \cite{GMN2} we refer to it as  the WKB triangulation.

When $T=T(\hbar)$ is a WKB triangulation,  the algebraic torus appearing on the right-hand side of \eqref{coord} is naturally identified with the torus $\bT_s$ associated to the point $s=(a,b)\in S$.
Keeping the point  $(a,b,q,p,r)\in \MM$ fixed, let us now consider the map
\begin{equation}\label{map}X \colon \bC^*\to \bT_s,\qquad X(\hbar)=X_{T(\hbar)}\big(F(\hbar)(a,b,q,p,r)\big),\end{equation}
which sends a point $\hbar\in \bC^*$ to the Fock-Goncharov co-ordinates of the monodromy of the equation \eqref{de} with respect to the WKB triangulation $T(\hbar)$. Using our  chosen basis $(\gamma_1,\gamma_2)$ of $\Gamma_s$ we can identify $\bT_s$ with $(\bC^*)^2$ and decompose $X(\hbar)$ into its components \[X(\hbar)=(x_1(\hbar),x_2(\hbar))\in (\bC^*)^2, \qquad x_i(\hbar)=X(\hbar)(\gamma_i)\in \bC^*.\]

The map \eqref{map} has three important properties, which we  explain in detail in Section \ref{soln2}:
\begin{itemize}
\item[(i)]
As $\hbar\in \bC^*$ crosses a ray where the differential \eqref{hq} has a finite-length horizontal trajectory,  the WKB triangulation $T(\hbar)$ changes, and the map $X(\hbar)$ undergoes a discontinuous jump obtained by post-composing with an explicit birational transformation of the torus $\bT_s$.\smallskip

\item[(ii)] The WKB approximation can be used to show that  as $\hbar\to 0$ along a ray in $\bC^*$
\[x_i(\hbar)\cdot  \exp\Big(\frac{z_i}{\hbar}-\theta_i\Big)\to 1\]
 where the $\theta_i$ are given by \eqref{xi}. This statement is proved in  the Appendix.\smallskip
 
\item[(iii)] A homogeneity property of the potential \eqref{pot}  allows us to conclude  that  as $\hbar\to \infty$ the functions $x_i(\hbar)$ have a well-defined limit.
\end{itemize}

These properties are exactly  the conditions required for the map $X(\hbar)$  to give a 
 solution to one of the Riemann-Hilbert problems defined in  \cite{RHDT}. 
To state this more precisely, recall first
  the definition of a finite BPS structure  $(\Gamma,Z,\Omega)$ from \cite{RHDT}. It consists of

 \begin{itemize}
\item[(a)] A finite-rank free abelian group $\Gamma\isom \bZ^{\oplus n}$, equipped with a skew-symmetric form \[\<-,-\>\colon \Gamma \times \Gamma \to \bZ,\]

\item[(b)] A homomorphism of abelian groups
$Z\colon \Gamma\to \bC$,\smallskip

\item[(c)] A map of sets
$\Omega\colon \Gamma\to \bQ$ such that $\Omega(\gamma)=0$ for all but finitely-many elements $\gamma\in \Gamma$, and satisfying the symmetry property $\Omega(-\gamma)=\Omega(\gamma)$.
\end{itemize}
The group $\Gamma$ is called the charge lattice, and the homomorphism $Z$ the central charge. The rational numbers $\Omega(\gamma)$ are called the BPS invariants.

As we explain in Section 6, each point $s\in S$ determines such a BPS structure $(\Gamma_s,Z_s,\Omega_s)$. 
The charge lattice  is the homology group $\Gamma_s=H_1(X_s,\bZ)$ equipped  with its intersection form $\<-,-\>$. The central charge $Z_s\colon \Gamma_s\to \bC$  is defined by the formula 
\[Z_s(\gamma)=\int_{\gamma} \sqrt{Q_0(x)} \, dx\in \bC.\]
Assuming that the point $s\in S$ is generic, in the sense that the image of $Z_s$ is not contained in a line, the 
BPS invariants $\Omega_s(\gamma)\in \bZ$ count the number of finite-length trajectories  of the differential \eqref{diff} whose lifts to $X_s$ define the given class $\gamma\in \Gamma$.

It is explained in \cite{RHDT}  how to associate a Riemann-Hilbert problem to a finite BPS structure. This problem involves piecewise holomorphic (or meromorphic) maps into the associated algebraic torus $\bT$, and depends on an element $\xi\in \bT$ called the constant term. Our final result is

\begin{thm}
\label{two}
Take a point $(a,b,q,p,r)\in \MM$ and let $(\Gamma_s,Z_s,\Omega_s)$ be the BPS structure determined by the corresponding point $(a,b)\in S$. 
Then the  map  \eqref{map} gives a meromorphic solution to the Riemann-Hilbert problem for this BPS structure, with constant term $\xi\in \bT_s$ defined by \eqref{xi}.
\end{thm}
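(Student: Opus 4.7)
The plan is to verify the three defining properties of a solution to the Riemann-Hilbert problem of \cite{RHDT} attached to the BPS structure $(\Gamma_s,Z_s,\Omega_s)$, namely: (a) the correct jumping behaviour of $X(\hbar)$ across the active rays of the BPS structure, (b) the prescribed $\hbar\to 0$ asymptotics, and (c) controlled behaviour as $\hbar\to\infty$. These correspond exactly to properties (i), (ii), (iii) summarised after equation \eqref{map}, so the argument reduces to checking that those three properties translate precisely into the conditions of the Riemann-Hilbert problem of \cite{RHDT}.

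As a preliminary step I would identify the BPS structure concretely. The central charge $Z_s$ on $\Gamma_s=H_1(X_s,\bZ)$ is by definition the period map \eqref{zz}, so this matches $z_i=Z_s(\gamma_i)$ by construction. For a generic point $s\in S$ the quadratic differential $Q_0(x)\, dx^{\otimes 2}$ has exactly five finite-length saddle trajectories, giving active classes $\pm\gamma_1,\pm\gamma_2,\pm(\gamma_1+\gamma_2)$, each carrying BPS invariant $1$; the active rays are the rays $\ell_\gamma=Z_s(\gamma)\cdot\bR_{>0}$. The geometry of the WKB triangulation $T(\hbar)$ and its behaviour as $\hbar$ varies is then the one studied in \cite{BS,GMN2}.

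For part (a) I would combine two ingredients. First, the general theory of \cite{BS} shows that as $\hbar$ crosses a ray on which the differential \eqref{hq} has a finite-length horizontal trajectory representing a class $\gamma$, the WKB triangulation $T(\hbar)$ undergoes a flip across the corresponding edge. Second, by the transformation rule for Fock-Goncharov co-ordinates under a flip (recalled in Section \ref{soln2}), the map $X(\hbar)$ is post-composed with the cluster transformation $y_\alpha \mapsto y_\alpha(1-y_\gamma)^{\langle\gamma,\alpha\rangle}$, which is exactly the BPS automorphism $\mathbb{S}(\gamma)$ attached to the invariant $\Omega_s(\gamma)=1$ in \cite{RHDT}. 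This matches the prescribed jump; the identification of the torus $\bT_s$ with the codomain of $X_T$ when $T=T(\hbar)$, and the verification that the mutated co-ordinate chart is identified with $\bT_s$ via the \emph{flipped} labelling, are the points that need the most care. This will be the main obstacle, but in the A$_2$ case the combinatorics of flips on a pentagon is simple enough to be checked by hand.

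For part (b), the WKB asymptotic $x_i(\hbar)\cdot\exp(z_i/\hbar-\theta_i)\to 1$ as $\hbar\to 0$ along any non-active ray, with $\theta_i$ given by \eqref{xi}, is exactly the content of the Appendix; this identifies the constant term in the Riemann-Hilbert problem with the image $\xi=\Theta(a,b,q,p,r)\in\bT_s$ of the abelian holonomy map. Part (c) follows by exploiting the scaling symmetry of the potential \eqref{pot}: rescaling $x\mapsto t^{2/5}x$ and $(a,b,q,p,r)\mapsto (t^{4/5}a,t^{6/5}b,t^{2/5}q,t^{3/5}p,t^{1/5}r)$ together with $\hbar\mapsto t^{-1}\hbar$ leaves the equation \eqref{de} invariant up to conjugation, so the monodromy is unchanged. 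Taking $t\to\infty$ reduces the $\hbar\to\infty$ limit to computing the monodromy of a fixed (regular) limiting equation, yielding a finite limit for each $x_i(\hbar)$, which is the polynomial-growth condition at infinity. Assembling (a)--(c) gives that $X(\hbar)$ is a meromorphic solution to the Riemann-Hilbert problem with constant term $\xi$, as claimed.
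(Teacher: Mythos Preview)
Your approach is essentially the same as the paper's: Section~\ref{soln2} verifies (RH1)--(RH3) via exactly the three mechanisms you outline (Proposition~\ref{andy} for the jumping, Theorem~\ref{dav} and the Appendix for the $\hbar\to 0$ asymptotics, and the $\bC^*$-action of Remark~\ref{actions}(b) for the $\hbar\to\infty$ limit). Two small corrections to your sketch: at a generic point the BPS structure has either two or three active classes up to sign depending on the chamber (Proposition~\ref{proop}), not five saddle connections; and after identifying $\bT_{s,-}$ with $\bT_{s,+}$ via the canonical quadratic refinement the wall-crossing automorphisms read $y_\beta\mapsto y_\beta(1+y_\gamma)^{\langle\gamma,\beta\rangle}$ rather than with a minus sign, so be careful with this when matching against the Fock-Goncharov flip formula.
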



Let us return to the abstract context of Remark \ref{cobblers}, where the space $S$ 
appears as a discrete quotient of the space of stability conditions on the CY$_3$ triangulated category associated to the A$_2$ quiver. The BPS structures $(\Gamma_s,Z_s,\Omega_s)$ considered above then coincide with those defined by the Donaldson-Thomas theory of these stability conditions. Thus Theorem \ref{two} gives solutions to the Riemann-Hilbert problems defined by the A$_2$ quiver. It is worth noting in this context that  the space $V$ also has a natural representation-theoretic meaning, since it coincides with the cluster Poisson variety.

When viewed  from this abstract  point of view, the only natural local co-ordinates on  the stability space $S$ are the central charge co-ordinates $(z_1,z_2)$. The point of Theorem \ref{one2}  is that it gives a way to derive the flat structure on $S$ whose co-ordinates are $(a,b)$  from purely abstract considerations: one first solves the Riemann-Hilbert problem defined by the Donaldson-Thomas invariants to obtain the pencil of non-linear connections of Theorem \ref{one}, and then differentiates to obtain the linear connection of Theorem \ref{one2}. Unfortunately there is one crucial missing link in this chain of reasoning:  we currently have no characterisation or uniqueness result for the solution of Theorem \ref{two}.

\begin{remark}The statement of Theorem \ref{two} takes direct inspiration from the  work of Gaiotto, Moore and Neitzke \cite{GMN1,GMN2}. In particular, the use of the Fock-Goncharov co-ordinates for the WKB triangulation, and the resulting discontinuities in the map \eqref{map} are exactly as described in \cite[Section 7]{GMN2}. It is important to note however that the picture described here is strictly different to that of \cite{GMN2}. Although Gaiotto, Moore and Neitzke start with the same data of a BPS structure, they consider  a somewhat different Riemann-Hilbert problem, which has non-holomorphic dependence on the central charge $Z$. Instead of our monodromy map $F$, they solve their Riemann-Hilbert problem using a $C^{\infty}$ isomorphism  between the moduli spaces of irregular Higgs bundles and the wild character variety  $V$.  In physical terms what we are considering here is the conformal limit \cite{G} of their story.\end{remark}

\begin{remark} The constructions of this paper are closely related to the  ODE/IM correspondence. The author is unfortunately not qualified to describe this link in any detail. It is explained in \cite{G} and \cite[Appendix E]{GMN1}  that the Riemann-Hilbert problems considered here  can be solved, at least formally, by an integral equation known in the integrable systems literature as the Thermodynamic Bethe Ansatz (TBA). The fact that these TBA  equations  also appear in  the  analysis of Stokes data of ordinary differential equations goes back in some form to work of Sibuya and Voros, but was made more precise in the work of Dorey, Dunning, Tateo and others.  We refer the reader to  \cite{DDT} for a review of the ODE/TBA correspondence, and to \cite{Mar,Mas1} for  more recent papers which deal specifically with the cubic oscillator.\end{remark}

\subsection*{Acknowledgements} As explained above, this paper  owes a significant debt to the work of Gaiotto, Moore and Neitzke \cite{GMN2}. I have also benefitted from  useful conversations with  Dylan Allegretti, Kohei Iwaki, Dima Korotkin, Davide Masoero, Andy Neitzke and Tom Sutherland. 


\section{The deformed cubic oscillator}

In this section we discuss the generalised monodromy data of the deformed cubic oscillator equation \eqref{de}. We explain why this consists entirely of the Stokes data at $x=\infty$ and recall how this is parameterised by collections of subdominant solutions. We then derive the isomonodromy flow in the form \eqref{fir1}-\eqref{sec1}. This section contains only very minor extensions of previously known results. Similar material can be found for example in  \cite{piwkb,myphd}.

\subsection{Apparent singularity}

The first claim is that for any $\hbar\in \bC^*$ and $(a,b,q,p,r)\in \MM$ the equation \eqref{de} has an apparent singularity at $x=q$. By this we mean that the analytic continuation of any solution around this point has the effect of multiplying it by $\pm 1$ (and in our case the sign is $-1$). This statement follows  immediately from the identity
\[\bigg(\frac{p}{\hbar}+\frac{r}{2p}\bigg)^2=\frac{q^3 + aq + b}{\hbar^2}+\frac{r}{\hbar}  +\frac{r^2}{4p^2},\]
and the following well-known Lemma.

\begin{lemma}
Fix a point $q\in \bC$ and suppose that $Q(x)$ is a meromorphic function having a pole  at $x=q$. Suppose further that the Laurent expansion of $Q(x)$ at this point takes the form
\[Q(x)=\frac{3}{4(x-q)^2} +\frac{u}{x-q}+v +O(x-q) .\]
Then the differential equation
\begin{equation}
\label{anglesey}y''(x)=Q(x)\cdot y(x)\end{equation}
has an apparent singularity at $x=q$ precisely if the relation $u^2=v$ holds.
\end{lemma}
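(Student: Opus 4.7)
My plan is a direct Frobenius analysis at the point $x=q$. Set $t=x-q$ and look for solutions to \eqref{anglesey} of the form $y(t)=t^\rho\sum_{n\ge 0}a_n t^n$ with $a_0=1$. The indicial equation coming from the leading $\tfrac{3}{4}t^{-2}$ coefficient of $Q$ is $\rho(\rho-1)=\tfrac{3}{4}$, with roots $\rho_+=\tfrac{3}{2}$ and $\rho_-=-\tfrac{1}{2}$. Because the two exponents differ by the positive integer $2$, the larger-exponent solution $y_+=t^{3/2}(1+O(t))$ always exists as a convergent power series, while the smaller-exponent solution generically picks up a logarithmic term $y_-=t^{-1/2}(1+O(t))+c\cdot\log(t)\,y_+$, where the coefficient $c$ is determined by the Laurent data of $Q$.

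The next step is to compute $c$ explicitly, which I would do by substituting $y=t^{-1/2}\sum_{n\ge 0}a_n t^n$ into \eqref{anglesey} and matching powers of $t$. The recursion reads
\[
\bigl[(n-\tfrac{1}{2})(n-\tfrac{3}{2})-\tfrac{3}{4}\bigr]\,a_n = u\,a_{n-1}+v\,a_{n-2}+\text{(higher Laurent terms of $Q$)}\cdot(\text{earlier }a_k),
\]
with the convention $a_k=0$ for $k<0$. For $n=1$ this gives $a_1=-u$. For $n=2$ the bracket on the left vanishes, so the recursion degenerates to the compatibility condition $0=u\,a_1+v\,a_0=v-u^2$. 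Thus a pure (log-free) Frobenius expansion with exponent $-\tfrac{1}{2}$ exists precisely when $v=u^2$; in that case $a_2$ is free and all higher $a_n$ are determined, so we obtain two linearly independent analytic-times-$t^{\pm 1/2}$ solutions. If $v\ne u^2$, the standard reduction of order shows that the coefficient $c$ of the logarithm in $y_-$ is a nonzero multiple of $v-u^2$.

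Finally I conclude the iff statement by comparing monodromies. Continuing $t^{\pm 1/2}$ once counterclockwise around $t=0$ multiplies it by $-1$, so when $v=u^2$ the monodromy in the basis $(y_+,y_-)$ is $-I$, and the singularity is apparent with sign $-1$. When $v\ne u^2$, the log term contributes a nontrivial unipotent part to the monodromy matrix (it becomes $-I$ plus a nonzero off-diagonal entry proportional to $c$), which is not a scalar, so the singularity is not apparent. The only step with any content is the $n=2$ compatibility calculation; everything else is bookkeeping in the Frobenius method. The identity $(p/\hbar+r/(2p))^2=(q^3+aq+b)/\hbar^2+r/\hbar+r^2/(4p^2)$ stated in the text then matches $u^2=v$ for the potential \eqref{pot}, which is how the lemma is applied.
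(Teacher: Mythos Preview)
Your proof is correct and follows essentially the same Frobenius-method argument as the paper: both compute the indicial roots $\tfrac{3}{2}$ and $-\tfrac{1}{2}$, and both identify the $n=2$ step of the recursion for the $-\tfrac{1}{2}$ exponent as the degenerate one, yielding the compatibility condition $u^2=v$. You spell out the monodromy conclusion (scalar $-I$ versus non-scalar with a log term) a bit more explicitly than the paper, which defers to ``standard theory,'' but the substance is the same.
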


\begin{proof}
This is a standard calculation using the Frobenius method, and we just give  a sketch. We  look for a solution to \eqref{anglesey} of the form
\begin{equation}
\label{test}y(x)=\sum_{i=0}^\infty c_i (x-q)^{\lambda+i},\qquad c_i\in \bC,\end{equation}
with $c_0\neq 0$ and  $\lambda\in \bC$. This leads to a recurrence relation \begin{equation}
\label{rec}(\lambda+i)(\lambda+i-1) c_i =\tfrac{3}{4} c_i + u c_{i-1} + v c_{i-2} + \cdots\end{equation}
which is valid for all $i\geq 0$ if we define $c_i=0$ for $i<0$. In particular, taking $i=0,1,2$ we obtain the relations
\begin{equation}
\label{p}(\lambda^2-\lambda - \tfrac{3}{4})c_0=0, \qquad (\lambda^2+\lambda - \tfrac{3}{4}) c_1=u c_0, \qquad (\lambda^2+3\lambda + \tfrac{5}{4})c_2=uc_1+vc_0.\end{equation}
The first of these gives the indicial equation, whose roots are  $\lambda=\tfrac{3}{2}$ and $\lambda=-\tfrac{1}{2}$. When $\lambda=\tfrac{3}{2}$ it is easy to see that the recursion \eqref{rec} has a unique solution for each choice of $c_0$, and standard theory then shows that \eqref{test}  defines  a double-valued solution to \eqref{anglesey} near $x=q$.

When $\lambda=-\tfrac{1}{2}$ the second equation of \eqref{p} gives $c_1=-uc_0$, and the third equation then implies the stated condition $u^2=v$. Assuming this, the recursion again has a unique solution for each choice of $c_0$, and we obtain another double-valued solution to \eqref{anglesey} near $x=q$. The form of these two solutions shows that \eqref{anglesey} has an apparent singularity.
If the relation $u^2=v$ does not hold, standard theory shows that the second solution to \eqref{anglesey} has a logarithmic term, and the solutions then exhibit non-trivial monodromy around the point $x=q$, which is therefore not an apparent singularity.
\end{proof}

\subsection{Stokes data}

The analysis of the last section shows that the monodromy data of the equation \eqref{de} consists only of the Stokes data at the irregular singularity $x=\infty$.
We now briefly recall how this is defined. A more detailed exposition of this material can be found for example in  \cite[Section 5]{AB}. 
The Stokes sectors are
the sectors in $\bC$ bounded by the asymptotic vertical directions of the quadratic differential
\[\hbar^{-2}\cdot Q_0(x) dx^{\tensor 2},\]
which are easily seen to be the rays passing through the fifth roots of $-\hbar^{2}$. General theory \cite{Sib} shows that in each Stokes sector
there is a unique subdominant solution to \eqref{de} up to scale, with the defining property that it exhibits exponential decay as $x\to \infty$ in the sector. Moreover, the subdominant solutions in neighbouring sectors are linearly independent.

 Since the space of solutions to the equation \eqref{de} is a two-dimensional complex vector space, the subdominant solutions define a collection of five  points of $\bP^1$, 
well-defined up to the diagonal action of $\PGL_2$,  with the property that each consecutive pair of points is distinct. These points are naturally indexed by the Stokes sectors of the equation, and hence by the fifth roots of $\hbar^2$. Choosing one such root we can identify this set with $\bZ/5\bZ$ and so obtain a point in the 
 quotient space
\begin{equation}
\label{vv}V=\Big\{\psi\colon \bZ/5\bZ\to \bP^1: \psi(i+1)\neq \psi(i)\text{ for all }i\in \bZ/5\bZ\Big\} \Big / \PGL_2,\end{equation}
which  is easily seen to be a two-dimensional complex manifold \cite{Go}.
We call the resulting  map
\[F(\hbar)\colon \MM\to V\]
 the monodromy map. Note however that this is a mild abuse of notation since $F(\hbar)$ really depends on a choice of fifth root of $\hbar^2$. The map $F(\hbar)$ is holomorphic because the subdominant solutions vary holomorphically with parameters \cite{HS,Sib}.
  
 \begin{remark}
 \label{euros}
 There is an obvious action of the group $\bZ/5\bZ$ on the space $V$ obtained by precomposing the map $\psi$ in \eqref{vv} with the translations $i\mapsto i+j$ of $\bZ/5\bZ$.  It is easy to check that it has exactly two  fixed points, represented by the cyclically-ordered 5-tuples of points of $\bP^1$ of the form $(0,1,\infty,x,x+1)$, with $x\in \bC$ a solution to the golden ratio equation $x^2+x-1=0$.
 One way to avoid the choice of fifth root of $\hbar^2$ when defining the monodromy map $F(\hbar)$ is to consider it as taking values  in the complex orbifold obtained by quotienting $V$ by this action.
  \end{remark}


\subsection{Isomonodromy flow}
The following result gives a pair of flows on the four-dimensional manifold $\MM$ along which the monodromy map $F(\hbar)$ is constant. 
\begin{prop}
\label{isom}
For a fixed $\hbar\in \bC^*$ the monodromy map $F(\hbar)$ is preserved by the flows 
 \begin{equation}
\label{fir}-\frac{1}{\hbar}\frac{\partial}{\partial r}+\bigg(\frac{\partial}{\partial b}+\frac{1}{2p}\frac{\partial}{\partial p} +\frac{r}{2p^2} \frac{\partial}{\partial r}\bigg),\end{equation}
\begin{equation} \label{sec}-\frac{2p}{\hbar} \frac{\partial}{\partial q}-\frac{3q^2+a}{\hbar}\frac{\partial}{\partial p}+\bigg(\frac{\partial}{\partial a}-q\frac{\partial}{\partial b}-\frac{r}{p}\frac{\partial}{\partial q}- \frac{r(3q^2+a)}{2p^2}\frac{\partial}{\partial p}-\frac{r^2}{2p^3} (3q^2+a) \frac{\partial}{\partial r}\bigg).\end{equation}
\end{prop}

\begin{proof}
A straightforward calculation which we leave to the reader shows that the first flow \eqref{fir} preserves the potential $Q(x,\hbar)$, and hence the equation \eqref{de}. We defer the proof that the second flow preserves the monodromy map to the next subsection.
\end{proof}

Note that the flows of Proposition \ref{isom} span a two-dimensional sub-bundle of the tangent bundle of $\MM$, which is everywhere transverse to the kernel of the derivative of the projection map $\pi\colon \MM\to S$. This is the condition that the sub-bundle defines an Ehresmann connection on this map. We call it the isomonodromy connection.

\begin{remark} \label{rem2}When $r=0$ the equation \eqref{de} reduces to the deformed cubic oscillator of \cite{myphd}, and the flow \eqref{sec} becomes
\begin{equation}
\label{simp}\frac{da}{dt}=1, \qquad \frac{db}{dt}=-q, \qquad \frac{dq}{dt}=-\frac{2p}{\hbar}, \qquad \frac{dp}{dt}=-\frac{3q^2+a}{\hbar}.\end{equation}
Let us briefly recall the well-known Hamiltonian description of this flow, and the link with Painlev{\'e} equations. Fix the parameter  $\hbar\in \bC^*$, and consider the space $\bC^4$ with co-ordinates $(a,b,q,p)$ equipped with  the symplectic form
\[\omega=da\wedge db +\hbar \cdot dq\wedge dp.\]
Then \eqref{simp} is the flow  defined by the Hamiltonian
 \[H(a,b,q,p)=q^3+aq+b-p^2.\]
Since $da/dt=1$ we can set $t=a$. The flow \eqref{simp}  then implies that
\[\hbar^2 \cdot \frac{d^2q}{dt^2}=-2 \hbar \cdot \frac{dp}{dt}=6q^2+2t,\]
which, after rescaling, becomes the first Painlev{\'e} equation.
\end{remark}

\subsection{Proof of the isomondromy property}

Let us complete the proof of Proposition \ref{isom}. We must just show that the second flow \eqref{sec}  preserves the Stokes data. 
\begin{proof} Let us fix $\hbar\in \bC^*$ and consider the potential $Q=Q(x)$ to be  also a function of a variable $t\in \bC$, in such a way that the derivative with respect to $t$ gives the flow \eqref{sec}.  The condition for the Stokes data to be constant \cite{Ueno} is the existence of an extended flat connection of the form
 \begin{equation}
 \label{spiro}\nabla=d-\mat{0}{1}{Q(x,t)}{0} dx - B(x,t) dt,\end{equation}
 with $B(x,t)$ a  meromorphic matrix-valued function. Let us make the ansatz
 \[B(x,t)=\mat{-\half A'}{A}{A Q-\half A''}{\half A'},\]
 for some function $A=A(x,t)$, where primes denote derivatives with respect to $x$. The flatness condition for the connection \eqref{spiro} then becomes
   \begin{equation}\label{basicA}\frac{\partial^3 A}{\partial x^3}-4Q \frac{\partial A}{\partial x}- 2 \frac{\partial Q}{\partial x} A + 2 \frac{\partial Q}{\partial t}=0,\end{equation}
 an equation which goes back at least to Fuchs. We now take
 $A=(x-q)^{-1}$.
 Writing out equation \eqref{basicA}  gives
 \[ \frac{4}{(x-q)^2} Q(x) - \frac{2}{x-q} Q'(x) + 2\dot{Q}(x)-\frac{6}{(x-q)^4} =0,\]
 where dots denote differentiation with respect to  $t$. In detail this is
 \[\frac{4}{\hbar^2(x-q)^2} (x^3+ax+b) - \frac{2}{\hbar^2 (x-q)} (3x^2+a) +\frac{2}{\hbar^2}(\dot{a}x+\dot{b})
 +\frac{4}{\hbar(x-q)^2}\Big( \frac{p}{x-q}+ r\Big)\]\[+\frac{2p}{\hbar(x-q)^3}+\frac{2}{\hbar}\Big( \frac{\dot{p}}{x-q}+ \dot{r}\Big)+ \frac{2p\dot{q}}{\hbar(x-q)^2}  +\frac{6}{(x-q)^4} +\frac{3\dot{q}}{(x-q)^3}-\frac{6}{(x-q)^4}\]\[+\frac{3r}{p(x-q)^3}+\frac{r^2}{p^2(x-q)^2} -\frac{r\dot{p}}{p^2(x-q)}+\frac{\dot{r}}{p(x-q)}+\frac{r\dot{q}}{p(x-q)^2} -\frac{r^2 \dot{p}}{p^3}+\frac{r \dot{r}}{p^2}=0.\]
 The expression on the left-hand side of this equation is a rational function of $x$, with possible poles only at $x=q$ and $x=\infty$. To show that it is zero we consider the terms in the Laurent expansion at each of these points, which are
 \[(x-q)^{-3}: \quad  \frac{4p}{\hbar}+\frac{2p}{\hbar}+3\dot{q}+\frac{3r}{p},\]
 \[(x-q)^{-2}: \quad \frac{4}{\hbar^2} (q^3+aq+b)+ \frac{4r}{\hbar} +\frac{2p \dot{q}}{\hbar}+\frac{r^2}{p^2} +\frac{r\dot{q}}{p}, \]    \[(x-q)^{-1}: \quad  \frac{4}{\hbar^2} (3q^2+a)-\frac{2}{\hbar^2} (3q^2+a)+\frac{2\dot{p}}{\hbar}-\frac{r \dot{p}}{p^2} +\frac{\dot{r}}{p},\]
       \[x^1 : \quad \frac{4}{\hbar^2} -\frac{6}{\hbar^2}+\frac{2}{\hbar^2}\dot{a},\qquad x^0: \quad \frac{8q}{\hbar^2}-\frac{6q}{\hbar^2} + \frac{2}{\hbar^2} \dot{b}+\frac{2\dot{r}}{\hbar}-\frac{r^2 \dot{p}}{p^3}+\frac{r \dot{r}}{p^2}.\]
These are all easily checked to vanish under the given flow
\[\dot{a}=1,\quad \dot{b}=-q,\quad \dot{q}=-\frac{2p}{\hbar}-\frac{r}{p}, \quad \dot{p}=-\frac{3q^2+a}{\hbar} -\frac{r(3q^2+a)}{2p^2},\quad   \dot{r}=-\frac{r^2}{2p^3} (3q^2+a),\] which completes the proof.
\end{proof}


\section{Periods and the abelian holonomy map}
\label{abhol}

In this section we first consider the period co-ordinates $(z_1,z_2)$  on the space $S$ and the relationship with the affine co-ordinates $(a,b)$. This is a standard calculation with Weierstrass elliptic functions. We then consider  the expression \eqref{xi} from the introduction and explain its conceptual meaning 
 in terms of the holonomy of abelian connections. The author learnt this interpretation from  \cite[Section 3]{LM}. 


\subsection{Weierstrass elliptic functions}
\label{wei}

In what follows we shall need  some basic and well known properties of the Weierstrass elliptic functions. These functions depend on a choice of lattice \[\Lambda=\bZ\omega_1\oplus \bZ\omega_2\subset \bC.\] We assume the generators $\omega_i$ are ordered so that $\Im(\omega_2/\omega_1)>0$. Proofs of the following claims can all be found for example in \cite[Chapter 20]{WW}, although the reader should note that the generators of $\Lambda$ are denoted there by $2\omega_i$.

The Weierstrass $\wp$-function
is a meromorphic function of $u\in \bC$ with double poles at each lattice point  $\omega\in \Lambda$. It is even and doubly-periodic
\[ \wp(-u)=\wp(u), \qquad \wp(u+\omega_i)=\wp(u),\]
and satisfies the differential equation
\[\wp'(u)^2=4\wp(u)-g_2(\Lambda) \wp(u)-g_3(\Lambda),\]
where $g_2(\Lambda), g_3(\Lambda)\in \bC$ are constants depending on the lattice $\Lambda$. 

The Weierstrass $\zeta$-function is
uniquely characterised by the properties \begin{equation}
\label{ozone}\zeta'(u) =-\wp(u), \qquad \zeta(-u)=-\zeta(u).\end{equation}
It has simple poles at the lattice points.
This function is not quite periodic but satisfies
\begin{equation} \label{oztwo}\zeta(u+\omega_i)-\zeta(u)= \eta_i,\end{equation}
where the quasi-periods $\eta_1,\eta_2\in \bC^*$ satisfy the Legendre relation
\begin{equation}
\label{leg}
\omega_2\eta_1-\omega_1 \eta_2 = 2\pi i.\end{equation}

There is an addition formula
\begin{equation}
\label{holiday}\zeta(u-v)-\zeta(u)+\zeta(v)=\frac{\wp'(u)+\wp'(v)}{2(\wp(u)-\wp(v))}.\end{equation}

Finally, the Weierstrass $\sigma$-function is uniquely characterised by the relations
\[\frac{d}{du} \log \sigma(u)=\zeta(u),\qquad \lim_{u\to 0} \bigg(\frac{\sigma(u)}{u}\bigg)=1.\]
It has the quasi-periodicity property
\begin{equation}\label{qup}\sigma(u+\omega_i)=-\exp\big(\eta_i (u+\half \omega_i)\big)\cdot \sigma(u),\end{equation}
and has simple poles at the lattice points $\omega\in \Lambda$.

\subsection{Period map}
\label{sectz}

Recall from the introduction the family of elliptic curves $X_s$ parameterised by the points $s\in S$. They are the projectivizations of the affine cubics
\[X_{s}^\circ=\big\{(x,y)\in \bC^2: y^2=x^3+ax+b\big\}.\]
As before we set $\Gamma_s=H_1(X_s,\bZ)$, and  denote by
\[\<-,-\>\colon \Gamma_s\times \Gamma_s\to \bZ\]
the skew-symmetric intersection form. 
We also consider the vector bundle $\pi \colon T\to S$  with fibres
\[T_s=H^1(X_s,\bC)= \Hom_\bZ(\Gamma_s,\bC)\isom \bC^2.\]
The Gauss-Manin connection defines a flat connection on this bundle.
There is a holomorphic section $Z\colon S\to T$ 
defined by sending a class $\gamma\in \Gamma_s$ to
\[Z(s)(\gamma)=\int_{\gamma} \sqrt{Q_0(x)}\, dx=\int_{\gamma} y \, dx \in \bC,\]
which we call the period map. We claim that the covariant derivative of  $Z$  defines an isomorphism
\[\nabla(Z)\colon \cT_S \to T,\]
between the holomorphic tangent bundle of $S$  and the bundle $T$.

Let us express all this in co-ordinates.  For this purpose, fix a base-point $s_0\in S$, and choose a basis
\[ \Gamma_{s_0}=\bZ\gamma_1\oplus \bZ\gamma_2\]
satisfying $\<\gamma_1,\gamma_2\>=1$. Extend this  basis to nearby fibres $\Gamma_s$ using the Gauss-Manin connection. We obtain a local trivialization of the bundle $\pi \colon T\to S$
\begin{equation}
\label{ham}\big(\theta\colon \Gamma_s\to \bC\big)\in T_s \mapsto (\theta_1,\theta_2)=\big(\theta(\gamma_1),\theta(\gamma_2)\big)\in \bC^2,\end{equation}
 and the section $Z$ becomes a pair of functions  on $S$
\begin{equation}
\label{zzz}z_i=  \int_{\gamma_i} \sqrt{x^3+ax+b} \cdot dx.\end{equation}
The claim is equivalent to the statement that these functions form a local system of co-ordinates on $S$. We check this by direct calculation in Lemma \ref{pension} below.

\subsection{Formula for the period map}

For each point $s\in S$,  we equip the elliptic curve $X_s$ with the global holomorphic one-form $\Omega$ which extends the form $dx/2y$ on the affine piece $X_s^\circ$. The periods of this form 
\[\omega_i=\int_{\gamma_i} \Omega=\int_{\gamma_i} \frac{dx}{2y} \in \bC^*\]
span a lattice $\Lambda_s=\bZ\omega_1\oplus \bZ\omega_2\subset \bC$. The condition $\<\gamma_1,\gamma_2\>=1$ ensures that $\Im(\omega_2/\omega_1)>0$. The corresponding Weierstrass $\wp$-function defines a  map
\[\bC\setminus \Lambda_s\to X_s^\circ, \qquad u\mapsto (x,y)=\big(\wp(u), \half \wp'(u)\big),\]
which  extends to an isomorphism of complex manifolds
\begin{equation}
\label{param}\bC/\Lambda_s\isom X_s.\end{equation}
Under this identification we have $\Omega=du$.

\begin{lemma}
\label{pension}
The functions $(z_1,z_2)$  give  local co-ordinates on $S$. There  are equalities of tangent vectors on $S$
\begin{equation}
\label{pens1}\frac{\partial}{\partial a}=-\eta_1\frac{\partial}{\partial z_1} -\eta_2\frac{\partial}{\partial z_2},\qquad \frac{\partial}{\partial b}=\omega_1\frac{\partial}{\partial z_1} +\omega_2\frac{\partial}{\partial z_2}.\end{equation}
\begin{equation}
\label{pens2}2\pi i\cdot\frac{\partial}{\partial z_1}=-\omega_2\frac{\partial}{\partial a} -\eta_2\frac{\partial}{\partial b},\qquad 2\pi i\cdot \frac{\partial}{\partial z_2}=\omega_1\frac{\partial}{\partial a} +\eta_1\frac{\partial}{\partial b},\end{equation}
where $\eta_1,\eta_2$ denote the quasi-periods of the Weierstrass $\zeta$-function associated to the lattice $\Lambda_s$.
\end{lemma}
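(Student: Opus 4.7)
The strategy is to compute the Jacobian of the map $(a,b)\mapsto (z_1,z_2)$ directly, show it is invertible using the Legendre relation, and then read off the two sets of identities by the chain rule and matrix inversion.

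First I would differentiate under the integral sign. From the formula $z_i = \int_{\gamma_i} \sqrt{x^3+ax+b}\,dx$, we get immediately
\[\frac{\partial z_i}{\partial b} = \int_{\gamma_i} \frac{dx}{2y} = \omega_i, \qquad \frac{\partial z_i}{\partial a} = \int_{\gamma_i} \frac{x\,dx}{2y}.\]
Here I am using that $\gamma_i$ is parallel-transported via the Gauss--Manin connection, so differentiating in $(a,b)$ commutes with integrating over $\gamma_i$. To identify the second integral, use the Weierstrass parametrization \eqref{param}: under $x=\wp(u)$, $y=\tfrac12\wp'(u)$, we have $dx/(2y) = du$ and hence $x\,dx/(2y)=\wp(u)\,du = -\zeta'(u)\,du$. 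Lifting $\gamma_i$ to the translation $u\mapsto u+\omega_i$ in the universal cover and applying the quasi-periodicity \eqref{oztwo} gives
\[\frac{\partial z_i}{\partial a} = -\bigl(\zeta(u_0+\omega_i)-\zeta(u_0)\bigr) = -\eta_i.\]

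Next I would assemble the Jacobian and invoke the Legendre relation \eqref{leg}:
\[\det\begin{pmatrix}-\eta_1 & \omega_1\\ -\eta_2 & \omega_2\end{pmatrix} = -\eta_1\omega_2+\omega_1\eta_2 = -2\pi i \neq 0.\]
This establishes that $(z_1,z_2)$ are local co-ordinates on $S$. The chain rule then yields \eqref{pens1} verbatim, and inverting the above $2\times 2$ matrix (dividing by $-2\pi i$) yields \eqref{pens2}.

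There is really no serious obstacle: the only nontrivial input is the identification of $\int_{\gamma_i} x\,dx/(2y)$ with $-\eta_i$ via the Weierstrass $\zeta$-function, and the non-vanishing of the Jacobian which is exactly the Legendre relation. Both are standard facts recalled in Section~\ref{wei}.
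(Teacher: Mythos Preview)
Your proposal is correct and follows essentially the same approach as the paper: differentiate $z_i$ under the integral sign, identify the resulting integrals with $\omega_i$ and $-\eta_i$ via the Weierstrass parametrization, and invert the Jacobian using the Legendre relation. The paper's proof is slightly terser in the identification $\int_{\gamma_i}\wp(u)\,du=-\eta_i$, but the content is identical.
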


\begin{proof}
Differentiating \eqref{zzz} gives\begin{equation}
\label{blob1}\frac{\partial z_i}{\partial a} =  \int_{\gamma_i} \frac{x\, dx}{2\sqrt{x^3+ax+b}}=\int_{\gamma_i} \frac{x\, dx}{2y}=\int_{\gamma_i} \wp(u) du = -\eta_i,\end{equation}
\begin{equation}
\label{blob2}\frac{\partial z_i}{\partial b} =  \int_{\gamma_i} \frac{dx}{2\sqrt{x^3+ax+b}}=\int_{\gamma_i} \frac{dx}{2y}=\int_{\gamma_i}  du = \omega_i,\end{equation}
and hence the relations \eqref{pens1}.   Inverting these using the Legendre relation \eqref{leg}  gives  \eqref{pens2}.
\end{proof}

\subsection{Abelian holonomy map}
\label{defab}

Consider a point $(a,b,q,p,r)\in \MM$ and set $s=(a,b)\in S$. We denote by $w=(q,p)$ the corresponding point of the elliptic curve $X_s$. 
Using the parameterization \eqref{param} of $X_s$ we can write
\begin{equation}
\label{tired}
w=(q,p)=\big(\wp(v), \half \wp'(v)\big).\end{equation}
for some point $v\in \bC+\Lambda_s$. Let us denote by $\infty\in X_s$ the point at infinity on the elliptic curve $X_s$. In terms of the parameterization \eqref{param} this   corresponds to $0+\Lambda_s$.  Let us introduce the meromorphic differential on $X_s$
\begin{equation}\label{b}\varpi(u) du=-\bigg(\frac{y+p}{x-q}+r\bigg)\frac{dx}{2y}=-\bigg(\frac{\wp'(u)+\wp'(v)}{2(\wp(u)-\wp(v))} +r\bigg)du.\end{equation}
A simple calculation  shows that $\varpi(u) du$  has simple poles at the points $\infty$ and $w$, with residues $+1$ and $-1$ respectively, and no other poles.

 Consider  the degree zero line bundle $L=\O_{X_s}(w-\infty)$ on $X_s$.  In terms of the parameterization \eqref{param}, the sections of $L$ over an open subset   are meromorphic functions $f(u)$ having zeroes at the points $u\in \Lambda_s$, and at worst  simple poles at the points $u\in v+\Lambda_s$. Note that for any such function $f(u)$, the function $f'(u)-\varpi(u)f(u)$ has the same property. It follows that  the formula
 \[\nabla=d- \varpi(u) du\]
 defines a holomorphic connection on $L$. Computing the flat sections of $\nabla$ shows that the holonomy of this connection  about a loop $\gamma$  in $X_s$ is given by multiplication by the expression \begin{equation}
\label{jk}\xi(\gamma)=\exp \bigg(\int_{\gamma} \varpi(u) du\bigg)\in \bC^*.\end{equation}

Consider now the moduli space $\cM_s$ of pairs $(L,\nabla)$ consisting of a line bundle $L$ on the curve $X_s$, equipped with a holomorphic  connection $\nabla$. Then $\cM_s$ is an affine bundle over the space of degree zero line bundles $\Pic^0(X_s)$ modelled on the vector space $\bC=H^0(X_s,\omega_{X_s})$.    The Riemann-Roch theorem shows that the line bundles $\O_{X_s}(w-\infty)$ for different points $w\in X_s$ are all distinct, and that all degree 0 line bundles on $X_s$ are of this form. Since these line bundles  have only trivial automorphisms,   the pairs $(L,\nabla)$ defined by different points $(q,p,r)$ of the fibre $\MM_s=\pi^{-1}(s)\subset \MM$ are all non-isomorphic. It follows that the  map   \begin{equation}
\label{ky}A_s\colon \MM_s\to \cM_s, \qquad A\colon (q,p,r)\mapsto (L,\nabla)=\big(\O_{X_s}(w-\infty),d-\varpi(u)du\big),\end{equation}
is an open embedding. The condition $p\neq 0$ on the points of $\MM$ translates into the statement that the associated line bundle $L=\O_{X_s}(w-\infty)$ is not a spin bundle, that is, it does not satisfy $L^2\isom \O_X$. The image of the embedding $A_s$ is therefore precisely the  set of pairs $(L,\nabla)$ for which the bundle $L$ is non-spin. 

For each point $s\in S$, the abelian Riemann-Hilbert correspondence shows that taking holonomy defines an isomorphism of complex manifolds $\operatorname{Hol}\colon \cM_s\to \bT_s$. Pre-composing with the open embedding $A_s\colon \MM_s\into \cM_s$ defines an open embedding $\Theta_s\colon \MM_s\into \bT_s$ which sends a point $(q,p,r)\in \MM_s$ to the holonomy  \eqref{jk} of the  pair $(L,\nabla)$ appearing in \eqref{ky}. 
Let us consider, as in the introduction, the bundle $\pi \colon \bT\to S$ whose fibres are the cohomology groups
\[\bT_s=H^1(\bT_s,\bC^*)=\Hom_\bZ(\Gamma_s,\bC^*)\isom (\bC^*)^2.\]
Then, taking the union of the maps $\Theta_s$   defines an open embedding $\Theta$, which  fits into the diagram
\begin{equation*}
\xymatrix@C=1.5em{
\MM \ar^{\Theta}[rr] \ar_{\pi}[dr] && \bT\ar^{\pi}[dl] \\
&S
} \end{equation*}
and induces the open embeddings $\Theta_s\colon \MM_s\into \bT_s$ on the fibres.
We call this map $\Theta$ the abelian holonomy map. 
%
%

\subsection{Explicit formula}
\label{formz}

The bundle of tori $\pi \colon \bT\to S$  is the quotient of the vector bundle $\pi\colon T\to S$ by the local system of lattices
\begin{equation}
\label{lattices}\Gamma_s^\vee=\Hom_\bZ(\Gamma_s,\bZ)\subset T_s.\end{equation}
Choosing a covariantly constant basis for the lattices $\Gamma_s$ as in Section \ref{sectz} gives a local trivialisation
\[\big(\xi\colon \Gamma_s\to \bC^*\big)\in \bT_s \mapsto (\xi_1,\xi_2)=\big(\xi(\gamma_1),\xi(\gamma_2)\big)\in (\bC^*)^2.\]
The quotient map $p\colon T\to \bT$ is expressed in co-ordinates by writing $\xi_i=\exp(\theta_i)$. Thus the pair $(\theta_1,\theta_2)$ of \eqref{ham} can also be viewed as local co-ordinates on the bundle $\bT$.

On the space $\MM$ we can take local co-ordinates $(a,b,q,r)$. We can also express the co-ordinate $q$ in terms of  $v$  using the parameterization \eqref{tired} as before. Of course the  Weierstrass function $\wp(v)$  depends implicitly on the lattice $\Lambda_s$, and hence  on the variables $(a,b)$.

\begin{lemma}
\label{rain}
In the above co-ordinates the abelian holonomy map $\Theta$ is given by 
\begin{equation}
\label{exp}\xi_i=\exp\big( \eta_i v-r \omega_i-\omega_i \zeta(v)\big),\end{equation}
where $\zeta(v)$ denotes the Weierstrass zeta-function for the lattice $\Lambda_s$.
\end{lemma}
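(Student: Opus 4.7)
The strategy is to rewrite $\varpi(u)\,du$ entirely in terms of Weierstrass $\zeta$-values using the addition formula \eqref{holiday}, and then to integrate using $\log\sigma$ as antiderivative, exploiting the quasi-periodicity \eqref{qup} to obtain the boundary terms $\eta_i v$.

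\medskip

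\noindent\textbf{Step 1: rewrite $\varpi$.} Applying the identity \eqref{holiday} to the expression \eqref{b} gives
\begin{equation*}
\varpi(u) = -\bigl[\zeta(u-v)-\zeta(u)+\zeta(v)\bigr] - r = \zeta(u)-\zeta(u-v)-\zeta(v)-r.
\end{equation*}
This form makes the poles of $\varpi\,du$ manifest (simple poles at $u\in\Lambda_s$ with residue $+1$ and at $u\in v+\Lambda_s$ with residue $-1$, matching the claim in Section \ref{defab}) and provides an antiderivative built from $\log\sigma$.

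\medskip

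\noindent\textbf{Step 2: integrate over a cycle.} Under the uniformisation \eqref{param}, a loop on $X_s$ representing the class $\gamma_i$ lifts to a path in $\bC$ from some generic base-point $u_0$ to $u_0+\omega_i$ which avoids $\Lambda_s$ and $v+\Lambda_s$. Since $(\log\sigma)'=\zeta$, we have
\begin{equation*}
\int_{u_0}^{u_0+\omega_i}\bigl[\zeta(u)-\zeta(u-v)\bigr]\,du = \bigl[\log\sigma(u)-\log\sigma(u-v)\bigr]_{u_0}^{u_0+\omega_i},
\end{equation*}
while the constants $-\zeta(v)-r$ contribute $-(\zeta(v)+r)\omega_i$.

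\medskip

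\noindent\textbf{Step 3: apply quasi-periodicity.} The quasi-periodicity relation \eqref{qup} gives
\begin{equation*}
\log\sigma(u+\omega_i)-\log\sigma(u) \equiv \eta_i\bigl(u+\tfrac{1}{2}\omega_i\bigr) + i\pi \pmod{2\pi i\bZ},
\end{equation*}
and applying this with $u=u_0$ and $u=u_0-v$ and subtracting, the $u_0$-dependence together with the $\tfrac{1}{2}\omega_i$ shifts and the $i\pi$ terms all cancel, leaving $\eta_i v$ modulo $2\pi i\bZ$. Combining,
\begin{equation*}
\int_{\gamma_i}\varpi(u)\,du \equiv \eta_i v - r\omega_i - \omega_i\zeta(v) \pmod{2\pi i\bZ},
\end{equation*}
and exponentiating yields the formula \eqref{exp}. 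The $2\pi i\bZ$ ambiguity is of course invisible in the exponential, so the only real care needed is to verify that the constants from the quasi-periodicity of $\sigma$ cancel on subtraction; this is the mild bookkeeping step but presents no genuine obstacle.
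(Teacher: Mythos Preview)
Your proof is correct and follows essentially the same approach as the paper: rewrite $\varpi(u)$ via the addition formula \eqref{holiday} as $\zeta(u)-\zeta(u-v)-\zeta(v)-r$, integrate using $\log\sigma$ as antiderivative, and evaluate the boundary terms via the quasi-periodicity \eqref{qup}. The paper compresses this into a single displayed line, but the computation is identical to yours.
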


\begin{proof}
 This is a direct computation which the author learnt from  \cite[Section 3]{LM}:
\[\theta_i=\log(\xi_i)=\int_{\gamma_i} \varpi(u) du = -\bigg[\log \frac{\sigma(u-v)}{\sigma(u)} + u\zeta(v)+ur\bigg]_{\gamma_i}=\eta_i v- \omega_i(\zeta(v)+r),\]
where we used the addition formula \eqref{holiday}, and the quasi-periodicity property \eqref{qup}. Note that by construction the differential $\varpi(u) \, du$ has simple poles with integer residues,  so the expression for $\theta_i$ is well-defined up to multiples of $2\pi i$, and the quantity $\xi_i=\exp(\theta_i)$ is therefore well-defined.
 \end{proof}

It will be convenient in what follows to introduce alternative local co-ordinates $(\theta_a,\theta_b)$ on the torus $\bT_s$ by setting
\begin{equation}
\label{hide}\theta_i=-\eta_i\theta_a+\omega_i \theta_b.\end{equation}
Using the Legendre relation \eqref{leg}, the  inverse transformation is
\begin{equation}
\label{hide2} 2\pi i \cdot \theta_a=-\omega_2\theta_1+\omega_1\theta_2, \qquad 2\pi i \cdot  \theta_b=-\eta_2\theta_1+\eta_1\theta_2.\end{equation}
In these co-ordinates \eqref{exp} takes the simple form
\begin{equation}\label{mod}\theta_a=-v=-\frac{1}{4}\int^{(q,p)}_{(q,-p)} \frac{dx}{y}, \qquad \theta_b=-\zeta(v)-r=\frac{1}{4}\int^{(q,p)}_{(q,-p)} \frac{x dx}{y}-r.\end{equation}
It is easy to see that the integrals in \eqref{mod} are well-defined providing we  take an integration path which is invariant under the covering involution of $p\colon X_s\to \bP^1$ defined by $(x,+y)\leftrightarrow (x,-y)$.




\subsection{Further remarks}

We record here a few further comments on the  abelian holonomy map which will be useful later.

\begin{remark}
\label{defined}
It follows from the discussion in Section \ref{defab} that the complement of the image of  the embedding $\Theta_s\colon \MM_s\into \bT_s$ consists precisely of the holonomy of  holomorphic connections on the four spin bundles on $X_s$. These correspond to the half-lattice points
\[\Big\{0,\half \omega_1, \half \omega_2, \half(\omega_1+\omega_2)\Big\}\in v+\Lambda_s.\]
 Direct calculations shows that the resulting points of $\bT_s$ have co-ordinates $\xi_i=\pm \exp({r\omega_i})$, for some $r\in \bC$, with the four possible choices of pairs of signs corresponding to the four spin bundles.  For the non-trivial spin bundles this follows from \eqref{exp} using the Legendre relation \eqref{leg} and the identities
 \[\zeta\big(\half\omega_1\big)=\half\eta_1, \qquad \zeta\big(\half\omega_2\big)=\half\eta_2, \qquad \zeta\Big(\half(\omega_1+\omega_2)\Big)=\half(\eta_1+\eta_2),\]
 which are easily derived from \eqref{ozone}-\eqref{oztwo}. On the other hand, a holomorphic connection on the trivial bundle $\O_{X_s}$ takes the form $d-r\, du$, where $d$ denotes the trivial connection. The holonomy around the cycles $\gamma_i\in \Gamma_s$ is then given by multiplication by $\xi_i=\exp({r\omega_i})$.
\end{remark}

\begin{remark}
In the introduction we  defined the map $\Theta$ by an expression
\begin{equation}
\label{duff}\xi(\gamma)=\exp\bigg(\int_{\gamma}\frac{-Q_1(x)\, dx}{2\sqrt{Q_0(x)}}\bigg).\end{equation}
The meromorphic differential on $X_s$ being integrated here
\begin{equation}
\label{a}-\frac{Q_1(x)\, dx}{2\sqrt{Q_0(x)}}=-\bigg(\frac{p}{x-q} + r\bigg) \frac{dx}{2y}=-\bigg(\frac{\wp'(v)}{2(\wp(u)-\wp(v))} + r\bigg) du,\end{equation}
has simple poles at the points $\pm v+\Lambda_s$ with residues $\mp \half$.    It follows that the integral of \eqref{a} against any homology class is well-defined only up to integer multiples of $\pi i $, and that the exponential \eqref{duff} is therefore only well-defined up to sign. The difference between  \eqref{a} and \eqref{b} is given by
the form \[\frac{dx}{2(x-q)}=\frac{\wp'(u)\, du}{2(\wp(u)-\wp(v))}.\]Since this differential is pulled back from $\bP^1$ via the double cover  $p\colon X_s\to \bP^1$,  its integral around any cycle (which is only well-defined up to integer multiples of $\pi i$) must in fact be an integer multiple of $\pi i$. Thus the expressions \eqref{jk} and \eqref{duff}  agree up to sign.
\end{remark}

\begin{remark}
\label{actions}
There are two group actions on the space $M$ which will be important later, and which are respected by the   abelian holonomy map.
\begin{itemize}
\item[(a)] There are  involutions of the spaces $M$ and $\bT$ defined in local co-ordinates  by \[(a,b,q,p,r)\leftrightarrow (a,b,q,-p,-r), \qquad (z_1,z_2,\theta_1,\theta_2)\leftrightarrow (z_1,z_2,-\theta_1,-\theta_2).\]
It follows from \eqref{mod} and \eqref{hide} that these are intertwined by the map $\Theta$.

\item[(b)] Consider the action of  $\bC^*$ on the space $M$ for which the co-ordinates $(a,b,q,p,r)$ are homogeneous of weights $(4,6,2,3,1)$ respectively. Rescaling also the co-ordinate $x$ on $\bC\subset \bP^1$  with weight $2$, the formula \eqref{zzz} shows that the co-ordinates $(z_1,z_2)$ have weight $5$, and formulae \eqref{b}-\eqref{jk} that  the co-ordinates $(\theta_1,\theta_2)$ have weight 0.  The formulae \eqref{blob1}-\eqref{blob2} then show that $(\omega_i,\eta_i)$ have weight $(-1,1)$ respectively, and thus by \eqref{hide2} the co-ordinates  $(\theta_a,\theta_b)$ have weights $(-1,1)$.
\end{itemize}
\end{remark}

We shall need the following formula for the derivative of the map $\Theta$.

\begin{lemma}
\label{derder}
The derivative of the abelian holonomy map with respect to the local co-ordinates $(a,b,q,r)$ on $\MM$ and $(a,b,\theta_a,\theta_b)$ on $\bT$ is given by
\begin{equation}
\label{froo}\Phi_*\bigg(\frac{\partial}{\partial q}\bigg)= -\frac{1}{2p} \frac{\partial}{\partial \theta_a}+\frac{q}{2p}\frac{\partial}{\partial \theta_b}, \qquad \Phi_*\bigg(\frac{\partial}{\partial r}\bigg)= -\frac{\partial}{\partial \theta_b},\end{equation}
\begin{equation}\Phi_*\bigg(\frac{\partial}{\partial a}\bigg)= \frac{\partial}{\partial a}+\kappa_1(v) \frac{\partial}{\partial \theta_a}-\kappa_2(v) \frac{\partial}{\partial \theta_b}, \qquad \Phi_*\bigg(\frac{\partial}{\partial b}\bigg)= \frac{\partial}{\partial b}+\kappa_0(v) \frac{\partial}{\partial \theta_a}- \kappa_1(v) \frac{\partial}{\partial \theta_b},\end{equation}
where we introduced
the functions
\begin{equation}
\label{newone}\kappa_i(v)=\frac{1}{8}\int^{(q,p)}_{(q,-p)}\frac{x^i dx}{y^3}=\int_{-v}^v \frac{\wp(u)^i\, du}{\wp'(u)^2}, \qquad i=0,1,2.\end{equation}
\end{lemma}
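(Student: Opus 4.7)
The plan is to extract everything from the integral representation \eqref{mod}, which reads
\[\theta_a \;=\; -\tfrac{1}{4}\int_{(q,-p)}^{(q,p)}\frac{dx}{y},\qquad \theta_b+r \;=\; \tfrac{1}{4}\int_{(q,-p)}^{(q,p)}\frac{x\,dx}{y},\]
where $y=\sqrt{x^3+ax+b}$ and integration is along a path on $X_s$ invariant under the hyperelliptic involution. Since $\Theta$ covers the identity on $S$, each push-forward $\Phi_*(\partial/\partial u)$ equals the coordinate vector $\partial/\partial u$ on $\bT$ (this piece vanishing when $u\in\{q,r\}$) plus the vertical correction $(\partial\theta_a/\partial u)\,\partial/\partial\theta_a + (\partial\theta_b/\partial u)\,\partial/\partial\theta_b$. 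The proof therefore reduces to computing these four partial derivatives.

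I would begin with the easy case, $\partial/\partial q$ and $\partial/\partial r$ at fixed $(a,b)$. Both integrands carry no explicit $q$-dependence, so only the boundary terms survive. The $x$-coordinate of each endpoint equals $q$ and moves at unit rate, while $1/y$ and $x/y$ evaluate to $\pm 1/p$ and $\pm q/p$ respectively at $(q,\pm p)$. The fundamental theorem then yields $\partial\theta_a/\partial q=-1/(2p)$ and $\partial\theta_b/\partial q=q/(2p)$. The $r$-derivative picks up only the explicit $-r$ in $\theta_b$, giving $\partial\theta_b/\partial r=-1$ and $\partial\theta_a/\partial r=0$.

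Next, for $\partial/\partial a$ and $\partial/\partial b$ at fixed $(q,r)$, the crucial observation is that the $x$-coordinate of each endpoint is $q$, which is held fixed; the contour in the $x$-plane therefore has stationary endpoints, and no boundary term appears. Differentiating under the integral sign via
\[\tfrac{\partial}{\partial a}(1/y)=-\tfrac{x}{2y^3},\quad \tfrac{\partial}{\partial b}(1/y)=-\tfrac{1}{2y^3},\quad \tfrac{\partial}{\partial a}(x/y)=-\tfrac{x^2}{2y^3},\quad \tfrac{\partial}{\partial b}(x/y)=-\tfrac{x}{2y^3},\]
together with the elementary identity $dx/y^3 = 8\,du/\wp'(u)^2$, the resulting four integrals are (up to sign) the functions $\kappa_i(v)$ of \eqref{newone}, and comparison produces the stated formulas.

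The main point requiring care is the justification of differentiation under the integral sign as $(a,b)$ varies, since the Riemann surface $X_s$ and its branch points $e_i(a,b)$ move in the family. Provided the contour is kept uniformly bounded away from the moving branch points---always possible by a small isotopy---the integrand is jointly analytic on the contour and the standard dominated-convergence argument legitimates the exchange. Once that is in place, the computation is completely routine.
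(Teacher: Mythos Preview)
Your argument is correct and is exactly the approach the paper takes: its proof consists of the single sentence ``The abelian holonomy map is given by the formulae \eqref{mod}, which can be viewed as integrals of multi-valued 1-forms on $\bP^1$. Differentiating these gives the result.'' You have simply spelled out the details that the paper leaves implicit, including the boundary-term computation for $\partial/\partial q$, the observation that the $x$-endpoints are stationary when varying $(a,b)$, and the justification for differentiating under the integral sign.
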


\begin{proof}
The abelian holonomy map is given by the formulae \eqref{mod}, which  can be viewed as integrals of multi-valued 1-forms on $\bP^1$. Differentiating these gives the result. \end{proof}

\begin{remark} As with \eqref{mod}, the integrals in \eqref{newone} are well-defined providing the path of integration is invariant under $(x,+y)\leftrightarrow (x,-y)$. This results in meromorphic functions which are uniquely defined by the properties
\[\kappa'_i(v)=\frac{2\wp(v)^i}{\wp'(v)^2}, \qquad \kappa_i(-v)=-\kappa_i(v).\]
By computing the derivative of  $(\alpha+\beta \wp(v)+\gamma\wp^2(v))/\wp'(v)$ as in \eqref{useful} below, and comparing constants, it is not hard to write $\kappa_i(v)$  explicitly in terms of  $\wp(v)$, $\zeta(v)$ and $v$. Since we will make no use of the resulting expressions,  we refrain from writing out the details.\end{remark}




\section{The isomonodromy connection}
\label{wwp}

In this section we combine the material from the previous sections to give proofs of Theorems \ref{one} and \ref{one2}. We first use the abelian holonomy map    to transfer  the pencil of isomonodromy connections  to the bundle $\pi\colon \bT\to S$. We  then write the transferred pencil of non-linear connections in the natural co-ordinate system  $(z_i,\theta_j)$. The resulting expressions show that  these connections define what is called a Joyce structure in \cite{RHDT2}. We then discuss the induced linear Joyce connection on $S$, and prove that its flat co-ordinates are $(a,b)$.

\subsection{Rewriting the isomonodromy flow}
We proved in the last section that the abelian holonomy map  $\Theta\colon \MM\into \bT$  is an open embedding, commuting with the projections to $S$.  We can therefore use it  to push-forward  the  isomonodromy connection of Proposition \ref{isom}.
The following result gives a Hamiltonian description of the resulting meromorphic Ehresmann connection. 

\begin{thm}
\label{hi}
The push-forward of the isomonodromy connection along the open embedding $\Theta\colon M\into \bT$  is spanned by vector fields of the form
\begin{equation}\label{zz1}
\frac{\partial}{\partial a} +\frac{1}{\hbar} \cdot \frac{\partial}{\partial \theta_a} +\frac{1}{2\pi i}\cdot \frac{\partial^2 K}{\partial \theta_a \partial \theta_b}\cdot \frac{\partial}{\partial \theta_a}-\frac{1}{2\pi i}\cdot\frac{\partial^2 K}{\partial \theta_a \partial \theta_a }\cdot \frac{\partial}{\partial \theta_b},\end{equation}\begin{equation}\label{zz2}
\frac{\partial}{\partial b} + \frac{1}{\hbar}  \cdot  \frac{\partial}{\partial \theta_b} +\frac{1}{2\pi i}\cdot\frac{\partial^2 K}{\partial \theta_b \partial \theta_b }\cdot \frac{\partial}{\partial \theta_a}-\frac{1}{2\pi i}\cdot\frac{\partial^2 K}{\partial \theta_a \partial \theta_b }\cdot \frac{\partial}{\partial \theta_b}, \end{equation}
with  $K$ a holomorphic  function defined on the image of $\Theta$.\end{thm}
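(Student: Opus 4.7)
The plan is to push the two isomonodromy flows of Proposition \ref{isom} forward along $\Theta$ using the derivative formulas in Lemma \ref{derder}, verify the integrability conditions needed for a common Hamiltonian potential to exist, and then construct $K$ by integration.

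First I would use $p^2 = q^3 + aq + b$ to adopt $(a,b,q,r)$ as coordinates on $\MM$ and rewrite the flows $v_1$ and $v_2$ given in \eqref{fir} and \eqref{sec} in these coordinates. A direct rewriting shows that the horizontal lifts of $\partial/\partial a$ and $\partial/\partial b$ for the isomonodromy connection are $v_2 + q v_1$ and $v_1$ respectively. Applying $\Theta_*$ via Lemma \ref{derder} yields
\[\Theta_*(v_1) = \frac{\partial}{\partial b} + \frac{1}{\hbar}\frac{\partial}{\partial \theta_b} + B_{ba}\,\frac{\partial}{\partial \theta_a} + B_{bb}\,\frac{\partial}{\partial \theta_b},\]
\[\Theta_*(v_2+qv_1) = \frac{\partial}{\partial a} + \frac{1}{\hbar}\frac{\partial}{\partial \theta_a} + A_{aa}\,\frac{\partial}{\partial \theta_a} + A_{ab}\,\frac{\partial}{\partial \theta_b},\]
with $A_{aa} = \kappa_1(v) + r/(2p^2)$, $B_{bb} = -A_{aa}$, $B_{ba} = \kappa_0(v)$, and an analogous explicit expression for $A_{ab}$ in terms of $\kappa_2(v)$, $q, p, r$. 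In particular the trace condition $A_{aa}+B_{bb}=0$ is immediate.

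The theorem then reduces to exhibiting a holomorphic function $K$ on the image of $\Theta$ whose Hessian entries satisfy $\partial^2 K/\partial\theta_a\partial\theta_b = 2\pi i\, A_{aa}$, $\partial^2 K/\partial\theta_a^2 = -2\pi i\, A_{ab}$, and $\partial^2 K/\partial\theta_b^2 = 2\pi i\, B_{ba}$. Beyond the trace condition, this demands the two compatibilities $\partial B_{ba}/\partial\theta_a = \partial A_{aa}/\partial\theta_b$ and $\partial A_{aa}/\partial\theta_a + \partial A_{ab}/\partial\theta_b = 0$. I would verify these by converting $\partial/\partial\theta_a$ and $\partial/\partial\theta_b$ to $\partial/\partial v$ and $\partial/\partial r$ via the Jacobian of \eqref{mod} (explicitly $\partial/\partial \theta_a = -\partial/\partial v - q\,\partial/\partial r$ and $\partial/\partial \theta_b = -\partial/\partial r$), and using the defining property $\kappa'_i(v) = 2\wp(v)^i/\wp'(v)^2$ together with $\wp''(v) = 6q^2 + 2a$, which follows from differentiating $\wp'(v)^2 = 4q^3 + 4aq + 4b$. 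This reduces the integrability checks to elementary polynomial identities in $q, p, r$.

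Given the integrability conditions, $K$ is determined up to an affine function of $\theta_a, \theta_b$—which does not affect the flows—by successive integration in these coordinates, producing a meromorphic function of $(a,b,q,p,r)$ which is well-defined on the image of $\Theta$. The main obstacle will be the bookkeeping needed to verify the integrability conditions, since the relevant derivatives mix the transcendental $v$-dependence of the $\kappa_i(v)$ with the rational $r$-dependence. A cleaner conceptual route—should the direct calculation prove unwieldy—would exploit the natural holomorphic symplectic form on the fibres $\bT_s$ induced by the intersection pairing on $\Gamma_s$, which is preserved by the isomonodromy connection (as the underlying Painlev\'e Hamiltonian structure of Remark \ref{rem2} indicates), and deduce the existence of a common generating function $K$ directly from the commutativity of the pushed-forward flows.
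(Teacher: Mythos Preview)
Your proposal is correct and follows essentially the same route as the paper: push the flows forward via Lemma \ref{derder}, identify the fibre coefficients (your $A_{aa},A_{ab},B_{ba},B_{bb}$ are exactly the paper's $\mu,-\nu,\lambda,-\mu$), and then show these are the Hessian entries of a single function $K$. The only real difference is in the last step. The paper writes down an explicit candidate $K=J+C$, with $J$ given by the closed rational formula \eqref{j} and $C$ an explicit cubic in $(\theta_a,\theta_b)$, computes its third derivatives using \eqref{inv} and the identity \eqref{useful}, and checks they reproduce $\partial\lambda/\partial\theta_a$, $\partial\mu/\partial\theta_a$, etc.; the odd parity of Remark \ref{actions}(a) is then invoked to kill the residual constants of integration. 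You instead propose to verify the two mixed-partial compatibilities directly and then integrate. Both routes pass through the same derivative computations (your integrability checks are precisely the equalities recorded in \eqref{hol}), so there is no genuine divergence. The paper's explicit formula for $J$ is what makes its approach preferable here, since that formula is needed immediately afterwards for Theorem \ref{again} and Proposition \ref{porpoise}; your abstract existence argument suffices for Theorem \ref{hi} as stated but would leave those later results without their input.
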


\begin{proof}
Making a trivial linear combination of the flows of Proposition \ref{isom},  and leaving the variation of $p=p(a,b,q)$ implicit,  the isomonodromy connection is generated by the vector fields
\begin{equation}\label{da} -\frac{2p}{\hbar} \frac{\partial}{\partial q}
-\frac{q}{\hbar} \frac{\partial}{\partial r} +\bigg(\frac{\partial}{\partial a}-\frac{r}{p}\frac{\partial}{\partial q}
-\frac{r^2(3q^2+a)-qpr}{2p^3}  \frac{\partial}{\partial r}\bigg),\qquad -\frac{1}{\hbar}\frac{\partial}{\partial r}+\bigg(\frac{\partial}{\partial b}
 +\frac{r}{2p^2} \frac{\partial}{\partial r}\bigg).\end{equation}
Applying the derivative of  $\Theta$ computed in Lemma \ref{derder} these become
\[\frac{1}{\hbar} \frac{\partial}{\partial \theta_a} + \frac{\partial}{\partial a}+ \mu \frac{\partial}{\partial \theta_a}- \nu \frac{\partial}{\partial \theta_b},\qquad
 \frac{1}{\hbar} \frac{\partial}{\partial \theta_b} + \frac{\partial}{\partial b}+ \lambda \frac{\partial}{\partial \theta_a}- \mu \frac{\partial}{\partial \theta_b},\]
where the functions $\lambda,\mu,\nu$ are defined on  the image of  $\Theta$   by
\begin{equation}
\label{es}\Theta^*(\lambda)= \kappa_0(v), \qquad \Theta^*(\mu)=\kappa_1(v)+\frac{r}{2p^2}, \qquad \Theta^*(\nu)= \kappa_2(v)+\frac{qr}{p^2} -\frac{(3q^2+a)r^2}{2p^3}.\end{equation}
Note that the inverse to the derivative in Lemma \ref{derder} satisfies
\begin{equation}
\label{inv}\Theta_*^{-1}\Big(\frac{\partial}{\partial \theta_a}\Big)= -2p \frac{\partial}{\partial q}-q\frac{\partial}{\partial r}, \qquad \Theta_*^{-1}\Big(\frac{\partial}{\partial \theta_b}\Big)= -\frac{\partial}{\partial r}.\end{equation}

Define a holomorphic function $J$ on the image  of the open inclusion $\Theta\colon M\into \bT$ by
\begin{equation}
\label{j}\frac{1}{2\pi i} \cdot \Theta^*(J)=-\frac{1}{4\Delta p} \big( 2ap^2+3p(3b-2aq)r+(6aq^2-9bq+4a^2)r^2 - 2apr^3\big),\end{equation}
where we set $\Delta=4a^3+27b^2$. 
The defining relation $p^2=q^3+aq+b$ implies that
\begin{equation}
\label{useful} 2p\frac{d}{dq}\Big(\frac{\alpha q^2  +\beta q+\gamma }{p}\Big)=\alpha q-\beta+\frac{1}{p^2} \Big( (2a\alpha-3\gamma)q^2+ (2a\beta+3 b\alpha)q+(3b\beta-a\gamma)\Big).\end{equation}
A slightly painful caluclation using \eqref{inv} and the relation \eqref{useful}  repeatedly gives\[\frac{1}{2\pi i} \cdot \Theta^*\Big(\frac{\partial J}{\partial \theta_a}\Big)=\frac{1}{\Delta}\bigg( \frac{a^2}{2} +\frac{9bq}{4}- \frac{\big(9bq^2+2a^2q+6ab\big)r}{2p} +\frac{9br^2}{4}\bigg) -\frac{r^2}{4p^2},\]
\[\frac{1}{2\pi i} \cdot \Theta^*\Big(\frac{\partial^2 J}{\partial \theta_a^2}\Big)=\frac{-1}{\Delta}\bigg(\frac{-2a^2q^2+3abq+9b^2}{2p} + a^2r\bigg)+\frac{qr}{p^2} - \frac{(3q^2+a)r^2}{2p^3},\]
\[\frac{1}{2\pi i} \cdot \Theta^*\Big(\frac{\partial^3 J}{\partial \theta_a^3}\Big)=-\frac{3ab}{2\Delta}-\frac{3q^2}{2p^2}-\frac{2r}{p} + \frac{3q(3q^2+a)r}{p^3}+\frac{6q r^2}{p^2}-\frac{3(3q^2+a)^2r^2}{2p^4}.\]
On the other hand, applying the operators \eqref{inv}  to the expressions \eqref{es}, and noting the overlap with the previous calculation, we easily obtain\begin{equation}
\label{bol}\Theta^*\bigg(\frac{\partial \nu}{\partial \theta_a}\bigg)=-\frac{3q^2}{2p^2}- \frac{2r}{p}+\frac{3q(3q^2+a)r}{p^3}+\frac{6qr^2}{p^2}-\frac{3(3q^2+a)^2 r^2}{2p^4},\qquad \frac{\partial \lambda}{\partial \theta_b }=0,\end{equation}
\begin{equation}\label{hol}\Theta^*\bigg(\frac{\partial \mu}{\partial \theta_a }\bigg)=-\frac{q}{p^2}+\frac{(3q^2+a)r}{p^3}=\Theta^*\bigg(\frac{\partial \nu}{\partial \theta_b }\bigg),\qquad \Theta^*\bigg(\frac{\partial \lambda}{\partial \theta_a }\bigg)=-\frac{1}{2p^2}=\Theta^*\bigg(\frac{\partial \mu}{\partial \theta_b }\bigg).\end{equation}
Let us now define $K=J+C$, where
\begin{equation}
\label{c}\frac{1}{2\pi i} \cdot C=\frac{1}{4\Delta} \big(ab\theta_a^3-2a^2\theta_a^2\theta_b-9b\theta_a\theta_b^2 +2a\theta_b^3\big).\end{equation}
Comparing with \eqref{bol}-\eqref{hol} we see that
\begin{equation}
\label{pet}2\pi i \lambda=\frac{\partial^2 K}{\partial \theta_b^2}, \qquad 2\pi i \mu=\frac{\partial^2 K}{\partial \theta_a\partial \theta_b}, \qquad 2\pi i \nu=\frac{\partial^2 K}{\partial \theta_a^2},\end{equation}
up to the addition of functions independent of $\theta_a,\theta_b$. But these constants of integration must vanish because, by Remark \ref{actions}(a),  both sides of \eqref{pet} are odd functions  of the $\theta_a,\theta_b$ co-ordinates. 
\end{proof}


\subsection{Co-ordinate change}
\label{change}

To pass from the statement of Theorem \ref{hi} to that of Theorem \ref{one} we need to apply a change of variables. For this purpose, 
let us consider the following abstract problem. Suppose that $S$ is a  complex manifold equipped with a complex symplectic form $\omega$. Take a  local co-ordinate system  $(z_1,\cdots,z_n)$ on $S$ in which this form is constant, so that we can write
\begin{equation}
\label{funeral}\omega=\sum_{i,j} \omega_{ij} \cdot dz_i \wedge dz_j,\end{equation}
for some constant skew-symmetric matrix $\omega_{ij}$.  The induced Poisson bracket on $S$ is given in these co-ordinates by the inverse matrix
\begin{equation}
\label{mum}\{z_i,z_j\}=\epsilon_{ij}, \qquad \sum_j \epsilon_{ij}\cdot \omega_{jk}=\delta_{ik}.\end{equation}
There is a natural co-ordinate system $(z_1,\cdots,z_n,\theta_1,\cdots,\theta_n)$ on  the total space of the holomorphic tangent bundle $\cT_S$  obtained by writing a tangent vector in the form
$\sum_i \theta_i \cdot \frac{\partial}{\partial z_i}$.
 We are interested in   systems of flows on $\cT_S$ of the form
\begin{equation}
\label{flowj}
\frac{\partial}{\partial z_i}+ \frac{1}{\hbar} \frac{\partial}{\partial \theta_i} + \sum_{j,k}  \epsilon_{jk} \frac{\partial^2 J}{\partial \theta_i \partial \theta_j} \frac{\partial}{\partial \theta_k}.
\end{equation}
where $J\colon \cT_S\to \bC$ is some fixed holomorphic function.

Consider now some new co-ordinate system $(w_1,\cdots,w_n)$ on $S$ which is related to the first by a symplectomorphism, so that  $\omega$ takes the same form \eqref{funeral}. In the same way as before, we can consider the induced co-ordinates  $(w_1,\cdots, w_n,\phi_1,\cdots,\phi_n)$ on the tangent bundle $\cT_S$. Given a holomorphic function $K\colon \cT_S\to \bC$ we can then consider the system of flows
\begin{equation}
\label{flowk}
\frac{\partial}{\partial w_i}+ \frac{1}{\hbar} \frac{\partial}{\partial \phi_i} + \sum_{j,k}  \epsilon_{jk} \frac{\partial^2 K}{\partial \phi_i \partial \phi_j} \frac{\partial}{\partial \phi_k}.
\end{equation}
We would like to know when the two flows \eqref{flowj} and \eqref{flowk} on the space $\cT_S$ are equivalent, in the sense that they generate the same sub-bundle of the tangent bundle. 

Let $\nabla$ denotes the flat, torsion-free, linear connection on the tangent bundle $\cT_S$ whose flat co-ordinates are $(w_1,\cdots,w_n)$, and define
\begin{equation}
\label{coeffs}C_{pqr}(z)=\omega\bigg(\nabla_{\frac{\partial}{\partial z_p}} \Big(\frac{\partial}{\partial z_q}\Big),\frac{\partial}{\partial z_r}\bigg). \end{equation}
Note  that the expression \eqref{coeffs} is completely symmetric under permutation of the indices $p,q,r$. Indeed,  the assumption that the symplectic form $\omega$ is constant in the co-ordinates $z_i$ and $w_j$, and hence is preserved by  $\nabla$ gives
\[C_{pqr}(z)-C_{prq}(z)=\omega\bigg(\nabla_{\frac{\partial}{\partial z_p}} \Big(\frac{\partial}{\partial z_q}\Big),\frac{\partial}{\partial z_r}\bigg)+ \omega\bigg(\frac{\partial}{\partial z_q}, \nabla_{\frac{\partial}{\partial z_p}} \Big(\frac{\partial}{\partial z_r}\Big)\bigg)=\frac{\partial}{\partial z_p}\omega\Big( \frac{\partial}{\partial z_q},\frac{\partial}{\partial z_r}\Big)=0.\]
On the other hand, the fact that $\nabla$ is torsion-free gives $C_{pqr}(z)=C_{qpr}(z)$.

\begin{prop}
\label{fur}
The two flows \eqref{flowj} and \eqref{flowk}  define the same Ehresmann connection on the bundle $\pi \colon \cT_S\to S$ precisely if
\begin{equation}
\label{blud}J(z_i,\theta_j)=K(w_i,\phi_j)-\frac{1}{6}\cdot \sum_{p,q,r}C_{pqr}(z_i) \cdot \theta_p \theta_q \theta_r.\end{equation}
\end{prop}

\begin{proof}
Take a point in the total space $\cT_S$ with co-ordinates $(z_i,\theta_j)$ and $(w_i,\phi_j)$. Then
\[\sum_i \theta_i \frac{\partial}{\partial z_i}=\sum_j \phi_j \frac{\partial}{\partial w_j} \implies  \phi_j=\sum_i \theta_i \frac{\partial w_j}{\partial z_i}.\]
Changing co-ordinates on the space $\cT_S$ from $(z_i,\theta_i)$ to $(w_j,\phi_j)$ therefore gives  \begin{equation}
\label{agon}\frac{\partial}{\partial z_i} = \sum_j \frac{\partial w_j}{\partial z_i} \frac{\partial}{\partial w_j} +\sum_{j,k}\theta_k \frac{\partial^2 w_j}{\partial z_i \partial z_k}\frac{\partial}{\partial \phi_j}, \qquad \frac{\partial}{\partial \theta_i}=\sum_j  \frac{\partial w_j}{\partial z_i} \frac{\partial}{\partial \phi_j}.\end{equation} 
Consider the following linear combination of the flows \eqref{flowk}
\[\sum_p \frac{\partial w_p}{\partial z_i} \frac{\partial}{\partial w_p}+ \frac{1}{\hbar} \sum_p \frac{\partial w_p}{\partial z_i}\frac{\partial}{\partial \phi_p} + \sum_{p, j,k}  \epsilon_{jk} \frac{\partial^2 K}{\partial \phi_p \partial \phi_j} \frac{\partial w_p}{\partial z_i} \frac{\partial}{\partial \phi_k}\]
\begin{equation}
\label{dani}=\frac{\partial}{\partial z_i}-\sum_{j,k}\theta_k \frac{\partial^2 w_j}{\partial z_i \partial z_k}\frac{\partial}{\partial \phi_j}+\frac{1}{\hbar}\frac{\partial}{\partial \theta_i} +\sum_{j,k}  \epsilon_{jk} \frac{\partial^2 K}{\partial \theta_i \partial \theta_j} \frac{\partial}{\partial \theta_k},\end{equation}
where we used the assumption that the change of co-ordinates from $z_i$ to $w_j$ is symplectic, which, using the second relation of \eqref{agon}, implies that for any function $f\colon \cT_S\to \bC$ 
\[\sum_{j,k} \epsilon_{jk} \frac{\partial f}{\partial \theta_j}\frac{\partial }{\partial \theta_k}= \sum_{j,k} \epsilon_{jk} \frac{\partial f}{\partial \phi_j}\frac{\partial }{\partial \phi_k}.\]

The expressions \eqref{dani} agree with \eqref{flowj} provided that
\begin{equation}
\label{news}\sum_{j,k,q}  \epsilon_{jq}C_{ijk}(z) \theta_k\frac{\partial}{\partial \theta_q}= \sum_{j,k} \theta_k \frac{\partial^2 w_j}{\partial z_i \partial z_k} \frac{\partial }{\partial \phi_j} ,\end{equation}
for all indices $i$. 
But now we compute
\[C_{ijk}(z)=\omega\Bigg(\nabla_{\frac{\partial}{\partial z_i}} \Big(\frac{\partial}{\partial z_k}\Big), \frac{\partial}{\partial z_j}\Bigg)=\omega\Bigg(\sum_{p} \frac{\partial^2 w_p}{\partial z_i \partial z_k} \frac{\partial}{\partial w_p}, \frac{\partial}{\partial z_j}\Bigg)=\sum_{p,r} \omega_{rj} \frac{\partial^2 w_p}{\partial z_i \partial z_k}  \frac{\partial z_r}{\partial w_p},\]
and the identity \eqref{news} follows using the fact \eqref{mum} that $\epsilon$ and $\omega$ are inverse matrices.\end{proof}

\subsection{The Joyce function}

We can now use Proposition \ref{fur}  to rewrite the flows of Theorem \ref{hi} in the co-ordinates $(z_i,\theta_j)$. This leads to the following statement.

\begin{thm}
\label{again}
When written in the co-ordinates $(z_1,z_2,\theta_1,\theta_2)$, the push-forward of the isomonodromy flows \eqref{fir1}-\eqref{sec1} along the map $\Theta\colon \MM\to \bT$ take the Hamiltonian form
\begin{equation}
\label{wa}\frac{\partial}{\partial z_i} + \frac{1}{\hbar} \cdot \frac{\partial}{\partial \theta_i} + \frac{\partial^2 J}{\partial \theta_i\partial \theta_1}\cdot \frac{\partial}{\partial \theta_2}-\frac{\partial^2 J}{\partial \theta_i\partial \theta_2}\cdot \frac{\partial}{\partial \theta_1}, \end{equation}
where $J\colon \bT\to \bC$ is a meromorphic function with no poles on  the locus $\theta_1=\theta_2=0$. When pulled-back to $M$ using the abelian holonomy map it is given by the expression
\begin{equation}
\label{indexp}\frac{1}{2\pi i}\cdot J\circ\Theta= -\frac{1}{4\Delta p}\cdot  \big(2ap^2+3p(3b-2aq)r+(6aq^2-9bq+4a^2)r^2 - 2apr^3\big).\end{equation}
\end{thm}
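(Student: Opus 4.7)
I would derive Theorem \ref{again} from Theorem \ref{hi} by applying the symplectic change-of-coordinates machinery of Proposition \ref{fur}, passing from the $(a,b,\theta_a,\theta_b)$ coordinates to the $(z_1,z_2,\theta_1,\theta_2)$ coordinates. The first step is to verify that $(a,b)\to (z_1,z_2)$ is a symplectomorphism of $S$ in the sense required by that proposition. Taking $\omega = da\wedge db$, the computation
\[ dz_1\wedge dz_2 = (\omega_1\eta_2 - \omega_2\eta_1)\, da\wedge db = -2\pi i \cdot da\wedge db, \]
using \eqref{blob1}--\eqref{blob2} and the Legendre relation \eqref{leg}, shows that $\omega$ has constant coefficients in both charts. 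The induced linear change of fibre coordinates on $\cT_S\to S$ is then precisely \eqref{hide}.

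Proposition \ref{fur}, with its $w$-coordinates set to $(a,b)$ and its $z$-coordinates set to $(z_1,z_2)$, then equates the flows \eqref{zz1}--\eqref{zz2} with those of \eqref{wa} if and only if
\[
J(z_i,\theta_j) = K(a,b,\theta_a,\theta_b) - \tfrac{1}{6}\sum_{p,q,r} C_{pqr}(z)\, \theta_p\theta_q\theta_r,
\]
where $C_{pqr}$ is the Christoffel tensor \eqref{coeffs} of the flat connection on $\cT_S$ whose flat coordinates are $(a,b)$, expressed in the $(z_1,z_2)$ frame. Now recall from the proof of Theorem \ref{hi} that $K = J_0 + C$, where $J_0\circ\Theta$ equals the right-hand side of \eqref{indexp} and $C$ is the explicit $(\theta_a,\theta_b)$-cubic of \eqref{c}. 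It therefore suffices to establish the purely algebraic identity
\[
\tfrac{1}{6}\sum_{p,q,r} C_{pqr}(z)\, \theta_p\theta_q\theta_r = C(a,b,\theta_a,\theta_b)
\]
after the substitution $\theta_i = -\eta_i\theta_a + \omega_i\theta_b$. To compute the left-hand side one expands $\partial_{z_i}$ in the $(a,b)$ frame via \eqref{pens2}, takes a further flat derivative, and pairs with $\omega$; the resulting expressions involve $\partial\omega_j/\partial z_k$ and $\partial\eta_j/\partial z_k$, which are extracted by differentiating \eqref{blob1}--\eqref{blob2} and inverting via the Legendre relation. A careful but routine calculation should reduce this to exactly \eqref{c}, after which the two cubics cancel and $J$ reduces to $J_0$, matching \eqref{indexp}.

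Finally, for the regularity claim: the formula \eqref{indexp} is holomorphic on $\MM$ (where $\Delta \neq 0$ and $p \neq 0$), so $J$ is holomorphic on the open image $\Theta(\MM)\subset\bT$. The complement is the spin-bundle locus $\xi_i = \pm\exp(r\omega_i)$ of Remark \ref{defined}, and this meets $\{\theta_1=\theta_2=0\}$ only at the point corresponding to the trivial connection on the trivial bundle; a local expansion in the parameter $v$ (with $\theta_a=-v$, $\theta_b=-\zeta(v)-r$) shows $J$ is bounded there, hence its meromorphic extension to $\bT$ has no poles on $\{\theta_1=\theta_2=0\}$. The principal obstacle is the cubic identity of the previous paragraph: the cancellation between $C$ and the Christoffel correction depends delicately on the Legendre relation and on standard derivative identities for Weierstrass quasi-periods, and the explicit bookkeeping is somewhat involved.
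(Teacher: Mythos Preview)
Your strategy---apply Proposition~\ref{fur} to pass from the $(a,b,\theta_a,\theta_b)$ form of Theorem~\ref{hi} to the $(z_i,\theta_j)$ frame, and then identify the resulting Christoffel cubic with the explicit cubic $C$ of \eqref{c}---is correct and would succeed. The computation you flag as the principal obstacle is genuine: one must first establish the Picard--Fuchs--type identities expressing $\partial_a\omega_i,\partial_b\omega_i,\partial_a\eta_i,\partial_b\eta_i$ as $\bC(a,b)$-linear combinations of $\omega_i,\eta_i$ (these follow from integrating the exact forms $d(x^k/y)$ over $\gamma_i$ and using $y^2=x^3+ax+b$), after which the cubic identity becomes an elementary but lengthy polynomial check.

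The paper takes a shorter route that bypasses this computation entirely. After applying Proposition~\ref{fur} (with rescaled flat coordinates $w_1=\sqrt{2\pi i}\,b$, $w_2=\sqrt{2\pi i}\,a$, so that the symplectic form matches), one knows abstractly that the required $J$ differs from $K$ by \emph{some} homogeneous cubic in the $\theta_i$. The key observation is that $J$ is uniquely pinned down among such functions by the requirement that its second $\theta$-derivatives be single-valued on the image of $\Theta$ in $\bT$: these second derivatives are exactly the coefficients of the well-defined flow \eqref{wa}, whereas the second derivatives of any nonzero homogeneous cubic are nonconstant linear functions of the $\theta_i$ and hence have nontrivial monodromy around the loops in $\bT_s\cong(\bC^*)^2$. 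Since the explicit expression \eqref{indexp} is a single-valued rational function of $(a,b,q,p,r)$, and the fibre derivatives $\partial/\partial\theta_a,\partial/\partial\theta_b$ pull back via \eqref{inv} to single-valued vector fields on $\MM$, the second $\theta$-derivatives of \eqref{indexp} are automatically single-valued; the identification $J=J_0$ follows without ever writing down the Christoffel correction. Your direct approach trades this monodromy argument for a concrete period computation---more elementary in spirit, but considerably longer.

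For the regularity claim your sketch has the right shape but omits one step the paper makes explicit: before doing the local expansion, the paper first extends $\Theta$ over the locus $p=0$ (to the larger space $\MM'$), so that \eqref{indexp} already exhibits $J$ as meromorphic on the complement of the single curve $\theta_a=0$ corresponding to the trivial bundle; the Laurent expansion in $v$ (with $\theta_a=-v$, $\theta_b=-\zeta(v)-r$) is then carried out in detail to show that all negative-order terms cancel near $\theta_a=\theta_b=0$.
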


\begin{proof}

Let us define the rescaled co-ordinates
\[w_1=\sqrt{2\pi i}\cdot b , \qquad w_2=\sqrt{2\pi i}\cdot a.\]
Using the expressions \eqref{pens1}, and comparing with \eqref{hide2},  the associated fibre co-ordinates of \eqref{agon} are
\[\phi_1=\sqrt{2\pi i}\cdot \theta_b , \qquad \phi_2=\sqrt{2\pi i}\cdot \theta_a.\]
Since $\epsilon_{12}=\<\gamma_1,\gamma_2\>=1$, the Poisson and symplectic forms on $S$ are
\[\{z_1,z_2\}=1, \qquad \omega=-dz_1\wedge dz_2.\]
The relations \eqref{pens1}  then ensure that
\[\omega=2\pi i \cdot da\wedge db= - dw_1\wedge dw_2.\] 
Making the trivial change of variables from $(a,b,\theta_a,\theta_b)$ to $(w_1,w_2,\phi_1,\phi_2)$ shows that the  flows of Theorem \ref{hi} are given by the equations \eqref{flowk}.
Changing variables as in  Proposition \ref{fur} then gives the flows in the form \eqref{wa}.

The formula \eqref{blud} shows that on the image of $\Theta\colon M\into \bT$, the required function $J$ differs from the function $K$ of Theorem \ref{hi} by an expression which is cubic in the $\theta_i$ co-ordinates. Moreover, by construction, the second derivatives of $J$ with respect to the $\theta_i$ are single-valued functions on the image of $\Theta$. These two conditions uniquely determine $J$. Since the expression \eqref{j} has both the required properties, it follows that this coincides with the required function $J$. 

We can view the space $\MM$ as an open subset of a larger space $\MM'$ obtained by dropping the condition $p\neq 0$. By its construction, the abelian holonomy map extends to an open embedding $\Theta\colon \MM'\into \bT$, and  the expression \eqref{j} clearly defines a meromorphic function on the  open subset $\Theta(\MM')\subset \bT$. As explained in Remark \ref{defined}, the complement of this open subset is precisely the locus where $\xi_i=\exp(c\omega_i)$ for some $c\in \bC$. So it remains to understand the behaviour  of $J$ at these points.

Let us work over a small open subset $U\subset S$.  Let $0\in D\subset \bC$ be a  disc such that the punctured disc $D^\times=D\setminus\{0\}$ contains no points of $\Lambda_s$ for any $s=(a,b)\in U$. Consider   the  map \[h\colon U\times D^{\times}\times \bC\to \MM, \qquad (a,b,v,c)\mapsto \Big(a,b,\wp(v),\half\wp'(v),-\zeta(v)-c\Big).\] Using the formula \eqref{exp} we see that the composition 
$g =\Theta\circ h$ sends  $(a,b,v,c)$ to the point of $\bT$ with local co-ordinates $\theta_a=v$ and $\theta_b=c$. This shows that $g$, and hence also $h$, is an open embedding. Moreover $g$ clearly extends to an open embedding
$g\colon U\times D\times\bC\to \bT$.
It will now be enough to show that the pull-back of the third derivatives \eqref{bol}--\eqref{hol} via the map $g$ extend holomorphically over the locus $v=0$. But indeed, if we fix $(a,b)$ and send  $c$ and $v$ to $0$, we have
\[q=\wp(v)=v^{-2} +\alpha v^2+ O(v^4), \qquad r= -v^{-1}-c+ \tfrac{1}{3}\alpha v^3 +O(v^5),\]
\[p=\half\wp'(v)=-v^{-3} + \alpha v +O(v^3), \qquad p^{-1}=-v^3 -\alpha v^7 +O(v^9).\]
The equation $p^2=q^3+aq+b$ implies that $a+5\alpha=0$.
 It follows that,  ignoring terms of total order at least 2 in $c$ and $v$,  the Joyce function satisfies
 \[-\frac{4\Delta}{2\pi i}(J\circ \Theta)=2a(p-3qr+3q^2 p^{-1} r^2 + 2ap^{-1} r^2-r^3)+9b(r-qr^2 p^{-1})\]\[\sim 2a\big( -v^{-3}+\alpha v -3(v^{-2}+\alpha v^2)(-v^{-1}-c+\tfrac{1}{3}\alpha v^3) +2a(-v^3)(v^{-2})\]\[+ 3(v^{-4}+2\alpha)(-v^3-\alpha v^7)(v^{-2}-2cv^{-1}+c^2-\tfrac{2}{3}\alpha v^2)-(-v^{-3}-3cv^{-2}-3c^2 v^{-1} +\alpha v)\big)\]\[+9b\big(-v^{-1}-c-(v^{-2}+\alpha v^2)(v^{-2}+2cv^{-1}-\tfrac{2}{3}\alpha v^2)(-v^{3})\big) \]\[\sim 2a(-5\alpha v-2av)+9bc=2a^2 \theta_a +9b\theta_b.\]
 In particular, $J$ is holomorphic along the locus $\theta_a=\theta_b=0$.
 \end{proof}

\subsection{Properties of the Joyce function}
\label{bp}
The general theory developed in \cite{RHDT2} predicts some properties of the Joyce function, which it is interesting to  check explicitly in the example being considered here.

\begin{prop}
\label{porpoise}
The Joyce function  $J\colon \bT\to \bC$ of Theorem \ref{again} is a meromorphic function with the following properties:
\begin{itemize}
\item[(i)] it is an odd function in the $\theta_i$ co-ordinates:
\[J(z_1,z_2,-\theta_1,-\theta_2)=-J(z_1,z_2,\theta_1,\theta_2);\]
\item[(ii)] it is homogeneous of degree $-1$ in the co-ordinates $z_i$: for all $\lambda\in \bC^*$
\[J(\lambda z_1,\lambda z_2,\theta_1,\theta_2)=\lambda^{-1} \cdot  J(z_1,z_2,\theta_1,\theta_2);\]
\item[(iii)] it satisfies the partial differential equation
\begin{equation}
\label{justj}
\frac{\partial^2 J}{\partial \theta_i \partial z_j}-\frac{\partial^2 J}{\partial \theta_j \partial z_i}=\sum_{p,q} \epsilon_{pq} \cdot \frac{\partial^2 J}{\partial \theta_i \partial \theta_p} \cdot \frac{\partial^2 J}{\partial \theta_j \partial \theta_q}.
\end{equation}
\end{itemize}
\end{prop}

\begin{proof} 
For part (i) consider the involution of Remark \ref{actions}(a).  It is immediate from \eqref{indexp} that the Joyce function $J(z_i,\theta_j)$ changes sign under this transformation.
For part (ii) we consider the $\bC^*$-action of Remark \ref{actions}(b) which rescales the variables $(a,b,q,p,r)$ with weights $(4,6,2,3,1)$ respectively.  Again, it is immediate from \eqref{indexp} that  $J$ has weight $-5$ for this action. Since the co-ordinates $z_i$ and $\theta_j$ have weights 5 and 0 respectively, this proves the claim.

For part (iii) note first that  the isomonodromy connection on $\pi \colon \MM\to S$ is by definition the pull-back of the trivial connection on the projection map $\pi \colon V\times S\to S$ via the map \[(F(\hbar),\pi)\colon \MM\to V\times S.\] In particular it is flat. Writing out the zero curvature condition
\[  \Bigg[\frac{\partial}{\partial z_1} + \frac{1}{\hbar} \frac{\partial}{\partial \theta_1} +\frac{\partial^2 J}{\partial \theta_1\partial \theta_1}\frac{\partial}{\partial \theta_2}-\frac{\partial^2 J}{\partial \theta_1\partial \theta_2}\frac{\partial}{\partial \theta_1}, \frac{\partial}{\partial z_2} + \frac{1}{\hbar} \frac{\partial}{\partial \theta_2} + \frac{\partial^2 J}{\partial \theta_1\partial \theta_2}\frac{\partial}{\partial \theta_2}-\frac{\partial^2 J}{\partial \theta_2\partial \theta_2}\frac{\partial}{\partial \theta_1}\Bigg]=0\]
for the flows \eqref{wa} shows that the partial derivative of \eqref{justj} with respect to any co-ordinate $\theta_j$ vanishes. So in other words, the difference between the two sides of relation \eqref{justj} is independent of the co-ordinates $\theta_j$.

To complete the proof it will be enough to show that the two sides of \eqref{justj} both vanish on the locus $\theta_1=\theta_2=0$. By the calculation given in the proof of Theorem \ref{again}
\[\frac{1}{2\pi i}\cdot \frac{\partial J}{\partial \theta_a}\Big|_{\theta_1=\theta_2=0}= \frac{2a^2}{4\Delta} , \qquad \frac{1}{2\pi i}\cdot \frac{\partial J}{\partial \theta_b}\Big|_{\theta_1=\theta_2=0}=\frac{9b}{4\Delta},\]
so we find that
\[\frac{\partial ^2 J}{\partial \theta_a \partial b}\Big|_{\theta_1=\theta_2=0}- \frac{\partial ^2 J}{\partial \theta_b \partial a}\Big|_{\theta_1=\theta_2=0}=\frac{2\pi i}{4\Delta^2} \Big( 9b\frac{\partial \Delta}{\partial a}-2a^2 \frac{\partial \Delta}{\partial b}\Big)=0.\]
It follows that the left-hand side of \eqref{justj} vanishes on the locus $\theta_1=\theta_2=0$. But the right-hand side also vanishes because by part (i) 
 $J$ is an odd function of the $\theta_i$.  \end{proof}

\subsection{The linear Joyce connection }

An interesting output of the general theory developed in  \cite{RHDT2} is a flat, torsion-free connection on the tangent bundle of the space $S$, which we call  the linear Joyce connection. To define it, note that by Theorem \ref{again} the function $J$ is holomorphic in a neighbourhood of the section of  the map $\pi \colon \bT\to S$ defined  by setting $\theta_1=\theta_2=0$. Proposition \ref{porpoise}(i) implies that the flows
\[\frac{\partial}{\partial z_i} + \frac{\partial^2 J}{\partial \theta_i\partial \theta_1}\frac{\partial}{\partial \theta_2}-\frac{\partial^2 J}{\partial \theta_i\partial \theta_2}\frac{\partial}{\partial \theta_1},\]
preserve this section, and it follows  that their derivatives in the fibre directions are the flat sections of a linear connection on its normal bundle. This normal bundle can in turn be identified with the tangent bundle $\cT_S$ via the map
\[\frac{\partial}{\partial \theta_1}\mapsto  \frac{\partial}{\partial z_1}, \qquad \frac{\partial}{\partial \theta_2}\mapsto  \frac{\partial}{\partial z_2}. \]
The resulting connection on  $\cT_S$ is given explicitly by the formula
\begin{equation}\label{nab}\nabla^J_{\frac{\partial}{\partial z_i}}\Big(\frac{\partial}{\partial z_j}\Big)=  -\sum_{k,l}\epsilon_{kl}\cdot\frac{\partial^3 J}{\partial \theta_i \, \partial \theta_j \, \partial \theta_k}\Big|_{\theta=0} \cdot \frac{\partial}{\partial z_l}.\end{equation}
For more details on the general definition and properties of the linear Joyce  connection the reader can consult \cite[Section 7]{RHDT2}.
The next result shows that, at least  in the particular context treated in this paper, it is  a very natural object. 

\begin{thm}
\label{obvious}
The linear Joyce connection $\nabla^{ J}$ is the unique connection on $S$ for which the co-ordinates $(a,b)$ are flat.
\end{thm}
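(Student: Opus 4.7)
Uniqueness is immediate: a flat torsion-free connection on $S$ is determined up to affine transformations by any choice of flat coordinate system, so there is at most one such connection for which $(a,b)$ are flat coordinates.

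For existence, the strategy is to identify $\nabla^J$ with the unique flat torsion-free connection $\nabla^0$ having $(a,b)$ as flat coordinates. First, introduce the rescaled symplectic coordinates $w_1 = \sqrt{2\pi i}\,b$, $w_2 = \sqrt{2\pi i}\,a$ on $S$ as in the proof of Theorem~\ref{again}; the flat connection for which $(w_1,w_2)$ are flat coincides with $\nabla^0$. Applying Proposition~\ref{fur} to this connection, the Hamiltonian $K$ in $(w_i,\phi_j)$-coordinates (from Theorem~\ref{hi}) and $J$ in $(z_i,\theta_j)$-coordinates (from Theorem~\ref{again}) must satisfy
\[J(z_i,\theta_j) = K(w_i,\phi_j) - \tfrac{1}{6}\sum_{p,q,r} C^0_{pqr}(z)\,\theta_p\theta_q\theta_r,\]
where $C^0_{pqr}$ denote the Christoffel symbols of $\nabla^0$ in $(z_i)$-coordinates. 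Comparing with the explicit decomposition $K = J + C$ from Theorem~\ref{hi} forces the identification $C = \tfrac{1}{6}\sum_{p,q,r} C^0_{pqr}(z)\,\theta_p\theta_q\theta_r$.

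Given this identification, the claim $\nabla^J = \nabla^0$ becomes equivalent to the statement that all cubic Taylor coefficients of $K$ at $\theta = 0$ vanish. By Proposition~\ref{porpoise}(i), $J$ is odd in the $\theta_i$; since $C$ is homogeneous cubic, $K = J+C$ is also odd, so every even-order Taylor coefficient of $K$ automatically vanishes. The identities \eqref{pet} then reduce the required cubic vanishing to the statement that each of the functions $\lambda,\mu,\nu$ from \eqref{es} has vanishing first Taylor derivative at $\theta = 0$.

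The verification proceeds through the parameterization $h\colon U\times D\times\bC\to\MM$ from the proof of Theorem~\ref{again}, under which $\theta_a = -v$ and $\theta_b = c$. Substituting the Laurent expansions $\wp(v) = v^{-2}+O(v^2)$, $\wp'(v) = -2v^{-3}+O(v)$, $\zeta(v) = v^{-1}+O(v^3)$ (using $g_2 = -4a$ and $g_3 = -4b$) into \eqref{es}, one verifies that each of $\kappa_0(v)$, $\kappa_1(v)+r/(2p^2)$, and $\kappa_2(v) + qr/p^2 - (3q^2+a)r^2/(2p^3)$ is of order at least three in $(v,c)$ at the origin. For instance $\kappa_0'(v) = 1/(2p^2) = O(v^6)$, so $\kappa_0(v) = O(v^7)$; and the potentially singular contributions in $\mu$ and $\nu$ collapse to order $O(v^3)$ because of the balanced behaviors $r \sim -v^{-1}$, $p \sim -v^{-3}$, $q \sim v^{-2}$. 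The main technical obstacle is managing the cancellations in this Laurent expansion, but each case follows by a routine manipulation.
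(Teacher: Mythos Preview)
Your proof is correct and follows essentially the same route as the paper's. Both arguments reduce the claim to showing that the third $\theta$-derivatives of $K$ vanish along $\theta=0$, and both verify this via the Laurent expansion of $q,p,r$ in the uniformising parameter $v$; the paper does this by pointing to the limiting argument applied directly to the formulae \eqref{bol}--\eqref{hol}, while you equivalently check that $\lambda,\mu,\nu$ are $O((v,c)^3)$. Your use of Proposition~\ref{fur} to justify the equivalence $\nabla^J=\nabla^0 \iff K_{ijk}|_{\theta=0}=0$ is a slight repackaging of the paper's direct assertion of \eqref{fafa}, but the content is the same.
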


\begin{proof}
The same argument used to derive the formula \eqref{nab} shows that in the alternative co-ordinates $(w_i,\phi_j)$ used in the proof of Theorem \ref{again}, the linear Joyce connection is given by
\begin{equation}
\label{fafa}\nabla^J_{\frac{\partial}{\partial w_i}}\Big(\frac{\partial}{\partial w_j}\Big)=  -\sum_{k,l}\epsilon_{kl}\cdot\frac{\partial^3 K}{\partial \phi_i \, \partial \phi_j \, \partial \phi_k}\Big|_{\phi=0} \cdot \frac{\partial}{\partial w_l}.\end{equation}
But applying the limiting argument used in the proof of Theorem \ref{again} to the equations \eqref{bol}-\eqref{hol} shows that the third derivatives of the function $K$ vanish along the locus $\phi_1=\phi_2=0$. Thus the right-hand side of \eqref{fafa} vanishes, and the functions $w_i$ are flat for the linear Joyce connection. \end{proof}

\subsection{The Joyce form}
Let us introduce the vector field
\[E=z_1\frac{\partial}{\partial z_1}+z_2\frac{\partial}{\partial z_2},\]
and consider the endomorphism of $\cT_S$ defined by\[V(X)=X-\nabla^J_X(E).\]
 General theory developed in \cite[Section 7]{RHDT2} shows that  the bilinear form
\[g(X,Y)=\omega\big(V(X),Y\big)\]
is symmetric, and that both this form, and the operator $V$, are  covariantly constant with respect to the linear Joyce connection. We call $g(-,-)$  the Joyce form. Note that when the Joyce form is non-degenerate, the resulting complex metric on $S$  is necessarily flat, since the associated Levi-Civita connection is the linear Joyce connection $\nabla^J$. 

\begin{prop}
The operator $V$ is given by
\[V\bigg(\frac{\partial}{\partial a}\bigg)=\frac{1}{5}\cdot \frac{\partial}{\partial a}, \qquad V\bigg(\frac{\partial}{\partial b}\bigg)=-\frac{1}{5}\cdot\frac{\partial}{\partial b},\]
and
the Joyce form is
\[g=\frac{2\pi i}{5} \cdot (da\tensor db+ db\tensor da).\]
\end{prop}

\begin{proof}
The properties of the $\bC^*$-action used in the proof of Proposition \ref{porpoise} show that
\[E=z_1\frac{\partial}{\partial z_1}+z_2\frac{\partial}{\partial z_2} =\frac{4}{5} \cdot a\frac{\partial}{\partial a} + \frac{6}{5}\cdot b \frac{\partial}{\partial b}.\]
Using Theorem \ref{obvious} the claims then follow directly  from the  definitions.
\end{proof}

\section{ BPS structures}
\label{sec_bps}

In the last section it was explained that the isomonodromy connection for the family of deformed cubic oscillators gives   an example of a Joyce  structure in the sense of \cite{RHDT2}. The remainder of the paper is devoted  to showing how the monodromy map, and hence also the isomonodromy connection,   can be derived from much simpler data called a variation of BPS structures,  by solving an infinite-dimensional Riemann-Hilbert problem.

In this section we introduce BPS structures and their variations. These axiomatise the wall-crossing properties of Donaldson-Thomas (DT) invariants under deformations of stability parameters. We then introduce the Riemann-Hilbert problem associated to a BPS structure. For more details on the contents of this section we refer the reader to \cite{RHDT}.

\subsection{BPS structures}
The notion of a BPS structure was introduced in \cite{RHDT} to axiomatise the output of unrefined DT theory. It is a special case of Kontsevich and Soibelman's notion of a stability structure in a graded Lie algebra  \cite{KS}. In this paper we will only need to consider finite BPS structures, which allows us to make some significant expositional simplifications compared to the general treatment of  \cite{RHDT}.

\begin{defn}
\label{bps}
A finite BPS structure  consists of \begin{itemize}
\item[(a)] a finite-rank free abelian group $\Gamma\isom \bZ^{\oplus n}$, equipped with a skew-symmetric form \[\<-,-\>\colon \Gamma \times \Gamma \to \bZ;\]

\item[(b)] a homomorphism of abelian groups
$Z\colon \Gamma\to \bC$;\smallskip

\item[(c)] a map of sets
$\Omega\colon \Gamma\to \bQ;$
\end{itemize}

satisfying the following properties:

\begin{itemize}
\item[(i)] $\Omega(-\gamma)=\Omega(\gamma)$ for all $\gamma\in \Gamma$, and $\Omega(0)=0$;\smallskip
\item[(ii)] there are only finitely many classes $\gamma\in \Gamma$ such that $\Omega(\gamma)\neq 0$.\footnote{For the general notion of a (possibly non-finite) BPS structure  the condition (ii) is replaced with a weaker condition called the support property: see \cite[Section 2]{RHDT} for details.} \end{itemize}
\end{defn}

A finite BPS structure $(\Gamma,Z,\Omega)$ is called
non-degenerate if the form $\<-,-\>$ is non-degenerate, 
and integral if  $\Omega(\gamma)\in \bZ\subset \bQ$ for all $\gamma\in \Gamma$.

\subsection{The twisted torus}
\label{twtw}

Let $(\Gamma,Z,\Omega)$  be a finite BPS structure as above. We  introduce
the algebraic torus
\[\bT_+=\Hom_\bZ(\Gamma,\bC^*)\isom (\bC^*)^n,\]
whose character lattice is $\Gamma$. 
We denote  its co-ordinate ring by
\[\bC[\bT_+]=\bC[\Gamma]\isom \bC[y_1^{\pm 1}, \cdots, y_n^{\pm n}],\]
and write $y_\gamma\in \bC[\bT_+]$ for the  character of $\bT_+$ corresponding to an element $\gamma\in \Gamma$.

We also consider the associated torsor
\[\bT_-= \big\{g\colon \Gamma \to \bC^*: g(\gamma_1+\gamma_2)=(-1)^{\<\gamma_1,\gamma_2\>} g(\gamma_1)\cdot g(\gamma_2)\big\},\]
called the twisted torus. The difference between  $\bT_+$ and $\bT_-$  just has the effect of introducing signs into various formulae, and can safely be ignored at first reading.

The co-ordinate ring of $\bT_-$ is spanned as a vector space by the functions
\[x_\gamma\colon \bT_-\to \bC^*, \qquad x_\gamma(g)=g(\gamma)\in \bC^*,\]
which we  refer to as twisted characters.  Thus
\begin{equation}\bC[\bT_-]=\bigoplus_{\gamma\in \Gamma} \bC\cdot x_\gamma, \qquad x_{\gamma_1}\cdot x_{\gamma_2}=(-1)^{\<\gamma_1,\gamma_2\>}\cdot x_{\gamma_1+\gamma_2}.\end{equation}

The torus $\bT_+$ acts freely and transitively on the twisted torus $\bT_-$  via
\[(f\cdot g)(\gamma)=f(\gamma)\cdot g(\gamma)\in \bC^*, \quad f\in \bT_+, \quad  g\in \bT_-.\]
Choosing a base-point $g_0\in \bT_-$ therefore gives a bijection \begin{equation}
\label{bloxy}\theta_{g_0}\colon \bT_+\to \bT_-, \qquad f\mapsto f\cdot g_0.\end{equation}
It is often convenient to choose a base-point in the finite subset
 \[\big\{g\colon \Gamma \to \{\pm 1\}: g(\gamma_1+\gamma_2)=(-1)^{\<\gamma_1,\gamma_2\>} g(\gamma_1)\cdot g(\gamma_2)\big\}\subset \bT_-,\]
 whose points  are 
 called quadratic refinements of the form $\<-,-\>$.

 A class $\gamma\in \Gamma$ is called active if the corresponding BPS invariant $\Omega(\gamma)$ is nonzero. A ray $\bR_{>0}\cdot z\subset \bC^*$ is called active if it contains a point of the form $Z(\gamma)$ with $\gamma\in \Gamma$ an active class.  Given a  finite and integral BPS structure, we define for each ray $\ell=\bR_{>0}\cdot z\subset \bC^*$  a birational automorphism of the twisted torus $\bT_-$  by the formula
\begin{equation}
\label{garage}\bS(\ell)^*(x_\beta)=x_\beta\cdot \prod_{Z(\gamma)\in \ell}   (1-x_{\gamma})^{\Omega(\gamma)\<\gamma,\beta\>}.\end{equation}
The product is over all active classes $\gamma\in \Gamma$ such that $Z(\gamma)\in \ell$. The assumption that the BPS structure is finite ensures that this is a finite set.

\subsection{Variation of BPS structures}

The  behaviour of  DT invariants under changes in stability parameters is controlled by the Kontsevich-Soibelman wall-crossing formula, which forms the main ingredient in the notion of a variation of BPS structures \cite{RHDT}. The condition that a family of BPS structures defines a variation is quite tricky to write down for  general BPS structures, and  the finiteness condition of Definition \ref{bps}(ii) will not usually be preserved under wall-crossing. Nonetheless, for the very special class of BPS structures considered in this paper, it is possible to give a straightforward formulation of the wall-crossing formula using birational automorphisms of the twisted torus $\bT_-$.

\begin{defn}
\label{co}
Let $S$ be a complex manifold. A collection of finite, integral and non-degenerate BPS structures $(\Gamma_s,Z_s,\Omega_s)$ indexed by the points  $s\in S$ forms a variation of  BPS structures if
\begin{itemize}
\item[(a)] the charge lattices  $\Gamma_s$ form a local system of abelian groups, and the intersection forms $\<-,-\>_s$ are covariantly  constant;\smallskip

\item[(b)] for any covariantly constant family of elements $\gamma_s\in \Gamma_s$, the central charges $Z_s(\gamma_s)\in \bC$ vary holomorphically;\smallskip

\item[(c)] consider an acute closed subsector $\Delta\subset \bC^*$, and for each $s\in S$ define the anti-clockwise composition over active rays in $\Delta$\begin{equation}\label{wcw} \bS_s(\Delta)=\prod_{\ell\subset \Delta} \bS_s(\ell);\end{equation}
then if $s\in S$ varies in such a way that the boundary rays of $\Delta$ are never active, the birational automorphism $\bS_s(\Delta)$ of the twisted torus $\bT_{s,-}$ is covariantly constant.
\end{itemize}
\end{defn}

For part (c) note that the flat connection on the family of lattices $\Gamma_s$ induces an Ehresmann connection on the family of associated twisted tori $\bT_{s,-}$, and we are asking that the birational automorphism $\bS_s(\Delta)$ is constant with respect to this.

\subsection{Riemann-Hilbert problem}

Let $(\Gamma, Z, \Omega)$ be a finite BPS structure with associated twisted torus  $\bT_-$. Given a ray $\rr\subset \bC^*$ we consider the corresponding half-plane
\begin{equation}
\label{halfplane}\bH_\rr=\{\hbar\in \bC^*:\hbar=z\cdot v \text{ with } z\in \rr\text{ and }\Re(v)>0\}\subset \bC^*.\end{equation}
We shall be dealing with meromorphic functions \[X_\rr\colon \bH_\rr\to  \bT_-.\] Composing with the twisted characters of $\bT_-$ we can equivalently consider functions
\[X_{\rr,\gamma}\colon \bH_\rr\to \bC^*,\qquad X_{\rr,\gamma}(t)=x_\gamma(X_\rr(t)).\]
The Riemann-Hilbert problem associated to the BPS structure $(\Gamma, Z, \Omega)$ depends on the additional  choice of element $\xi\in \bT_-$, which we refer to as the constant term. It reads as follows:

\begin{problem}
\label{dtsect}
For each non-active ray $\rr\subset \bC^*$ we  seek a meromorphic function
\[X_\rr\colon \bH_\rr\to \bT_-,\]
such that the following three conditions are satisfied:
\begin{itemize}

\item[(RH1)]  if two  non-active rays $\rr_1,\rr_2\subset \bC^*$ form the  boundary rays of a convex sector $\Delta\subset \bC^*$ taken in clockwise order then
\[X_{\rr_2}(\hbar)= \bS(\Delta)( X_{\rr_1}(\hbar)), \]
as meromorphic functions of $\hbar\in \bH_{\rr_-}\cap \bH_{\rr_+}$, where $\bS(\Delta)$ is as in \eqref{wcw};
\smallskip

\item[(RH2)]
for each non-active ray $\rr\subset \bC^*$, and each class $\gamma\in \Gamma$, we have
\[\exp(Z(\gamma)/\hbar)\cdot X_{\rr,\gamma}(\hbar) \to \xi(\gamma)\]
as $\hbar\to 0$ in the half-plane $\bH_\rr$;\smallskip

\item[(RH3)] for each non-active ray $\rr\subset \bC^*$, and each class $\gamma\in \Gamma$, there exists $k>0$ such that
\[|\hbar|^{-k} < |X_{\rr,\gamma}
(\hbar)|<|\hbar|^k,\]
for  $\hbar\in \bH_\rr$ satisfying $|\hbar|\gg 0$.
\end{itemize}
\end{problem}

Note that in constrast to the treatment in   \cite{RHDT} we have here allowed the functions $X_\rr$ to be meromorphic. The necessity of doing this was explained in \cite{Barbieri}. It has the unfortunate effect that we lose any hope to prove uniqueness of solutions. It would be interesting to find a natural characterisation of the  solutions to the Riemann-Hilbert problem constructed in this paper.


\section{Quadratic differentials}
\label{aaaa}

In this section we explain how the trajectory structure of the meromorphic quadratic differentials \eqref{diff} define a variation of BPS structures on the space $S$. This can be described completely explicitly and corresponds to the Donaldson-Thomas theory of the A$_2$ quiver. We also discuss the WKB triangulation defined by a saddle-free quadratic differential. For more details on meromorphic quadratic differentials on Riemann surfaces we refer the reader to \cite{BS}.

\subsection{Quadratic differentials}

Let us consider a meromorphic quadratic differential \[\phi(x)=\varphi(x) dx^{\tensor 2}\]
 on the Riemann surface $\bP^1$ having a single pole of order $7$ at the point $x=\infty$, and three simple zeroes.
It is easy to see  \cite[Section 12.1]{BS} that any meromorphic quadratic differential of this type can be put in the  form
\begin{equation}\label{phir}\phi(x)=(x^3 + ax +b) dx^{\tensor 2}\end{equation}
 by applying an automorphism of $\bP^1$. However it will not always be convenient to do this in what follows. Note also that care is required, since rescaling $x$ by a fifth root of unity preserves the form of \eqref{phir} but changes the pair $(a,b)$. 

Away from the zeroes and poles of $\phi(x)$ there is a distinguished local co-ordinate on $\bP^1$
\begin{equation}
\label{w}w(x)=\pm \int_*^{x} \sqrt{\varphi(u)} \, du\end{equation}
 in terms of which $\phi(x)$ takes the form $dw^{\tensor 2}$. Such a co-ordinate is uniquely determined up to transformations of the form $w\mapsto \pm w + c$.
 By definition, the horizontal foliation determined by $\phi(x)$  then consists of the arcs $\Im(w)=\constant$. This foliation has singularities at the zeroes and poles of $\phi(x)$.
 Local computations \cite{strebel} summarised in \cite[Section 3.3]{BS} show that
 \begin{itemize}
\item[(i)] there are three horizontal arcs emanating from each of the three simple zeroes;\smallskip

\item[(ii)] there are five tangent distinguished directions at the pole $x=\infty$, and  an open  neighbourhood $\infty\in U\subset \bP^1$ such that all horizontal trajectories entering $U$  approach $\infty$  along one of the distinguished directions.
\end{itemize}


Following \cite[Section 6]{BS} we take the 
 real oriented blow-up  of the surface $\bP^1$ at the point $\infty$ which is the unique pole of the quadratic differential $\phi(x)$. Topologically the  resulting surface $\bS$ is a disc. The distinguished directions  at the pole determine a subset of five  points $\bM\subset \partial \bS$ of the boundary of this disc; the pair $(\bS,\bM)$ is an example of a marked bordered surface. The horizontal foliation of $\bP^1$ lifts to a foliation on the surface $\bS$, with singularities at the points $\bM\subset \partial \bS$ and the zeroes of $\phi(x)$.


\subsection{Periods and saddle connections}
\label{persad}

Let us associate to a point $s\in S$ the  quadratic differential 
\begin{equation}\label{phi}\phi_s(x)= Q_0(x) dx^{\tensor 2} =  (x^3 + ax +b) dx^{\tensor 2}.\end{equation}
There is a canonically associated double cover
\begin{equation}
\label{peep}p\colon X_s\to \bP^1,\end{equation}
branched at the zeroes and pole of $\phi_s(x)$, on which there is a well-defined global choice of square-root of $\phi_s(x)$. This is nothing but the projectivisation of the affine elliptic curve
\[X_{s}^\circ=\big\{(x,y)\in \bC^2: y^2=x^3+ax+b\big\}.\]
considered before. The square-root is the meromorphic differental $y dx$, which has a single pole at the point at infinity. There is a well-defined group homomorphism
\begin{equation}
\label{stuffy}Z_s\colon H_1(X_s,\bZ)\to \bC, \qquad Z_s(\gamma)=\int_{\gamma} \sqrt{\phi_s(x)}\in \bC.\end{equation}
We shall call a point $s\in S$ generic if the image of $Z_s$ is not contained in a one-dimensional real subspace of $\bC$.

A horizontal trajectory of $\phi_s(x)$  is said to be of finite-length if it never approaches the pole $x=\infty$. In our situation any such trajectory necessarily connects two distinct simple zeroes of $\phi_s(x)$, and is known as a saddle connection. The inverse image of  a saddle connection under the double cover
\eqref{peep}
is a cycle $\gamma$, which can be canonically oriented by insisting  that
$Z_s(\gamma)\in \bR_{>0}$.
This gives a well-defined  homology class in $H_1(X_s,\bZ)$. See \cite[Section 3.2]{BS} for more details.\footnote{For the purposes of comparison with the general situation of \cite{BS} involving the hat-homology group $H_1(X_s^\circ,\bZ)^-$, note that the group $H_1(X_s,\bZ)$ coincides with its $-1$ eigenspace under the action of the covering involution of \eqref{peep}; indeed the $+1$ eigenspace can be identified with the first homology of the quotient $\bP^1$, which vanishes; moreover, puncturing $X_s$ at the inverse image of the pole $\infty\in \bP^1$ also leaves the first homology  unchanged.}

More generally we can consider trajectories of the differential $\phi_s(x)$ of some phase $\theta\in \bR$. By definition these are arcs which make a constant angle $\pi \theta$ with the horizontal foliation. Alternatively one can view them as horizontal trajectories for the rescaled quadratic differential $e^{-2\pi i \theta}\cdot \phi_s(x)$. Once again, these finite-length trajectories $\gamma\colon [a,b]\to \bC$ define homology classes in $ H_1(X_s,\bZ)$, with the orientation convention being that
$Z_s(\gamma)\in \bR_{>0}\cdot e^{\pi i \theta}$.

\subsection{Walls and chambers}

Given a point $s\in S$, the quadratic differential $\phi_s(x)$ is said to be saddle-free if it has no finite-length horizontal trajectories. This is an open condition on the space $S$. 
As  explained in \cite[Section 3.4]{BS},   the horizontal foliation of a saddle-free differential splits the surface $\bP^1$ into a union of domains called  horizontal strips and half-planes. In the present case we obtain five half-planes and two horizontal strips. The resulting trajectory structure  on the blown-up surface $\bS$ is  illustrated in Figure \ref{bel2}. The crosses denote zeroes of the differential, and the black dots are the points of $\bM$.

\begin{figure}[ht]
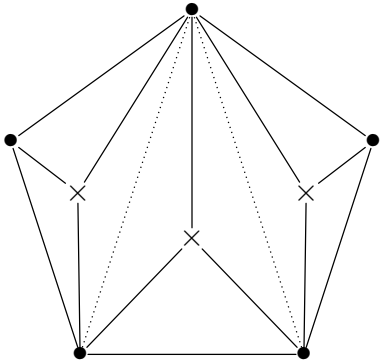

\begin{center}
  \begin{tabular}{c}
\xy /l1.2pc/:
(0,-5)*{\bullet}="1";
(4.76,-1.55)*{\bullet}="2";
(2.94,4.05)*{\bullet}="3";
(-2.94,4.05)*{\bullet}="4";
(-4.76,-1.55)*{\bullet}="5";
(0,1)*{\times}="6";
(3,-.2)*{\times}="7";
(-3,-.2)*{\times}="8";
{"1"\PATH~={**@{-}}'"2"},
{"2"\PATH~={**@{-}}'"3"},
{"3"\PATH~={**@{-}}'"4"},
{"4"\PATH~={**@{-}}'"5"},
{"5"\PATH~={**@{-}}'"1"},
{"2"\PATH~={**@{-}}'"7"},
{"3"\PATH~={**@{-}}'"6"},
{"4"\PATH~={**@{-}}'"6"},
{"5"\PATH~={**@{-}}'"8"},
{"4"\PATH~={**@{-}}'"8"},
{"1"\PATH~={**@{-}}'"8"},
{"1"\PATH~={**@{-}}'"6"},
{"1"\PATH~={**@{-}}'"7"},
{"3"\PATH~={**@{-}}'"7"},
{"1"\PATH~={**@{.}}'"3"},
{"1"\PATH~={**@{.}}'"4"},
\endxy
\end{tabular}
\end{center}
\caption{The separating trajectories of a saddle-free differential  of the form \eqref{phi}. \label{bel2}}
\end{figure}

Taking one trajectory from the interior of each horizontal strip defines a triangulation of the marked bordered surface $(\bS,\bM)$ called the WKB triangulation (see \cite[Section 10.1]{BS} for details). In our case the result is the two dashed edges  in Figure \ref{bel2}. Note that there are exactly two internal edges, and all such triangulations differ by a rotation of the pentagon.
%
As explained in \cite[Section 3.6]{BS}, each of the two horizontal strips contains a unique finite-length trajectory of some  phase in the interval $(0,1)$, and the corresponding classes in $\gamma_i\in H_1(X_s,\bZ)$ determine a basis, whose elements are therefore indexed by the edges of the WKB triangulation.
%
%
%



\subsection{Associated BPS structures}
\label{abps}

 There is a variation of BPS structures  over the space $S$ naturally associated to the family of quadratic differentials  $\phi_s(x)$ defined by \eqref{phi}. 
 
 \begin{definition}\label{hann}The BPS structure $(\Gamma_s,Z_s,\Omega_s)$ associated to a generic point  $s\in S$   is defined as follows:
  \begin{itemize}
\item[(a)]  the charge lattice is $\Gamma_s=H_1(X_s,\bZ)$ with its intersection form $\<-,-\>$;
\item[(b)] the central charge  $Z_s\colon \Gamma_s\to \bC$ is the map \eqref{stuffy};
\item[(c)]  the BPS invariants  $\Omega_s(\gamma)$ are either $0$ or $1$, with $\Omega_s(\gamma)=1$ precisely if the differential $\phi_s(x)$ has a finite-length trajectory of some phase whose associated homology class is $\gamma\in \Gamma_s$.
\end{itemize}

\end{definition}

\begin{remark}
Condition (c) needs modification in the special case that the image of $Z_s$ is contained in a line $\bR\cdot z$, and  the correct definition of the invariants $\Omega_s(\gamma)$  at such non-generic  points is quite subtle (see \cite[Section 6.2]{JS}). This will play no role in what follows however, since what appears in the Riemann-Hilbert problem are the  automorphisms $\bS_s(\Delta)$ associated to sectors by the products  \eqref{wcw}, and by the wall-crossing formula these are locally constant, and hence determined by their values at generic points. See the last paragraph of the proof of Proposition \ref{andy} below. 
\end{remark}

Suppose that $s\in S$ corresponds to a  saddle-free and generic differential $\phi_s$. As explained in the last subsection, the lattice 
$\Gamma_s$ then has a distinguished basis $(\gamma_1,\gamma_2)\subset \Gamma_s$, indexed by the edges of the WKB triangulation, which can be canonically ordered by insisting that $\<\gamma_1,\gamma_2\>=1$. 
Set $z_i=Z(\gamma_i)\in \bC$. 
The orientation conventions discussed above imply that $\Im(z_i)>0$, and the genericity assumption is the statement that $\Im(z_2/z_1)\neq 0$.  

 \begin{prop}
 \label{proop}
Take a point $s\in S$, and let $(\Gamma_s,Z_s,\Omega_s)$ be the corresponding BPS structure. Suppose that the differential $\phi_s$  is saddle-free and generic, and let $(\gamma_1,\gamma_2)\subset \Gamma_s$ be the ordered basis as above.  Define  $z_i=Z(\gamma_i)\in \bC^*$.  Then the BPS invariants are as follows:

 \begin{itemize}
 \item[(a)]  if $\Im (z_2/z_1)>0$ then $\Omega_s(\pm \gamma_1)=\Omega_s(\pm \gamma_2)=1$ with all others zero; 
 \smallskip
 \item[(b)]  if $\Im (z_2/z_1)<0$ then $\Omega_s(\pm \gamma_1)=\Omega_s(\pm(\gamma_1+\gamma_2))=\Omega_s(\pm \gamma_2)=1$ with all others zero.
 \end{itemize}
 \bigskip
 
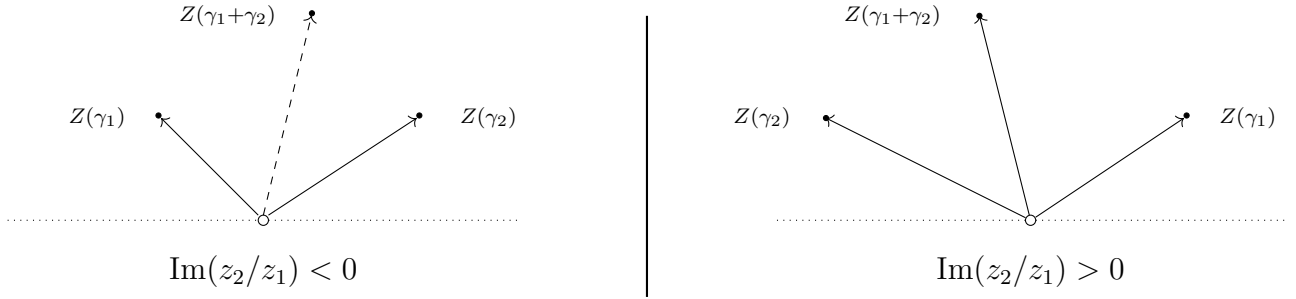
\begin{figure}[ht]
\begin{tikzpicture}[scale=.68]
\draw[dotted] (0,0)--(4.9,0);
\draw[dotted] (5.1,0)--(10,0);
\draw (5,0) circle [radius=0.1];
\draw[->] (5.1,0.1) -- (8,2);
\draw[->] (4.9,0.1) -- (3,2);
\draw[->,dashed](5.0,0.1)--(5.94,4);
\draw (9.4,2) node { $\scriptstyle Z(\gamma_2)$};
\draw (1.75,2) node {$\scriptstyle Z(\gamma_1)$};
\draw (4.3,4) node {$\scriptstyle Z(\gamma_1+\gamma_2)$};
\draw[fill] (8.05,2.05) circle [radius=0.05];
\draw[fill] (2.95,2.05) circle [radius=0.05];
\draw[fill] (5.95,4.05) circle [radius=0.05];
\draw[thick](12.5,-1.5)--(12.5,4);
\draw[dotted] (19.9,0)--(15,0);
\draw[dotted] (20.1,0)--(25,0);
\draw (5,-1) node {$\Im(z_2/z_1)<0$};
\draw (20,-1) node {$\Im(z_2/z_1)>0$};
\draw (20,0) circle [radius=0.1];
\draw[->] (20.1,0.06) -- (23,2);
\draw[->] (19.9,0.05) -- (16,2);
\draw[->] (19.975,0.1) -- (19,4);
\draw (24.25,2) node { $\scriptstyle Z(\gamma_1)$};
\draw (14.75,2) node { $\scriptstyle Z(\gamma_2)$};
\draw (17.3,4) node { $\scriptstyle Z(\gamma_1+\gamma_2)$};
\draw[fill] (23.05,2.05) circle [radius=0.05];
\draw[fill] (16,2) circle [radius=0.05];
\draw[fill] (19,4) circle [radius=0.05];
\end{tikzpicture}
\caption{The BPS structures of Proposition \ref{proop}.}\label{lateagain}
\end{figure}
\end{prop}
 
 \begin{proof}
 This could presumably be  proved by direct analysis of the trajectory structure of the differentials $\phi_s$. Alternatively, it follows  from the results of \cite{BS}, together  with the well-known representation theory of the A$_2$ quiver. In more detail, in the case of the marked bordered surface $(\bS,\bM)$ considered above, the CY$_3$ triangulated category $\cD(\bS,\bM)$ appearing in  \cite{BS} can be identified with the derived category $\cD$ of the Ginzburg algebra of the A$_2$ quiver  \cite[Section 12.1]{BS}. The main result \cite[Theorem 1.2]{BS} then shows that the differentials \eqref{phi} define stability conditions on this category, and moreover, by \cite[Theorem 1.4]{BS},  the  finite-length trajectories of  the  differential are in bijection with the stable objects of the associated stability condition. The result therefore follows from the easy and well known classification of stable representations of the A$_2$ quiver. Note that the basis $(\gamma_1,\gamma_2)\subset \Gamma_s$ correspond to the basis of the Grothendieck group $K_0(\cD)$ given by the classes of the vertex simples. The assumption $\<\gamma_1,\gamma_2\>=1$ then corresponds to a quiver with   a single arrow from vertex $2$ to vertex $1$.
 \end{proof}

In the situation of Proposition \ref{proop}  there is a quadratic refinement
 $g\in \bT_{s,-}$, defined by setting \[g(\gamma_1)=g(\gamma_2)=-1,\]
 which is unique with the property that $g(\gamma)=-1$ for every active class $\gamma\in \Gamma_s$. We use this element and the map \eqref{bloxy} to identify the twisted torus $\bT_{s,-}$ with the standard torus $\bT_{s,+}$. Under this identification the birational automorphism \eqref{garage} becomes the birational automorphism of $\bT_{s,+}$ defined by
 \[\bS(\ell)^*(y_\beta)=y_\beta\cdot \prod_{Z(\gamma)\in \ell}   (1+y_{\gamma})^{\Omega(\gamma)\<\gamma,\beta\>}.\]

Once we have Proposition \ref{proop}, the fact that the  BPS structures of Definition \ref{hann} form a variation of BPS structures comes down to the wall-crossing formula
\begin{equation}
\label{pent}C_{\gamma_1} \circ C_{\gamma_2} = C_{\gamma_2} \circ C_{\gamma_1+\gamma_2} \circ C_{\gamma_1},\end{equation}
where for each class $\alpha\in \Gamma_s$  we defined a birational automorphism $C_\alpha\colon \bT_{s,+}\dashrightarrow \bT_{s,+}$ by
\[C_\gamma^*(y_\beta)=y_\beta\cdot (1+y_\gamma)^{\<\gamma,\beta\>}.\]
This   identity is familiar in cluster theory, and can be viewed as the semi-classical limit of the pentagon identity for the quantum dilogarithm.

%
%
%
%
%


\section{The solution to the  Riemann-Hilbert problem}
\label{soln2}

In this section we  first introduce the Fock-Goncharov co-ordinates on the monodromy space $V$. These are birational maps to the torus $(\bC^*)^2$ and depend on a choice of triangulation of the pentagon. We then prove that, when composed with these maps, the monodromy map for the deformed cubic oscillator gives a solution to the Riemann-Hilbert problem  associated to the BPS structures of Section \ref{aaaa}. In particular, this gives a proof of Theorem \ref{two} from the introduction. Most of the  content of this section is due to Gaiotto, Moore and Neitzke \cite[Section 7]{GMN2}.

\subsection{Fock-Goncharov co-ordinates}
\label{fg}

Let  $(\bS,\bM)$ be a marked bordered surface of the kind appearing in Section \ref{aaaa}, namely  a disc with five marked points on the boundary.  We call two points $p,q\in \bM$ adjacent if they lie in the closure of the same  connected component of $\partial \bS\setminus \bM$. We introduce the space
\[\cV(\bS,\bM)=\big\{\psi\colon \bM \to \bP^1: \psi(p)\neq \psi(q)\text{ for all adjacent points }p,q\in \bM\big\}.\]

Let us now choose a triangulation $T$ of the surface $(\bS,\bM)$ as in Figure \ref{triz}. In particular, the vertices of $T$ are the points of $\bM$. There are precisely five possible choices for $T$, all related by rotations. We denote by $E(T)$ the set of internal edges of  $T$: this set contains exactly two elements. Define \[\cV_T(\bS,\bM)\subset \cV(\bS,\bM)\] to be the open subset consisting of those points for which the  elements $\psi(p)\in \bP^1$ associated to the two  ends of any edge of $T$ are distinct.

%

\begin{figure}
\begin{center}
  \begin{tabular}{c}
\xy /l1pc/:
(0,-5)*{\bullet}="1";
(4.76,-1.55)*{\bullet}="2";
(2.94,4.05)*{\bullet}="3";
(-2.94,4.05)*{\bullet}="4";
(-4.76,-1.55)*{\bullet}="5";
{"1"\PATH~={**@{-}}'"2"},
{"2"\PATH~={**@{-}}'"3"},
{"3"\PATH~={**@{-}}'"4"},
{"4"\PATH~={**@{-}}'"5"},
{"5"\PATH~={**@{-}}'"1"},
{"1"\PATH~={**@{-}}'"3"},
{"1"\PATH~={**@{-}}'"4"},
\endxy
\end{tabular}
\end{center}
\caption{A triangulation of the marked bordered surface $(\bS,\bM)$.\label{triz}}
\end{figure}

For each internal edge $e\in E(T)$ there is
a holomorphic map
\begin{equation}
\label{cr}X_e\colon \cV_T(\bS,\bM)\to \bC^*\end{equation}
obtained by taking the cross-ratio\[X_e=\CR(a_1,a_2,a_3,a_4)=\frac{(a_1-a_2)(a_3-a_4)}{(a_1-a_4)(a_2-a_3)},\]
 of the four points $a_i=\psi(i)\in \bP^1$ corresponding to the vertices of the two triangles adjoining the edge $e$. More precisely,  the points $\psi(i)$ should be taken in anti-clockwise order starting with one of the two ends of $e$: there are two possible such orderings, but the two choices give the same value for the cross-ratio.

Combining the maps $X_e$ associated to the two internal edges of $T$ gives a holomorphic map
\[X_T\colon \cV_T(\bS,\bM)\to (\bC^*)^{E(T)}\isom (\bC^*)^2.\]
The invariance property of the cross-ratio shows that this descends to the quotient space
\[V_T(\bS,\bM)=\cV_T(\bS,\bM)/\PGL_2\subset V(\bS,\bM)=\cV(\bS,\bM)/PGL_2,\]
and it is easy to see that the resulting map
\begin{equation}\label{xbox}X_T\colon V_T(\bS,\bM) \to (\bC^*)^{E(T)}\isom (\bC^*)^2\end{equation}
is an isomorphism.
The components of this map  are called the Fock-Goncharov co-ordinates for the triangulation $T$.

\subsection{Solution to the Riemann-Hilbert problem}

Take a  point $s\in S$ and consider the corresponding quadratic differential \eqref{phi}. We would like to solve Problem \ref{dtsect} for the associated BPS structure $(\Gamma_s,Z_s,\Omega)$ of Definition \ref{hann}. As explained in Section \ref{abps}, there  is a distinguished quadratic refinement of the form $\<-,-\>_s$, and we can use the associated map \eqref{bloxy} to identify the twisted torus $\bT_{s,-}$ with the standard torus $\bT_s=\bT_{s,+}$. The Riemann-Hilbert problem then depends on a choice of a constant term   $\xi\in \bT_{s}$, and involves constructing meromorphic maps
\begin{equation}
\label{yrr}X_\rr\colon \bH_\rr \to \bT_{s}\end{equation}
for all non-active rays $\rr\subset \bC^*$, where $\bH_r$ is the half-plane defined in \eqref{halfplane}.

Let us assume first that the chosen point  $\xi \in \bT_{s}$ lies in the image of the abelian holonomy map $\Theta_s\colon \MM_s\to \bT_s$, so that we can write $\xi_s=\Theta_s(m)$ for some point $m\in \MM_s$.   We will construct a suitable map \eqref{yrr} by sending $\hbar\in \bH_r$ to the Fock-Goncharov co-ordinates of the monodromy   of the deformed cubic oscillator \eqref{de}-\eqref{pot} defined by the point $m\in \MM$. More precisely, we will take the  Fock-Goncharov co-ordinates  defined by the WKB triangulation of the quadratic differential 
\begin{equation}
\label{rescaled}\lambda^{-2} \cdot Q_0(x) dx^{\tensor 2}= \lambda^{-2}\cdot (x^3+ax+b)\cdot dx^{\tensor 2},\end{equation}
where $\lambda\in r$ is an arbitrary point of the given ray.  Note that the assumption  that the  ray $r\subset \bC^*$ is non-active is equivalent to the statement that the differential \eqref{rescaled}  is saddle-free for $\lambda\in r$.


One confusing point requires a little care. For each $\lambda\in \bC^*$ let us denote by $(\bS(\lambda),\bM(\lambda))$ the marked bordered surface determined by the rescaled differential \eqref{rescaled}. We can always take the underlying surface $\bS(\lambda)$ to be the unit disc in $\bC$, and the marked points  $\bM(\lambda)$  are then positive real multiples of the fifth roots of $\lambda^2$ (see for example \cite[Section 3.2]{AB}). Given a ray $r\subset \bC^*$, we will also use the notation $(\bS(r),\bM(r))$ for the marked bordered surface corresponding to an arbitrary point $\lambda\in r$.
It is important to note that if two rays $\rr_1,\rr_2\subset \bC^*$ lie in the same half-plane  then there is a canonical identification between the two surfaces $(\bS(\rr_i),\bM(\rr_i))$. In concrete terms, this is because the fifth root function is single-valued on any given half-plane. 

Returning to our non-active ray $\rr\subset \bC^*$, we can consider the associated  WKB triangulation $T(\rr)$ of the marked bordered surface $(\bS(\rr),\bM(\rr))$ . Since the internal edges of $T(\rr)$ are labelled by basis elements of the group $\Gamma_s$, the map \eqref{xbox} can be interpreted as a birational isomorphism
\begin{equation}\label{both}X_{T(\rr)}\colon V(\bS(\rr),\bM(\rr)) \dashrightarrow \bT_+.\end{equation}
On the other hand,  as in \cite[Section 5.3]{AB}, the Stokes sectors of the equation \eqref{diff} are in natural bijection with the points of $\bM(\hbar)$. As discussed above, since $\hbar\in \bH_\rr$, there is a canonical identification  between the surfaces $(\bS(r),\bM(r))$ and $(\bS(\hbar),\bM(\hbar))$. We can then compose the monodromy map
\[F\colon \bH_r\to V(\bS(r),\bM(r)), \qquad \hbar\mapsto F(\hbar)(m),\]
with the map \eqref{both} to obtain the required map $X_r\colon \bH_r\to \bT_s$. We now proceed to check the conditions (RH1) -- (RH3) of Problem \ref{dtsect}.

\subsection{Jumping}
 Let us  start with the jumping condition (RH1). Take a point $s\in S$ and let $\ell\subset \bC^*$ be an active ray for the corresponding BPS structure $(\Gamma_s,Z_s,\Omega_s)$. Consider non-active rays  $r_-$ and $r_+$ which are small anti-clockwise and clockwise deformations of the ray $\ell$.  We can identify the marked bordered surfaces $(\bS(r_{\pm}), \bM(r_\pm))$ associated to the rays $r_\pm$ with the surface $(\bS(\ell),\bM(\ell))$ as above, and hence also identify the spaces $V(r_\pm)$ with the fixed space $V(\ell)$. Let $T_\pm=T(r_\pm)$ be the WKB triangulations of the surface $(\bS(\ell),\bM(\ell))$ defined by the non-active rays $r_\pm$, and let $X_{T_\pm}\colon V(\ell)\dashrightarrow \bT$ be the associated  Fock-Goncharov co-ordinates. 

\begin{prop}
\label{andy}
The two systems of co-ordinates are related by
\[X_{T_+}=\bS(\ell)\circ X_{T_-} .\]
\end{prop}

\begin{proof}
Suppose first that $s\in S$ is generic. For $\lambda\in \ell$ the differential \eqref{rescaled} has a unique saddle connection, and  the WKB triangulations $T_\pm$ for the saddle-free differentials \eqref{rescaled} corresponding to $\lambda_\pm \in r_\pm$ differ by a flip in a single edge.  This situation is discussed in detail in \cite[Section 10.3]{BS}. 

Without loss of generality we can assume that the triangulation $T_+$ is as in Figure \ref{triz}. There are  two cases, depending on which edge of the triangulation is being flipped. These are illustrated in Figures \ref{ohgod} and \ref{ohgod2}.
We label the vertices of the pentagon in clockwise cyclic order as shown. In each case, the left-hand picture illustrates $T_-$, and the right-hand picture is $T_+$. The two 
edges $e^+_1,e^+_2$ of the triangulation $T_+$ are labelled by classes $\gamma_1,\gamma_2\in \Gamma$. Since $e_1^+,e_2^+$ appear as adjacent edges in clockwise  order in the unique triangle of $T_+$ which contains them both, the sign correction to \cite[Lemma 10.3]{BS}  mentioned in the proof of Proposition  \ref{proop}  shows that $\<\gamma_1,\gamma_2\>=1$. Let us now consider the two cases in turn.

\begin{figure}
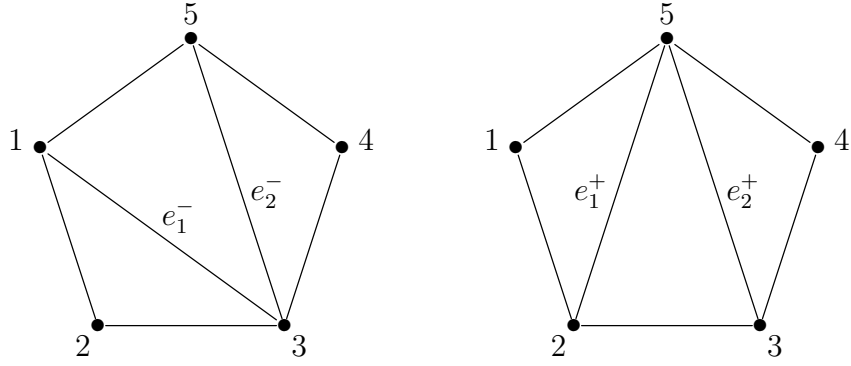

\begin{center}
  \begin{tabular}{c}
\xy /l1pc/:
(0,-5)*{\bullet}="1";
(4.76,-1.55)*{\bullet}="2";
(2.94,4.05)*{\bullet}="3";
(-2.94,4.05)*{\bullet}="4";
(-4.76,-1.55)*{\bullet}="5";
{"1"\PATH~={**@{-}}'"2"},
{"2"\PATH~={**@{-}}'"3"},
{"3"\PATH~={**@{-}}'"4"},
{"4"\PATH~={**@{-}}'"5"},
{"5"\PATH~={**@{-}}'"1"},
{"2"\PATH~={**@{-}}'"4"},
{"1"\PATH~={**@{-}}'"4"},
(0,-5.8)*{5};
(5.52,-1.8)*{1};
(3.41,4.7)*{2};
(-3.41,4.7)*{3};
(-5.52,-1.8)*{4};
(-2.4,-.2)*{e^-_2};
(.4,.6)*{e^-_1};
(-15,-5)*{\bullet}="6";
(-10.24,-1.55)*{\bullet}="7";
(-12.06,4.05)*{\bullet}="8";
(-17.94,4.05)*{\bullet}="9";
(-19.76,-1.55)*{\bullet}="10";
{"6"\PATH~={**@{-}}'"7"},
{"7"\PATH~={**@{-}}'"8"},
{"8"\PATH~={**@{-}}'"9"},
{"9"\PATH~={**@{-}}'"10"},
{"10"\PATH~={**@{-}}'"6"},
{"6"\PATH~={**@{-}}'"8"},
{"6"\PATH~={**@{-}}'"9"},
(-15,-5.8)*{5};
(-9.48,-1.8)*{1};
(-11.59,4.7)*{2};
(-18.41,4.7)*{3};
(-20.52,-1.8)*{4};
(-12.6,-.2)*{e^+_1};
(-17.4,-.2)*{e^+_2};
\endxy
\end{tabular}
\end{center}
\caption{Flipping the  triangulation: first case.\label{ohgod}}
\end{figure}

In the first case,  illustrated in Figure \ref{ohgod}, the edge $e_1^+$ is being flipped. According to \cite[Proposition  10.4]{BS}, the edges $e^-_1,e^-_2$ are labelled by the classes $-\gamma_1,\gamma_1+\gamma_2$. 
The Fock-Goncharov co-ordinates  are 
\[X^+_1=X_{T_+}^*(y_{\gamma_1})=\CR(a_5,a_1,a_2,a_3), \qquad X^+_2=X_{T_+}^*(y_{\gamma_2})=\CR(a_5,a_2,a_3,a_4),  \]
on the right, whereas on the left we have
\[X^-_1=X_{T_-}^*(y_{-\gamma_1})=\CR(a_1,a_2,a_3,a_5)=(X^+_1)^{-1},\]\[ X^-_2=X_{T_-}^*(y_{\gamma_1+\gamma_2})=\CR(a_5,a_1,a_3,a_4)=X^+_2\cdot \big(1+(X^+_1)^{-1}\big)^{-1},\]
where  we used the easily-checked identity
\[\CR(a_5,a_1,a_3,a_4)=\CR(a_5,a_2,a_3,a_4)\cdot \big(1+\CR(a_5,a_1,a_2,a_3)^{-1}\big)^{-1}.\]
Thus we have
\[X_{T_+}^*(y_{\gamma_1})=X_{T_-}^*(y_{\gamma_1}), \qquad X_{T_+}^*(y_{\gamma_2})=X_{T_-}^*\big(y_{\gamma_2}(1+y_{\gamma_1})\big).\]

Consider the central charges $Z_\pm =\lambda_\pm\cdot Z_s$ with $\lambda_\pm \in r_\pm$. By definition of the classes $\gamma_i\in \Gamma$ associated to the triangulation $T_+$ we have $\Im Z_+(\gamma_1)>0$. Since the rotation from $\lambda\in r_-$ to $\lambda\in r_+$ is clockwise, the  central charges $\lambda^{-1}\cdot Z(\gamma)$ rotate anti-clockwise, and it follows that for $\lambda\in \ell$ the central charge $\lambda^{-1}\cdot Z(\gamma_1)$ lies on the positive real axis. Thus the corresponding wall-crossing automorphism $\bS(\ell)=C_{\gamma_1}$ satisfies
\[\bS(\ell)^*(y_{\gamma_1})=y_{\gamma_1}, \qquad \bS(\ell)^*(y_{\gamma_2})=y_{\gamma_2}\cdot (1+y_{\gamma_1}),\]
and we therefore conclude that $X_{T_+}^*=X_{T_-}^*\circ \bS(\ell)^*$ as required.

In the second case,  illustrated in Figure \ref{ohgod2}, the edge $e_2^+$ is being flipped. This time \cite[Proposition  10.4]{BS} shows that the edges $e^-_1,e^-_2$ are labelled by the classes $\gamma_1, -\gamma_2$. 
The Fock-Goncharov co-ordinates  on the right are as before. On the left they are
\[ X^-_1=X_{T_-}^*(y_{\gamma_1})=\CR(a_5,a_1,a_2,a_4)=X_1^+\cdot (1+(X_2^+)),\]
\[X^-_2=X_{T_-}^*(y_{-\gamma_2})=\CR(a_4,a_5,a_2,a_3)=(X_2^+)^{-1},\]where we used
\[\CR(a_5,a_1,a_2,a_4)=\CR(a_5,a_1,a_2,a_3)\cdot \big(1+\CR(a_5,a_2,a_3,a_4)\big).\]
Thus we have
\[X_{T_+}^*(y_{\gamma_1})=X_{T_-}^*\big( y_{\gamma_1}(1+y_{\gamma_2})^{-1}\big),\qquad X_{T_+}^*(y_{\gamma_2})=X_{T_-}^*(y_{\gamma_2}).\]
This time the wall-crossing automorphism $\bS(\ell)=C_{\gamma_2}$ is given by
\[\bS(\ell)^*(y_{\gamma_1})=y_{\gamma_1}\cdot (1+y_{\gamma_2})^{-1},\qquad \bS(\ell)^*(y_{\gamma_2})=y_{\gamma_2},\]
so we again find that $X_{T_+}^*=X_{T_-}^*\circ \bS(\ell)^*$.

\begin{figure}
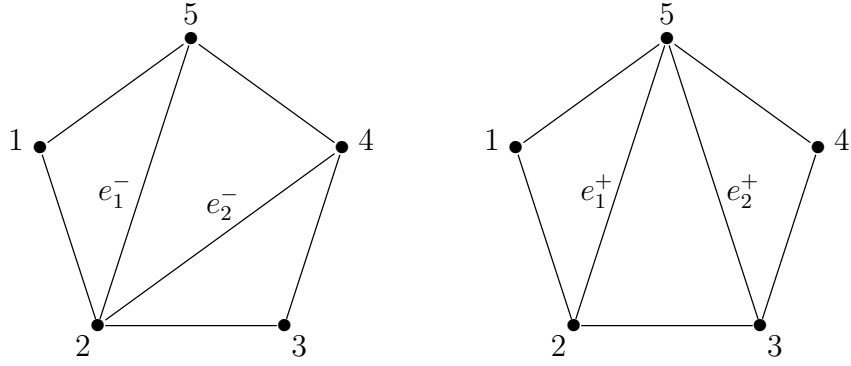

\begin{center}
  \begin{tabular}{c}
\xy /l1pc/:
(0,-5)*{\bullet}="1";
(4.76,-1.55)*{\bullet}="2";
(2.94,4.05)*{\bullet}="3";
(-2.94,4.05)*{\bullet}="4";
(-4.76,-1.55)*{\bullet}="5";
{"1"\PATH~={**@{-}}'"2"},
{"2"\PATH~={**@{-}}'"3"},
{"3"\PATH~={**@{-}}'"4"},
{"4"\PATH~={**@{-}}'"5"},
{"5"\PATH~={**@{-}}'"1"},
{"1"\PATH~={**@{-}}'"3"},
{"3"\PATH~={**@{-}}'"5"},
(0,-5.8)*{5};
(5.52,-1.8)*{1};
(3.41,4.7)*{2};
(-3.41,4.7)*{3};
(-5.52,-1.8)*{4};
(-1,0.2)*{e^-_2};
(2.4,-.2)*{e^-_1};
(-15,-5)*{\bullet}="6";
(-10.24,-1.55)*{\bullet}="7";
(-12.06,4.05)*{\bullet}="8";
(-17.94,4.05)*{\bullet}="9";
(-19.76,-1.55)*{\bullet}="10";
{"6"\PATH~={**@{-}}'"7"},
{"7"\PATH~={**@{-}}'"8"},
{"8"\PATH~={**@{-}}'"9"},
{"9"\PATH~={**@{-}}'"10"},
{"10"\PATH~={**@{-}}'"6"},
{"8"\PATH~={**@{-}}'"6"},
{"6"\PATH~={**@{-}}'"9"},
(-15,-5.8)*{5};
(-9.48,-1.8)*{1};
(-11.59,4.7)*{2};
(-18.41,4.7)*{3};
(-20.52,-1.8)*{4};
(-12.8,-.2)*{e^+_1};
(-17.4,-.2)*{e^+_2};
\endxy
\end{tabular}
\end{center}
\caption{Flipping the triangulation: second case.\label{ohgod2}}
\end{figure}

Consider now the case when $s\in S$ is not generic. The corresponding BPS structure has exactly two  active rays $\pm \ell$. Let us deform the  point $s\in S$  to a nearby generic point $t\in S$. Under this deformation the ray $\ell$ splits into two or three rays $\ell_i$ as in Figure \ref{lateagain}, but for $t$ close enough to $s$ these rays $\ell_i$ will  be contained in the sector bounded by the non-active rays $r_\pm$. The triangulations associated to the rays $r_\pm$ do not change under the deformation, and the wall-crossing formula \eqref{wcw} shows that the automorphism $\bS_s(\ell)$ is the clockwise composition of the automorphisms $\bS_t(\ell_i)$. The result for the non-generic point $s\in S$ now follows by applying the same result for the generic point $t\in S$ to each of the rays $\ell_i$. 
\end{proof}

\subsection{Behaviour as $\hbar\to 0$}
\label{wkb}

To verify condition (RH2) of Problem \ref{dtsect} we must show that the map
\[X_r\colon \bH_r\to \bT_s, \qquad X_r(\hbar)=X_{T(r)}(F(\hbar)(m)),\]
has the correct asymptotics as $\hbar\to 0$. As explained above, given an edge $e$ of the WKB triangulation for the differential \eqref{rescaled}, there is a corresponding class $\gamma_e\in \Gamma_s$ defined by the saddle connection crossing the associated horizontal strip. The statement we want is that
\[X_r(\gamma_e)(\hbar)\sim \exp(-Z(\gamma_e)/\hbar) \cdot \xi(\gamma_e),\]
as $\hbar\to 0$ in the half-plane $\bH_r$. To simplify matters a little,  we can, by applying the $\bC^*$ action on $\MM$ used in the proof of Proposition \ref{porpoise}, assume that the ray $r=\bR_{>0}$ is the positive real axis, and hence that the differential $\phi_s$ is saddle-free. 

Let us then state the required result as concretely as possible. Consider a  deformed cubic oscillator of the form \eqref{de}-\eqref{pot}, and assume that the corresponding quadratic differential $Q_0(x) dx^{\tensor 2}$  on $\bC$ is saddle-free. The horizontal trajectory structure of this differential then defines a WKB triangulation  of the regular pentagon with vertices at the fifth roots of unity. Moreover, each of the two edges $e_i$ of this triangulation $T$ is naturally labelled by a  class $\gamma_{i}$ in the homology group $H_1(X_s,\bZ)$  of  \eqref{homo}. We set
\[z_i=\int_{\gamma_i} \sqrt{Q_0(x)} \, dx\in \bC, \qquad \xi_i=\exp\bigg(\int_{\gamma_i} \frac{-Q_1(x)dx}{2\sqrt{Q_0(x)}}\bigg)\in \bC^*.\]
By definition of the orientation of the classes $\gamma_i$ we have $\Im(z_i)>0$.

When $\hbar\in \bR_{>0}$ the Stokes sectors for our equation \eqref{de} are centered on the rays spanned by the fifth roots of unity. Thus for all $\Re(\hbar)>0$ we can continuously  identify the Stokes sectors  with the vertices of the triangulation $T$. Using this identification, we let $X_i(\hbar)$ denote the Fock-Goncharov co-ordinate corresponding to the edge $e_i$ of the triangulation $T$, for the point of the monodromy manifold defined by the subdominant solutions  of the equation \eqref{de} .

\begin{thm}
\label{dav}
The Fock-Goncharov co-ordinates $X_i(\hbar)$ satisfy
\[\exp(z_i/\hbar)\cdot X_i(\hbar) \to \xi_i,\]
 as $\hbar\to 0$ in any closed subsector of the  half-plane $\Re(\hbar)>0$.
\end{thm}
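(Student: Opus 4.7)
The plan is to carry out an exact WKB analysis of the equation \eqref{de}. Writing a solution as $y = \exp\bigl(\int^x S(u,\hbar)\,du\bigr)$, the Riccati equation $S^2 + S' = Q(x,\hbar)$ admits a formal series solution whose odd-in-$\hbar$ part begins
\[
S_{\mathrm{odd}}(x,\hbar) = \hbar^{-1}\sqrt{Q_0(x)} \,-\, \frac{Q_1(x)}{2\sqrt{Q_0(x)}} \,+\, O(\hbar).
\]
The formal WKB solutions $y^{\mathrm{WKB}}_\pm = S_{\mathrm{odd}}^{-1/2}\exp\bigl(\pm \int S_{\mathrm{odd}}\,du\bigr)$ lift to the spectral cover $X_s$, and for any $\gamma \in H_1(X_s,\bZ)$ the Voros symbol $V_\gamma(\hbar) = \oint_\gamma S_{\mathrm{odd}}\,du$ has, by \eqref{zz} and \eqref{xi}, leading coefficients $z_\gamma/\hbar$ and $-\log\xi_\gamma$.

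Next I would invoke the central theorem of exact WKB theory (Voros, Koike--Sch\"afke, Kawai--Takei; see \cite{KT}): when $Q_0\,dx^{\tensor 2}$ is saddle-free, the formal WKB solutions are Borel summable in each horizontal strip and each half-plane of the horizontal foliation, and their Borel sums are honest analytic solutions of \eqref{de}. In each half-plane the Borel-summed $y^{\mathrm{WKB}}_-$ decays exponentially in the unique asymptotic direction associated to that half-plane, and so coincides up to scalar normalisation with the subdominant solution in the corresponding Stokes sector. This produces explicit WKB representatives for each of the five subdominant solutions indexing the vertices of the WKB pentagon.

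Third, I would express the Fock--Goncharov coordinate $X_i(\hbar)$ as a cross-ratio of the four surrounding subdominant solutions, equivalently as a ratio of Wronskians. Each such Wronskian of WKB solutions is computable to any order in $\hbar$; after the cancellations built into the cross-ratio, the surviving exponent is exactly $-V_{\gamma_i}(\hbar)$, where $\gamma_i$ is the lift to $X_s$ of the finite-phase saddle connection crossing the horizontal strip dual to the edge $e_i$ (the same cycle appearing in Section \ref{abps}). Substituting the expansion of $V_{\gamma_i}$ gives
\[
X_i(\hbar) = \exp(-z_i/\hbar)\cdot \xi_i\cdot (1 + O(\hbar)),
\]
which is the claimed asymptotic. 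Uniformity over any closed subsector of $\Re(\hbar)>0$ follows from Nevanlinna--Sokal type uniformity in the Borel summation, together with the openness of the saddle-free condition.

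The principal obstacle is the rigorous Borel summability, especially near the turning points (simple zeros of $Q_0$), where one needs classical Airy connection formulae, and near the apparent singularity $x=q$, where $S_{\mathrm{odd}}$ develops a simple pole of residue $-\tfrac12$. One must choose contours so that the contribution at $x=q$ produces only the integer multiple of $\pi i$ ambiguity in $V_{\gamma_i}$ already noted in Section \ref{formz}, consistent with the $\pm 1$ indeterminacy in $\xi_i$ flagged there.
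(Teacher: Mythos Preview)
Your proposal outlines the exact WKB / Borel summation route, which is conceptually sound and would indeed yield the statement, but it is a genuinely different approach from what the paper does. The Appendix proves the result by the classical complex WKB method: it writes the equation as a Volterra integral equation for the ratio $u=y/Y_m$ with $Y_m$ a truncated WKB approximant, and obtains the estimate $|u-1|\leq C_m|\hbar|^{m+1}$ directly by bounding the kernel along horizontal trajectories (Proposition~\ref{prop:basicwkb}). No Borel summation, Koike--Sch\"afke, or Airy connection formulae are invoked; the turning points are never approached, and the apparent singularity is handled by an elementary residue argument (Corollary~\ref{cor:residueatq}) showing $\mathrm{res}_{(q,\pm p)}\alpha_k\,dx=0$ for $k\geq 2$. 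The Fock--Goncharov coordinate is then computed not via Wronskians but as a ratio of asymptotic values $a_2/a_{-1}$ of $f=y_0/y_{-2}$, evaluated along explicit lifted paths on $X_s$ whose difference is homologous to $-\gamma_2$.

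What each approach buys: yours is more structural and, if carried through, gives Borel summability of the Voros symbols rather than just an asymptotic expansion; it also connects cleanly to the Iwaki--Nakanishi identification of Fock--Goncharov coordinates with exponentiated Voros symbols. Its cost is exactly the obstacle you name: the standard Borel summability theorems in \cite{KT} are stated for polynomial potentials, and extending them to the deformed potential with the $\tfrac{3}{4}(x-q)^{-2}$ term and the $Q_1$ pole is not available off the shelf. The paper's approach is more elementary and fully self-contained, at the price of a lengthier hands-on analysis; it yields the asymptotic expansion to all orders with uniform remainder estimates on closed subsectors, which is precisely what Theorem~\ref{dav} requires.
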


We defer the  proof of this result to the Appendix (written by Davide Masoero).

\subsection{Behaviour as $h\to \infty$}
\label{final4}
The final step is to check the condition (RH3). In fact we will prove more, namely that, for a fixed point $m\in \MM$,  the point $F(\hbar)(m)$ of the monodromy manifold tends to a well-defined limit point. To see this, we will 
use the homogeneity of the potential \eqref{pot} under the $\bC^*$ action of Remark \ref{actions}(b). 

\begin{prop}
For any point $m\in \MM$  the monodromy $F(\hbar)(m)\in V$ has a well-defined limit as $\hbar\to \infty$ in a fixed half-plane. This limit  is independent of $m\in \MM$ and is one of the two fixed points of the $\bZ/5\bZ$ action of Remark \ref{euros}.\end{prop}

\begin{proof}Let us consider the partial compactification  \[\bar{\MM}=\big\{(a,b,q,p,r)\in \bC^5: p^2=q^3+aq+b\big\}\]
of the space $\MM$, obtained by dropping the vanishing discriminant condition. We denote by $0\in \bar{\MM}$ the point where all co-ordinates vanish. For a given $\hbar\in \bC^*$ the monodromy map $F(\hbar)$ extends to a holomorphic map
\[\bar{F}(\hbar)\colon \bar{\MM}\to V,\]
subject to the usual warning that this depends on a choice of fifth root of $\hbar^2$.

Consider  the action of $\bC^*$  on $\bar{\MM}$  of Remark \ref{actions}(b) which scales the co-ordinates $(a,b,q,p,r)$ with weights $(4,6,2,3,1)$ respectively.
 Note that if we also rescale $\hbar$ with weight $5$, and $x$ with weight 2, then the equation \eqref{de}  is unchanged.  Thus for all points $(a,b,q,p,r)\in \bar{\MM}$
 \[\bar{F}(\hbar)(a,b,q,p,r)=\bar{F}(\lambda^{5}\hbar)(\lambda^{4}a,\lambda^{6}b,\lambda^{2}q,\lambda^{3}p,\lambda^1 r).\] Taking $\lambda^{5}\cdot \hbar=1$, and sending  $\hbar\to \infty$ in a fixed half-plane, it follows that the monodromy of $F(\hbar)(m)$ tends to the finite limit $\bar{F}(1)(0)$, which is the monodromy of the equation
\[y''(x)=\bigg({x^3}+ \frac{3}{4x^2}\bigg) y(x).\]

For the final claim, 
note that the above $\bC^*$ action  induces an action of the fifth roots of unity $\mu_5\subset \bC^*$, which leaves $\hbar$ invariant. Since this action rescales $x$ by an element of $\mu_5$,  the monodromy map $F(\hbar)$ intertwines this action with the $\bZ/5\bZ$ action on $V$ of Remark \ref{euros}. But the special point $0\in \bar{\MM}$ is clearly fixed by the $\mu_5$ action, so its image is also a fixed point. \end{proof}

%

%


\begin{appendix}

\section{Asymptotic analysis of the functions $X_i(\hbar)$  by Davide Masoero}

The Appendix is dedicated to the computation of the full asymptotic expansion of the function $e^{\frac{z_i}{\hbar}} X_i(\hbar), i=1,2$.
As a particular case, we prove Theorem 7.1 of the main text.

The Appendix is organised as follows. In Section \ref{sec:wkb} we study the asymptotic expansion of solutions
of the deformed cubic oscillator according to the Complex WKB method.
In Section \ref{sec:Xs} we lift the formal WKB solutions to the elliptic curve $X_s$
punctured at the branch points. Finally, in Section \ref{sec:proof} we prove Theorem \ref{thm:appendix}.
We made this Appendix self-contained. The proofs are lengthy but complete, and the reader experienced in the complex
WKB method may want to skip all proofs until the last section.

\subsection*{Acknowledgements} The author has benefitted from useful conversations with
Anna Barbieri, Tom Bridgeland, Akane Nakamura, and J\"org Teschner. He is partially supported by the FCT Project PTDC/MAT-PUR/30234/2017, ``Irregular connections on algebraic curves and Quantum Field Theory" and by the FCT Investigator grant IF/00069/2015, ``A mathematical framework for the ODE/IM correspondence".

\subsection{Statement of the result}

In order to state the our main result, we begin by fixing some notation.
Recall that we deal with the small $\hbar$ limit of the deformed cubic oscillator,
$y''(x)=Q(x)$, $Q(x)=\hbar^{-2}Q_0(x)+\hbar^{-1}Q_1(x)+Q_0(x)$, where
\begin{eqnarray} \label{eq:potApp}  Q_0=x^3+ax+b, \; Q_1(x)=\frac{p}{x-q}+r, \;
Q_2(x)=\frac{3}{4(x-q)^2}+\frac{r}{2p(x-q)}+\frac{r^2}{4p^2} .
\end{eqnarray}
Here $r$ is an arbitrary complex number, while the parameters $(q,p)$ are assumed to belong to
the affine elliptic curve $X_s^\circ=\lbrace p^2=Q_0(q) \rbrace$, punctured at the three branch points $p=0$. We call $B$ the set of the three branch points.
The projectivization of the affine elliptic curve is called $X_s$, and it is endowed with the canonical double cover
$p: X_s \to \mathbb{P}^1$, which is branched at $B$ and at infinity. 

The asymptotic expansion of the Fock-Goncharov co-ordinates is naturally written in terms of complete elliptic integrals over $X_s$.
The following meromorphic abelian differentials $\alpha(x)\,dx$ on $X_s$ are relevant to our analysis
\begin{eqnarray} \nonumber
 \alpha_0(x) &=&\sqrt{Q_0(x)},\quad
 \alpha_1(x)= -\frac{Q_0'(x)}{4 Q_0(x)} + \widetilde{\alpha_1}, \mbox{ with } \widetilde{\alpha_1}=\frac{ Q_1(x)}{2\sqrt{Q_0(x)}}\\ \nonumber
 \alpha_2(x) &=&\frac{1}{ 2 \sqrt{Q_0(x)}} \big(Q_2(x)-\alpha_1'(x)-\alpha_1^2(x) \big) \\ \label{eq:alphaforms}
 \alpha_k(x) &=&-\frac{1}{ 2 \sqrt{Q_0(x)}} \big(\alpha_{k-1}'(x)+ \sum_{j=1}^{m-1} \alpha_j(x) \alpha_{k-j}(x) \big) , \; k\geq 3 .
\end{eqnarray}

The cycles along which the above differentials are evaluated are the cycles $\gamma_{i}, i=1,2$ defined in the main text.\footnote{There is a subtle difference with respect to the main text relative to
the cycles $\gamma_i,i=1,2$. In the main text $\gamma_{1,2}$ are elements (a basis of) $H_1(X_s,\mathbb{Z})$. In our setting, they
are elements of the hat-homology $H_1(X_s^\circ \setminus B, \mathbb{Z})^-$; this is the $-1$ eigenspace of $H_1(X_s^\circ \setminus B, \mathbb{Z})$
under the action of the elliptic involution $p \to -p$, as defined in \cite{BS}.
The embedding $\iota: X_s^\circ \setminus B \to X_s$
induces an isomorphism $\iota^* :H_1(X_s^\circ \setminus B, \mathbb{Z})^- \to  H_1(X_s,\mathbb{Z})$; 
this follows from the same reasoning used in the main text,
in the footnote in Section \ref{persad}, to show that $H_1(X_s^\circ , \mathbb{Z})^-$ and $H_1(X_s,\mathbb{Z})$ are isomorphic.
Under the isomorphism $\iota^*$, the cycles $\gamma_{1,2}$
of this Appendix coincide with the ones defined in the main text.}

Finally, we use the following formalism in dealing with asymptotic expansions in sectors of the complex
$\hbar$ plane.
For every $\theta \in [0,\frac{\pi}{2}[$ and $\hbar_\theta >0$, we let $S_{\theta,\hbar_\theta}$ denote
the sector $ \lbrace |\arg \hbar |\leq \theta,0< |\hbar|\leq \hbar_\theta \rbrace$.
For any formal power series $A =\sum_{k\geq0} a_k \hbar^k \in \mathbb{C}[[\hbar]]$, we denote
by $A_m:=\sum_{k=0}^m a_k \hbar^k$ its $m-th$ truncation.
\begin{definition}
Let $f$ be a function on the sector $\Re \hbar>0 $, and $A$ a formal power series.
We say that $f$ is asymptotic to $A$, and we write
$f \approx A$ on $\Re \hbar >0$, if for every $\theta \in [0,\frac{\pi}{2}[$
there exists a sequence of positive constants $\hbar_{\theta}$, $C_{\theta,m}, m\geq0$ such that
$|f(\hbar)-A_m(\hbar)|\leq C_{\theta,m} |\hbar|^{m+1}$ for all $\hbar \in S_{\theta,\hbar_\theta}$. 
\end{definition}
\begin{theorem}\label{thm:appendix}
 Assume that $Q_0$ is saddle-free.
 The functions $ e^{\frac{z_i}{\hbar}} X_i(\hbar), i=1,2$ have the following asymptotic expansion
 \begin{equation}\label{eq:thesis}
  e^{\frac{z_i}{\hbar}} X_i(\hbar) \approx \xi_i \exp{\left( \sum_{k=1}^{\infty} \hbar^{k} C_{k,i} \right)} \mbox{ on } \Re \hbar>0,
 \end{equation}
where
\begin{equation}\label{eq:terms}
z_i=\int_{\gamma_i}\alpha_0(x) dx,
\quad \xi_i= e^{-\int_{\gamma_i}\big(\widetilde{\alpha}_1(x)+\frac{1}{2(x-q)} \big)dx},
\quad C_{k,i}= -  \int_{\gamma_i} \alpha_{k+1}(x) \, dx , \, k\geq1.
\end{equation}
In particular, $$\lim_{\hbar \to 0} e^{  \frac{z_i}{\hbar}} X_i(\hbar) = \xi_i \mbox{ in any closed subsector of } \Re \hbar >0 .$$
\end{theorem}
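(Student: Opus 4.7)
My overall strategy is to combine the complex WKB method for the deformed cubic oscillator with the saddle-free hypothesis on $Q_0$, which guarantees that the WKB approximation is uniformly controllable across the entire half-plane $\Re\hbar>0$, and then to compute each Fock--Goncharov co-ordinate as a cross-ratio of WKB-normalised subdominant solutions. The first step is to record that every local solution of $y''=Q(x,\hbar)y$ can be written as $y=\exp\int^{x}\alpha\,du$, so that the logarithmic derivative $\alpha$ satisfies the Riccati equation $\alpha'+\alpha^{2}=Q$. Substituting the formal ansatz $\alpha^{\pm}=\sum_{k\ge 0}\hbar^{k-1}\alpha^{\pm}_{k}$ with $\alpha_{0}^{\pm}=\pm\sqrt{Q_{0}}$ and matching powers of $\hbar$ reproduces exactly the recursion \eqref{eq:alphaforms}, so the forms $\alpha_{k}$ of the theorem are the coefficients of the two formal WKB series. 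Since $\alpha_{0}$ is double-valued on $\mathbb{P}^{1}$, the entire analysis lifts naturally to $X_{s}^{\circ}\setminus B$, where the saddle-free hypothesis decomposes $\mathbb{P}^{1}$ into five half-planes and two horizontal strips as in Figure \ref{bel2}; each internal edge $e_{i}$ of the WKB triangulation is crossed by a unique horizontal trajectory whose lift to $X_{s}$ represents the cycle $\gamma_{i}$. For any $\hbar$ with $\Re\hbar>0$, the analogous decomposition for $\hbar^{-2}Q_{0}\,dx^{\tensor 2}$ provides a continuously varying family of Stokes paths along which $\Re\bigl(\hbar^{-1}\!\int\alpha_{0}\bigr)$ is monotone; these are the paths along which WKB will be controlled.

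The second step is the rigorous truncation. For each closed subsector $S_{\theta,\hbar_{\theta}}\subset\{\Re\hbar>0\}$ and each $m\ge 0$, consider the truncated approximants
\begin{equation*}
y^{\pm}_{m}(x,\hbar)=\exp\int^{x}\sum_{k=0}^{m}\hbar^{k-1}\alpha^{\pm}_{k}(u)\,du,
\end{equation*}
normalised at the relevant Stokes vertex at infinity. Substituting $y=y^{\pm}_{m}(1+r_{m})$ into the ODE produces a Volterra integral equation for the remainder $r_{m}$ whose kernel is bounded along the Stokes paths of Step~1 precisely because $\Re\int\alpha_{0}$ is monotone there. A standard contraction-mapping argument of Olver--Sibuya type then yields the uniform estimate $|r_{m}(x,\hbar)|\le C_{\theta,m}|\hbar|^{m+1}$ on $S_{\theta,\hbar_{\theta}}$, and Sibuya's uniqueness theorem identifies the resulting genuine solution $y^{\pm}(x,\hbar)$ with the subdominant solution at the corresponding Stokes sector, up to an $\hbar$-independent scalar fixed by the base-point. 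Special care is needed at the apparent singularity $x=q$: a local Frobenius calculation shows that every true solution is single-valued at $x=q$, whereas $\widetilde\alpha_{1}$ has a simple pole there of residue $\pm\tfrac{1}{2}$ depending on the sheet; the correction $\tfrac{1}{2(x-q)}$ in \eqref{eq:terms} is precisely the modification of the integrand that converts the singular formal series into the integrand of the true cycle integral.

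The third step is the cross-ratio computation. The Fock--Goncharov co-ordinate $X_{i}(\hbar)$ is the cross-ratio of the four subdominant solutions associated with the quadrilateral surrounding the edge $e_{i}$. Evaluating this cross-ratio using the WKB representatives of Step~2, the $\hbar$-independent normalising pre-factors cancel, and the logarithm of the result reduces to a period integral of the odd part of the formal Riccati solution around the closed cycle $\gamma_{i}\subset X_{s}$. Reading off the terms of order $\hbar^{-1}$, $\hbar^{0}$ and $\hbar^{k}$ with $k\ge 1$ recovers $z_{i}$, $\xi_{i}$ and the $C_{k,i}$ of \eqref{eq:terms} and gives \eqref{eq:thesis}; the final limit statement is immediate. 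The real obstacle is Step~2: one must produce error bounds uniform in $\arg\hbar$ across the whole half-plane $\Re\hbar>0$, not merely along a single ray. The saddle-free hypothesis is indispensable here, since it guarantees that the Stokes graph, and hence the network of WKB paths used in the Volterra estimates, deforms continuously with $\arg\hbar\in(-\tfrac{\pi}{2},\tfrac{\pi}{2})$.
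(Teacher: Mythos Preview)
Your overall architecture---formal WKB series from the Riccati recursion, Volterra-type remainder estimates along Stokes paths guaranteed by the saddle-free hypothesis, then a cross-ratio computation that collapses to a cycle integral on $X_s$---is exactly the route taken in the Appendix. The paper's Proposition~\ref{prop:basicwkb} is your Step~2, and its ``Bacalhau diagram'' computation in Section~\ref{sec:proof} is your Step~3.

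There is, however, a genuine error in your treatment of the apparent singularity. You assert that every true solution is single-valued at $x=q$; in fact the Frobenius indices are $\tfrac{3}{2}$ and $-\tfrac{1}{2}$, so every nonzero solution has monodromy $-1$ around $q$ (this is precisely what ``apparent'' means here). This matters in Step~3: when you compute an asymptotic value $a_k$ as a limit of $y_0/y_{-2}$ along two different paths to infinity, the answer picks up a factor $(-1)^{s}$, where $s$ is the winding number around $q$ of the closed loop formed by the two paths. The paper tracks these signs explicitly via formula~\eqref{eq:akratio} and the winding numbers $s_\pm$; they are what ultimately reconcile $\exp(-\int_{\gamma_i}\alpha_1\,dx)$ with the $\xi_i$ of the statement, using the residue identities $\int_{\gamma_i}\frac{Q_0'}{4Q_0}\,dx=-i\pi$ and $\int_{\gamma_i}\frac{dx}{2(x-q)}\in i\pi\bZ$. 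Without the correct monodromy you would mis-handle these signs.

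Relatedly, your Step~3 is too compressed at the crucial point. The statement that ``the logarithm of the cross-ratio reduces to a period integral around $\gamma_i$'' is not automatic: one must construct specific lifts $\widehat\gamma_{k,k'}$ of the integration paths to $X_s$, show that the relevant combination $\gamma_i^+-\gamma_i^-$ is homologous to $-\gamma_i$ in $H_1(X_s^\circ\setminus B,\bZ)^-$, and verify that the higher forms $\alpha_k\,dx$ for $k\ge 2$ have vanishing residue at $(q,\pm p)$ so that the cycle integrals are unambiguous. The paper isolates this last point as Corollary~\ref{cor:residueatq}, proving it by comparing WKB approximations along two non-homotopic paths in $\Gamma_{k,k'}$---a step your outline does not supply.
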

\begin{remark}
 Before we tackle the proof of the Theorem, we check that all terms in the asymptotic expansions \eqref{eq:terms}
attain the same value for every path $\gamma$ in the homology class $\gamma_i \in  H_1(X_s^\circ \setminus B, \mathbb{Z})^- ,i=1,2$, even though
 the forms $\alpha_k$ are possibly singular at the points $(q,\pm p)$.

 This is indeed the case. In fact,
 \begin{itemize}
  \item[($k=0$)] The only singular point of $\alpha_0(x) dx$ is $\infty$. Furthermore its residue is zero.
 \item[($k=1$)] $e^{-\int_{\gamma_i}\big(\widetilde{\alpha}_1(x)+\frac{1}{2(x-q)} \big)dx}$ is well-defined, since 
 all residues of the form $\big(\widetilde{\alpha}_1(x)+\frac{1}{2(x-q)} \big)dx$ are integer-valued, as it is shown
 in the main text after formula (42).
 \item[($k\geq2$)]For the forms $\alpha_k(x)dx, k \geq 2$, the
 residue at $(q,\pm p)$ vanishes, see Corollary \ref{cor:residueatq} below.
 \end{itemize}
\end{remark}

 \subsection{WKB analysis of the deformed cubic oscillator}\label{sec:wkb}
Our approach is based
on transforming a linear ODE of the second order into an integral equation of Volterra type, following \cite{erdelyi10}:
We consider a second order scalar linear ODE of the form
\begin{equation}\label{eq:schrodingergen}
 y''(x)=Q(x) y(x), \quad x \in \mathbb{C}
\end{equation}
where $Q(x)$ may depend on additional parameters, and a putative approximate solution $Y(x)$, which we suppose to be of such a form that
\begin{equation}\label{eq:t}
 u(x)=\frac{y(x)}{Y(x)}
\end{equation}
is well-defined and approximately $1$ in a certain domain of $\mathbb{C}$ to be later specified.

Defining the forcing term
\begin{equation}\label{eq:forcing}
 F(x)=Q(x)-\frac{Y''(x)}{Y(x)}
\end{equation}
the equation \eqref{eq:schrodingergen} for $y(x)$, when rewritten in terms of the function $u(x)$ defined by \eqref{eq:t}, becomes
\begin{equation}\label{eq:usch}
 \frac{d}{dx}\big( Y^2(x) u'(x) \big)- Y^2(x) F(x) u(x)=0
\end{equation}
We fix a point $x'$ in the Riemann sphere, the boundary conditions $u'(x')=0,u(x')=1$, and a piece-wise smooth integration path
$\gamma$ connecting $x'$ to another point $x \in \mathbb{C}$.
Integrating twice equation \eqref{eq:usch}, $u(x)$ is proven to solve the following integral equation
\begin{equation}\label{eq:integral}
 u(x)=1-\int_{x',\gamma}^x K(x,s) F(s) u(s) d s \,, \qquad
 K(x,s)=\int_{s,\gamma}^x \frac{Y^2(s)}{Y^2(r)} d r \; ,
\end{equation}
provided the above integral converges absolutely.

Conversely, given any continuous solution $u(x)$ of the latter integral equation, the function
$y(x):=u(x) Y(x)$ solves \eqref{eq:schrodingergen} and satisfies the (possibly singular) Cauchy problem
\begin{equation*}
 \lim_{x \to x', x \in \gamma} \frac{y(x)}{Y(x)}=1 , \quad \lim_{x \to x', x \in \gamma} \frac{y'(x)}{Y'(x)}=1
\end{equation*}

\begin{remark}
Given a solution $u$ of \eqref{eq:integral}, the corresponding solution $y$ of \eqref{eq:schrodingergen}
is a priori only defined on the trajectory of the curve $\gamma$. It can however be analytically extended to any open
simply connected domain of analyticity of $Q$ which intersects the trajectory of $\gamma$. To be more precise: let $D \subset \mathbb{C}$ be
an open simply connected domain such that $Q_{|D}$ is analytic, and assume that, for some $t_1<t_2$, $\gamma(t) \in D$ for all $t\in]t_1,t_2[$;
there exists a unique solution $\widehat{y}:D \to \mathbb{C}$ such that $\widehat{y}(\gamma(t))=u(\gamma(t))$ for all $t \in ]t_1,t_2[$.
\end{remark}

\subsection{Formal WKB solutions}
 
We are interested in studying the small $\hbar$ limit of the equation
$y''(x)=Q(x)$ where $Q(x)=\hbar^{-2}Q_0(x)+\hbar^{-1}Q_1(x)+Q_0(x)$, as per
\eqref{eq:potApp}.

The $m-th$ WKB approximation, with $m\geq0$, is provided by the function
\begin{equation}\label{eq:Ym}
 Y_m(x;x')=
 \exp\left\lbrace \hbar^{-1} \sum_{k=0}^{m+1} \int_{x'}^x \hbar^k \alpha_k(s) ds \right\rbrace ,
\end{equation}
where the forms $\alpha_k(x) dx$ are recursively determined by the following requirement on the forcing term
\begin{equation}
F_m:=Q(x)- \frac{Y_m''(x)}{Y_m(x)}=O(\hbar^m).
\end{equation}
A simple computation shows that the forms $\alpha_k$ are given by equation \eqref{eq:alphaforms}, and that
\begin{equation}\label{eq:Fm}
F_0(x)=\alpha_1^2(x)+\alpha_1'(x)-Q_2 , \quad
F_m(x)= \hbar^m \left(\alpha'_{m+1}(x)+  \sum_{k=m}^{2m} \hbar^{k-m } \sum_{l=0}^{2m-k} \alpha_{m+1-l}\alpha_{k+1-m+l} \right).
\end{equation}

The following Lemma will be useful in the proof of the main result.
\begin{lemma}\label{lem:forms}
The forms $\widetilde{\alpha_1}dx$, $\alpha_k(x) dx, k\geq 2$ and
$\widehat{F}_m:=\frac{F_{m}(x)}{\sqrt{Q(x)}}dx, m\geq 0$
are holomorphic
on $X_s\setminus V$ where $V=B\cup \lbrace(q,p),(q,-p) \rbrace$.

\begin{proof}
The forms under consideration are well-defined and
meromorphic on $X_s$ since they are represented by the formula $\beta(x) dx$ with $\beta(x)=R(x,\sqrt{Q_0(x)})$ for some rational function $R$.
Because of formula \eqref{eq:alphaforms}, they are manifestly holomorphic on $X_s$ punctured at $V$ and at $\infty$.
Hence the thesis is proven if they are shown to be holomorphic at $\infty$.

Recall that a meromorphic form on $X_s$, written as $\beta(x) dx$, is regular at $\infty$ if the degree of $\beta(x)$
at $\infty$ is less or equal than $-\frac{3}{2}$ (in fact a good local parameter at $\infty$ is $\tau=x^{-\frac12}$ so that
$dx=-2 x^{\frac32} d \tau$).
Let $d_k$ denote the degree of $\alpha_k(x)$ at $\infty$. After formula \eqref{eq:alphaforms} we have
that $d_1=-1$, $\deg \widetilde{\alpha}_1=-\frac32$, $d_2=-\frac32$; moreover,
we recursively obtain $d_{2k+1}=-1-3(2k-1)$, and $d_{2k+2}=d_{2k+1}-\frac12$.
Let $\widehat{d}_m$ denote the degree of $\frac{F_{m}(x)}{\sqrt{Q(x)}}$ at $\infty$. After formula \eqref{eq:Fm}, we have that
$\widehat{d}_0=-\frac32$ and, recursively, $\hat{d}_k=d_{k+2}$. 
The thesis is proven.
\end{proof}
\end{lemma}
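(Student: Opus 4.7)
All three families of differentials are of the form $R(x,\sqrt{Q_0(x)})\,dx$ for some rational function $R$, so they define meromorphic differentials on the elliptic curve $X_s$. Away from the point $\infty\in X_s$, the only loci where any of these differentials could fail to be holomorphic are the zeros of $Q_0$ (which are exactly the branch points $B$) and the poles at $x=q$ of $Q_1$ and $Q_2$, which lift to the two points $(q,\pm p)\in X_s$. Both loci lie in $V$, so after this initial observation the problem reduces to checking holomorphicity at $\infty$.

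Since $Q_0$ has odd degree $3$, the point $\infty$ is a branch point of $p\colon X_s\to\mathbb{P}^1$. A local parameter there is $\tau=x^{-1/2}$, giving $dx=-2\tau^{-3}\,d\tau$, so a meromorphic form $\beta(x)\,dx$ is holomorphic at $\infty$ precisely when $\beta(x)$ has degree $\leq -3/2$ in $x$ at infinity. The plan is therefore to carry out a careful degree count at infinity for each family:
\begin{itemize}
\item For $\widetilde{\alpha}_1=Q_1/(2\sqrt{Q_0})$, observe that $\deg Q_1\leq 0$ and $\deg\sqrt{Q_0}=3/2$, giving $\deg\widetilde{\alpha}_1\leq -3/2$.
\item For $\alpha_k$ with $k\geq 2$, work by induction on $k$. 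A key point to notice is that $\deg Q_2=0$ (the constant term $r^2/(4p^2)$ dominates), so from the formula for $\alpha_2$ one gets $\deg\alpha_2\leq-3/2$. For $k\geq 3$, the recursion $\alpha_k=-(2\sqrt{Q_0})^{-1}\bigl(\alpha_{k-1}'+\sum_{j=1}^{k-1}\alpha_j\alpha_{k-j}\bigr)$ together with $\deg\alpha_1=-1$ and the inductive hypothesis yields the required bound (in fact $\deg\alpha_k\leq -5/2$ for $k\geq 3$).
\item For $\widehat{F}_m$, use the explicit expression \eqref{eq:Fm} for $F_m$, the degree bounds on the $\alpha_k$ just established, and the fact that $\sqrt{Q(x)}$ has degree $3/2$ at infinity (the leading term being $\hbar^{-1}\sqrt{Q_0}$). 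One finds $\deg(F_m/\sqrt{Q})\leq -3/2$.
\end{itemize}

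There is no serious obstacle: the argument is an entirely mechanical valuation calculation on the elliptic curve. The only subtlety worth flagging is that $Q_2$ has nonzero constant term at infinity, so one must not be fooled into thinking its degree is $-2$; this governs the borderline case $k=2$ where the estimate is tight. Once the base cases are handled correctly, the recursive structure of the WKB ansatz propagates the holomorphicity automatically to all higher $\alpha_k$ and to all $\widehat{F}_m$.
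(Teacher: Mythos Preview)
Your proof is correct and follows essentially the same approach as the paper's: reduce to checking holomorphicity at $\infty$, use the local parameter $\tau=x^{-1/2}$ to translate this into the criterion $\deg\beta\leq -3/2$, and then carry out a degree count using the recursion \eqref{eq:alphaforms} and the formula \eqref{eq:Fm}. The only cosmetic difference is that the paper tracks the exact degrees $d_k$ (obtaining e.g.\ $d_{2k+1}=-1-3(2k-1)$), whereas you work with upper bounds that are already sufficient for the conclusion; your explicit remark about $\deg Q_2=0$ is a helpful clarification of the borderline case $k=2$.
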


\subsection{WKB estimates}
Our method of analysis of the deformed cubic oscillator is based on the study of the integral equation \eqref{eq:integral},
in the case the approximate solution is the formal WKB solution $Y_m(x;x_0)$ and the forcing term is $F_m(x)$,
as defined by formulas \eqref{eq:Ym} and \eqref{eq:Fm}.

In order to cosntruct a solution of \eqref{eq:schrodingergen} using the integral equation \eqref{eq:integral},
we first need to choose an integration path $\gamma$ 
in such a way that the integral
equation admits a solution $u$ which converges uniformly to $1$, as $\hbar \to 0$,
in all sectors of the form $\hbar \in S_{\theta,\hbar_{\theta}}$ with $\theta \in [0,\frac\pi2[$.

The complex WKB method provides such a solution whenever the integration path $\gamma:[0,1] \to \mathbb{P}^1$ satisfies the following
inequality in the sector $\Re \hbar>0$
\begin{equation}\label{ineq:path}
 \left|\arg \hbar^{-1}\int_{t'}^{t} \sqrt{Q_0(\gamma(s))}\dot{\gamma}(s)ds \right|<\frac{\pi}{2}, 
\quad \forall t,t'\in]0,1[ \mbox{ such that } 0<t'<t<1 ,
\end{equation}
for one of the two choices of $\sqrt{Q_0}$.

It is straightforward to see that the only paths which satisfy inequality (\ref{ineq:path}) are the 
horizontal trajectory of $Q_0 dx^{\otimes 2}$, since these are
the steepest descent paths for the function $\Re w$, where $w(x):=\int^x \sqrt{Q_0(s)} ds$ (equivalently the arcs along which
$\Im w$ is constant). More precisely, the horizontal trajectories that serve our purposes are those horizontal trajectories that
can be prolonged indefinitely without crossing any zero of $Q_0$. These admit
a maximal extension to a simple closed Jordan curve $\gamma:=\gamma_{k,k'}:[0,1] \to \mathbb{P}^1$, that satisfies
the following 3 Properties
\begin{itemize}
 \item[(P1)] As $t \to 0$, $\gamma(t)$ is asymptotic to the ray of argument $\frac{2 \pi k}{5}$, for some $k \in \mathbb{Z}$.
 \item[(P2)] As $t \to 1$, $\gamma(t)$ 
is asymptotic to the ray of argument $\frac{2 \pi k'}{5}$, for some $k' \in \mathbb{Z}$, $k \not \equiv k' \mod 5$.
\item[(P3)]  $\frac{|\dot{\gamma}_{k,k'}(t)|}{|\gamma_{k,k'}(t)|}\to 1$
as $t\to 0$ or $t \to 1$, which implies that
$\int_0^1 | f(\gamma_{k,k'}(t))\dot{\gamma}_{k,k'}(t)| dt $ converges for every function $f$
continuous on the support of $\gamma_{k,k'}$, which decays at $\infty$ as $x^{-1-\e} $ for some $\e>0$. 
\end{itemize}
Properties (1,2) above were recalled in Section 6.1 of the main text. Property (3) follows rather directly from the
expansion $w = \frac{2}{5} x^{\frac{5}{2}}+O(x^{\frac12})$; see
 \cite[\S 7.3]{strebel} for a proper proof.
 
 We notice that the set of horizontal trajectories is naturally partitioned into subsets of trajectories with the same end points.
 Hence the following definition is quite natural.
\begin{definition}
 For every $k,k' \in \mathbb{Z}$, we denote by $\Gamma_{k,k'}$ the set of oriented horizontal trajectories leaving $\infty$
 parallel to the ray of argument $\frac{2\pi k}{5}$ and arriving at $\infty$ parallel to the ray of argument $\frac{2\pi k'}{5}$.
 Every element of $\Gamma_{k,k'}$ is endowed with a parametrisation $\gamma:[0,1] \to \mathbb{P}^1$, satisfying the properties P(1,2,3) listed
 above. We denote by the same symbol $\gamma$ an oriented trajectory and its parametrisation.
\end{definition}
\begin{remarks}\label{rem:paths}
\begin{itemize}
 \item[(i)] For every $k,k'$, the set $\Gamma_{k,k'}\cong\Gamma_{k',k}$ is either empty or diffeomorphic to the real line.
 
 The set $\mathbb{C} \setminus \cup_{k\neq k'}\lbrace x \in \gamma, \gamma \in \Gamma_{k,k'} \rbrace$ is known as the
 anti-Stokes complex. It is the union of the roots of $Q_0$ with the horizontal trajectories emanating from them.
 
 \item[(ii)]Given a point $q \in \mathbb{C}\setminus \lbrace Q_0(x)=0 \rbrace $, it belongs to at most one curve $\gamma \in \Gamma_{k,k'}$.
 
 If $q$ belongs to a curve $\gamma_q \in \Gamma_{k,k'}$, then this curve separates $\Gamma_{k,k'}\setminus \gamma_q$
 into two non-empty disjoint subsets.
 A curve belonging to one subset is homotopic in $\mathbb{P}^1 \setminus \lbrace q \rbrace$ to any other curve in the same subset,
 and not-homotopic to any curve belonging to the complementary subset.

 \item[(iii)]
 If $k'=k \pm1$, the set $\Gamma_{k,k'}$ is not empty and moreover
 $$
 \inf_{\gamma \in \Gamma_{k,k\pm1}}  \int_0^1 | f(\gamma(t))\dot{\gamma}(t)| dt =0
 $$
 for every function $f$ that is defined on a neighbourhood of $\infty$, which decays as $x^{-1-\e} $ for some $\e>0$.
 \item[(iv)]
 The condition $Q_0(x) dx^{\otimes 2}$ is saddle free can be rephrased as follows:
 there exists a $k$ such that both $\Gamma_{k,k+2}$ and $\Gamma_{k,k-2}$ are not empty.
\end{itemize}
\end{remarks}
In order to prove our main result, we need to relax inequality (\ref{ineq:path}) to allow for slightly more general integration curves.
\begin{definition}
 For every $k,k' \in \mathbb{Z}$ and any $\theta \in[0,\frac\pi2[$, we denote by $\Gamma^{\theta}_{k,k'}$
 the set of curves $\gamma:[0,1] \to \mathbb{P}^1$, satisfying the properties (P1,P2,P3) of the horizontal trajectories, and moreover
 such that there exists an $\e_{\gamma}>0$ such that
 \begin{equation}\label{eq:argument}
 \left|\arg \int_{t'}^{t} \sqrt{Q_0(\gamma(s))}\dot{\gamma}(s) ds \right|\leq\frac{\pi}{2}-\theta-\e_{\gamma}, \qquad \forall t'<t
\end{equation}
 for one of the two choices of $\sqrt{Q_0(x)}$.
\end{definition}
The great advantage of relaxing \eqref{ineq:path} to \eqref{eq:argument} is that we are able to deform the integration paths.
More precisely, we have the following Lemma.
\begin{lemma}\label{lem:deformedpath}
Suppose that $\Gamma_{k,k'}$ is not-empty. For any $\gamma \in \Gamma_{k,k'}$ such that $q \notin \gamma$, and
any $\theta \in [0,\frac\pi2[$, there exists a $\gamma_{\theta} \in \Gamma_{k,k'}^{\theta}$ satisfying the following properties:
\begin{itemize}
 \item $\gamma_{\theta}$ is homotopic to $\gamma$ in $\mathbb{P}^1\setminus \lbrace \lbrace Q_0(x)=0 \rbrace \cup \lbrace q \rbrace\rbrace$;
 \item there exist $0<t_1<t_2<1$ such that $\arg \gamma_{\theta}(t)=\frac{2\pi k}{5}$ for all $t\in]0,t_1]$
and $\arg \gamma_{\theta}(t)=\frac{2\pi k'}{5}$ for all $t\in[t_2,1[$.
\end{itemize}
\begin{proof}
 The easy proof is left to the reader.
\end{proof}
\end{lemma}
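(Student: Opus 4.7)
The plan is to leave $\gamma$ unchanged on a compact middle portion while replacing its two asymptotic tails by straight radial segments of arguments $\tfrac{2\pi k}{5}$ and $\tfrac{2\pi k'}{5}$, and then verify \eqref{eq:argument} via the asymptotic expansion of $\sqrt{Q_0}$ at infinity. The homotopy condition and the exact-ray condition at the endpoints are built into the construction; the real content lies in the argument estimate.

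Concretely, I fix $R_0>0$ large enough that $|x|<R_0$ contains every zero of $Q_0$ as well as the point $q$, and choose $t_1<t_1'<t_2'<t_2$ in $\,]0,1[\,$ so that $|\gamma(t)|\geq R_0$ for $t\in\,]0,t_1]\cup[t_2,1[\,$. I define $\gamma_\theta$ to coincide with $\gamma$ on $[t_1',t_2']$; to be a straight radial segment of argument $\tfrac{2\pi k}{5}$ on $\,]0,t_1]$ (parametrised so that $|\gamma_\theta(t)|\to\infty$ as $t\to 0$) and of argument $\tfrac{2\pi k'}{5}$ on $[t_2,1[\,$; and on the short bridges $[t_1,t_1']$, $[t_2',t_2]$ to be any smooth arc joining the two pieces inside the annulus $R_0\leq |x|\leq 3R_0$. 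The entire modification takes place in a punctured neighbourhood of $\infty$ disjoint from $\{Q_0=0\}\cup\{q\}$, so $\gamma_\theta$ is homotopic to $\gamma$ in $\bP^1\setminus(\{Q_0=0\}\cup\{q\})$, and the exact-ray condition at $t\to 0,1$ holds by construction.

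For the argument bound, I choose the branch of $\sqrt{Q_0}$ along $\gamma_\theta$ that makes $\sqrt{Q_0(\gamma(s))}\dot\gamma(s)$ real and positive on $[t_1',t_2']$, where $\gamma$ is a horizontal trajectory. On a radial tail $x=re^{i\cdot 2\pi k/5}$, the expansion $\sqrt{Q_0(x)}=\pm\, x^{3/2}(1+O(|x|^{-2}))$ combined with the identity $x^{3/2}\,dx=(-1)^k r^{3/2}\,dr$ gives
\[
\sqrt{Q_0(\gamma_\theta(s))}\dot\gamma_\theta(s)=\pm(-1)^k r^{3/2}\dot r\,(1+O(R_0^{-2})),
\]
which is real up to the indicated small error. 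Its sign is forced to be positive by continuity of the chosen branch from the middle portion, because the original $\gamma$ is itself asymptotic to the same radial direction and the integrand along $\gamma$ is positive there. On the two bridges, where the horizontal foliation is smooth and non-singular, one can arrange the tangent direction to deviate from the horizontal foliation by at most $O(R_0^{-1/2})$, using the continuity of the foliation away from its singularities.

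Therefore, by taking $R_0$ sufficiently large, the pointwise quantity $|\arg\sqrt{Q_0(\gamma_\theta(s))}\dot\gamma_\theta(s)|$ can be made smaller than any prescribed $\delta>0$. The inequality \eqref{eq:argument} then follows from the elementary observation that $|\arg\int_a^b f(s)\,ds|\leq \sup_s|\arg f(s)|$ whenever the integrand stays in a sector about the positive real axis, giving $\epsilon_\gamma=\tfrac{\pi}{2}-\theta-\delta>0$. The only delicate point is the sign-matching of $\sqrt{Q_0}$ on the two radial tails; this is resolved automatically by tracking the chosen branch continuously along $\gamma_\theta$ in $\bC\setminus\{Q_0=0\}$ and invoking the asymptotic behaviour of the original horizontal trajectory $\gamma$.
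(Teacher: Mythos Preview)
The paper's proof is simply ``The easy proof is left to the reader,'' so there is no approach to compare against; your construction---straightening the two tails of the horizontal trajectory to exact radial rays near $\infty$ and checking that $\sqrt{Q_0}\,d\gamma_\theta$ stays in a narrow sector about $\bR_{>0}$---is precisely the kind of argument the authors have in mind. The key points (homotopy in the simply connected punctured disc at $\infty$, the computation $x^{3/2}\,dx=(-1)^k r^{3/2}\,dr$ on the $k$-th ray, the convexity argument for $\arg\int f$, and the sign-matching via continuous extension of the branch from the horizontal middle portion) are all correct.

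Two small points worth tightening: first, your parameter bookkeeping is slightly inconsistent---you require $|\gamma(t)|\geq R_0$ only on $]0,t_1]\cup[t_2,1[$ but then place the bridge endpoints $\gamma(t_1'),\gamma(t_2')$ in the annulus $R_0\leq|x|\leq 3R_0$, which is not guaranteed; it is cleaner to choose $t_1',t_2'$ first with $|\gamma(t_1')|,|\gamma(t_2')|$ large, then build outward. Second, the sign-matching argument is most transparently phrased in the $w$-coordinate: since $\Im w=\mathrm{const}$ along $\gamma$ and $\Re w(\gamma(t))\to-\infty$ as $t\to 0$, and since the radial ray is the level set $\Im w=0$ nearby, the continuous branch of $w$ sends the radial tail to a nearly horizontal curve with $\Re w$ decreasing toward $-\infty$, which forces the positive sign directly.
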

We have introduced the integration curves which we will use to define the integral equation \eqref{eq:integral} and to prove Theorem 
\ref{thm:appendix}. Before dealing
with the analysis of \eqref{eq:integral}, we need a last preparatory lemma.
\begin{lemma}\label{lem:bounded}
 Let $\beta(x)dx$ be one of the forms considered in Lemma
 \ref{lem:forms}: namely $\beta$ is either $\widetilde{\alpha}_1$, or
 $\alpha_k$ $k\geq 2$, or $\widehat{F}_k$, $k\geq 0$. Then
 \begin{equation*}
  \int_0^1 |\beta(\gamma(t)) \dot{\beta}(t)| dt < \infty, \quad \forall \gamma \in \Gamma^{\theta}_{k,k'} \mbox{ such that } q \notin \gamma.
 \end{equation*}
\begin{proof}
 It follows from Lemma \ref{lem:forms} and Property (3) of the paths $\Gamma^{\theta}_{k,k'}$. 
\end{proof}
\end{lemma}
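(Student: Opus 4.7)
My plan is to reduce the question to two disjoint issues: (i) continuity of the integrand on the open interval $(0,1)$, and (ii) integrability of the integrand at the endpoints $t=0$ and $t=1$, where $\gamma(t)$ escapes to infinity. I would handle (i) by identifying the possible singularities of $\beta$ along $\gamma$, and (ii) by combining the growth bound at $\infty$ implicit in Lemma \ref{lem:forms} with Property (P3) of the paths $\Gamma^\theta_{k,k'}$.

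For (i): Lemma \ref{lem:forms} tells us that each admissible $\beta(x)\,dx$ is holomorphic on $X_s \setminus V$, where $V = B \cup \{(q,p),(q,-p)\}$. Pushing down to $\mathbb{P}^1$ via the double cover $p\colon X_s\to \mathbb{P}^1$, the possible singularities of $\beta$ (viewed as a section of $\pi_*(\text{forms})$ after the usual choice of branch of $\sqrt{Q_0}$) lie above the three zeroes of $Q_0$ and the point $q$. Since $\gamma$ is (a small perturbation of) a horizontal trajectory of $Q_0\,dx^{\tensor 2}$, it cannot pass through a zero of $Q_0$, as such a zero is a singular point of the horizontal foliation; and by hypothesis $q \notin \gamma$. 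Therefore the integrand $\beta(\gamma(t))\,\dot\gamma(t)$ is continuous on the open interval $(0,1)$, and the only remaining concern is its behavior as $t\to 0^+$ and $t\to 1^-$.

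For (ii): the proof of Lemma \ref{lem:forms} actually gives more than holomorphicity at $\infty$ on $X_s$; it shows that, in terms of the local coordinate $\tau = x^{-1/2}$ at the ramification point over $\infty$, each admissible $\beta(x)\,dx$ extends holomorphically. Concretely, this forces $|\beta(x)| = O(|x|^{-3/2})$ as $|x|\to \infty$. In particular $\beta$ decays faster than $x^{-1-\varepsilon}$ with $\varepsilon = 1/2$, and Property (P3) of the paths in $\Gamma^\theta_{k,k'}$ applies verbatim: $\int_0^1 |f(\gamma(t))\dot{\gamma}(t)|\,dt<\infty$ whenever $f$ decays at $\infty$ like $x^{-1-\varepsilon}$. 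Combining (i) and (ii) gives the claim.

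The main obstacle, such as it is, is purely bookkeeping: making sure that the degree count at $\infty$ used in the proof of Lemma \ref{lem:forms} really does give the $-3/2$ decay uniformly for all members of the infinite family $\alpha_k$ ($k\geq 2$) and $\widehat F_k$ ($k\geq 0$). This is the same recursive estimate on $d_k = \deg_\infty \alpha_k$ and $\widehat d_k = \deg_\infty \widehat F_k$ already carried out in Lemma \ref{lem:forms}, and it shows that the decay is in fact much better than $x^{-3/2}$ for $k$ large, so no new ingredient is needed.
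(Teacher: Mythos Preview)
Your proof is correct and is exactly the argument the paper has in mind: the one-line proof in the paper simply cites Lemma~\ref{lem:forms} (giving holomorphicity on $X_s\setminus V$ and the $O(|x|^{-3/2})$ decay at infinity) together with Property~(P3), and you have unpacked precisely these two ingredients. The only cosmetic point is that your justification for $\gamma$ avoiding the zeroes of $Q_0$ (``a small perturbation of a horizontal trajectory'') is slightly informal---a cleaner way to say it is that the defining inequality for $\Gamma^\theta_{k,k'}$ already requires a continuous branch of $\sqrt{Q_0}$ along $\gamma$---but this does not affect the argument.
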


We now prove the fundamental estimate underlying the complex WKB method.
\begin{prop}\label{prop:basicwkb}
Fix a $\theta \in [0,\frac{\pi}{2}[$, a $\gamma \in \Gamma^{\theta}_{k,k'}$ such that $q \notin \gamma$, a $t_0 \in ]0,1[$
and the branch of $\sqrt{Q_0(x)}$ in such a way that $\lim_{t\to 0^⁺}\Re \int_{t}^{t_0} \sqrt{Q_0(\gamma(t))}\dot{\gamma}(t) dt =\infty$.

 For any $\hbar_{\theta}>0$, there is a sequence of positive constants
 $C_{m}, m\geq 0$ - depending on $\theta, \gamma$ -
 and a unique sequence of solutions $y_{k,m}(x)$ of the deformed cubic oscillator satisfying the following inequality
 \begin{equation}\label{eq:WKBestimate}
 \sup_{t \in[0,1]} \left| \frac{y_{k,m}(\gamma(t))}{Y_m(\gamma(t),\gamma(t_0))}-1 \right| \leq C_{m} |\hbar|^{m+1} ,
 \quad \forall \hbar\in S_{\theta,\hbar_\theta}
 \end{equation}
 where $Y_m(x;\gamma(t_0))$ is the formal WKB solution defined by formula (\ref{eq:Ym}) with $x=\gamma(t),x'=\gamma(t_0)$.

 Moreover, the solutions $y_{k,m}$ satisfy the following properties
 \begin{enumerate}
  \item $\lim_{|x| \to \infty}y_{k,m}(e^{i\frac{2\pi k}{5}}|x|)=0$. Equivalently, $y_{k,m}(x)$
  is subdominant in the k-th Stokes sector.
  \item $y_{k,m}(x)=D_m \, y_{k,0}(x)$, with $D_m=\exp \big(- \sum_{k=2}^{m+1}\hbar^k \int_{0}^{t_0}
 \alpha_k(\gamma(t)) \dot{\gamma(t)} dt \big)$.
  \item  $\lim_{|x| \to \infty}|y_{k,m}(e^{i\frac{2\pi k'}{5}}|x|)|=\infty$.
  Equivalently, $y_{k,m}(x)$ is dominant in the k'-th Stokes Sector.
 \end{enumerate}
 
 \begin{proof}
 We introduce an order relation on $\gamma$: $v\leq x$ if $v=\gamma(s), x=\gamma(t)$ and $s\leq t$.
 We use the following  convention: $\int_{\gamma(s),\gamma}^{\gamma(t)} f(v) dv:= \int_{s}^t f(\gamma(t')) \dot{\gamma}(t') dt' $, and
 $\int_{\gamma(s),\gamma}^{\gamma(t)}| f(v) dv|:= \int_{s}^t |f(\gamma(t')) \dot{\gamma}(t')| dt' .$
 
 According to the general theory we provide the solution $y_{k,m}$ (hence we prove its existence)
  by analysing the integral equation \eqref{eq:integral} for the ratio $u(x):=\frac{y_{k,m}(x)}{Y_m(x)}$.
   For convenience we rewrite the integral equation in the following form
  \begin{equation}\label{eq:integralproof}
   u(x)=1- \hbar \int_{\infty,\gamma}^x\widehat{K}_m(x,v)  \widehat{F}_m(v) u(v)  dv ,
  \end{equation}
  where $\widehat{K}_m(x,v)=\hbar^{-1}\sqrt{Q_0(v)} \int_{v,\gamma}^x \frac{ Y_m^2(v)}{Y_m^{2}(r)} dr$, and
  $\widehat{F}_m(v)=\frac{F_m(v)}{\sqrt{Q_0(v)}}$
  with $F_m(x)$ as defined in \eqref{eq:Fm}.
  
  We divide the analysis of the integral equation \eqref{eq:integralproof} in two steps
  \begin{enumerate}
   \item  We show the estimate: For any given $\theta<\frac\pi2$, if $|\hbar|$ is smaller than an arbitrary, but fixed,
   constant $\hbar_{\theta} >0$, there exists a $C_m>0$ such that
  $|\widehat{K}_m(x,v)|\leq C_m $ for all $x,v \in \gamma, x\leq v$.
  \item We use the above estimate on $|\widehat{K}_m(x,v)|$ to study the integral equation and prove the thesis.
  \end{enumerate}

  Step 1. 
  In order to estimate  $\widehat{K}_m(x,v)$ we need to control the integral $\int_{v,\gamma}^x \frac{ Y_m^2(v)}{Y_m^{2}(r)} dr$,
  where $Y_m$ is the formal WKB solution. If $m\geq 1$, the integral $\int_{v,\gamma}^x \frac{ Y_m^2(v)}{Y_m^{2}(r)} dr$
  cannot be computed in close form. To overcome this difficulty we factorise 
  $Y_m(x)$ as $Y(x)T_m(x)$, where $Y$ is an unbounded function such that $\int Y^{-2}(r) dr$ can be computed in closed form,
  and $T_m$ is a bounded function (with bounded derivatives).
  
  Explicitly, we make the following choice
  $$Y(x)= \exp{\big( \int_{x',\gamma}^x  \hbar^{-1} \sqrt{Q_0(w)} -\frac{Q_0'(w)}{4Q_0(w)}dw \big)} , \quad 
  T_m(x)=\exp{\big( \int_{x',\gamma}^x  \widetilde{\alpha}_1(w) + \sum_{l=1}^{m+1}\hbar^l \alpha_l(w) dw \big)},$$
  where $x'=\gamma(t_0)$, and the forms $\alpha$ are as in \eqref{eq:alphaforms}.
  
  We notice that $Y^{-2}(r)= \frac{\hbar}{2}\frac{d}{dr} e^{-\frac{2}{\hbar}\int_{x'}^r \sqrt{Q_0(w)} dw}$ and we integrate by parts to obtain
  $$
  \widehat{K}_m(x,v)= \frac12 \left( e^{-\frac{2}{\hbar}\int_v^x \sqrt{Q_0(w)} dw}\frac{T^{2}_m(v)}{T^2_m(x)}-1 \right) -
  \frac12 \int_v^x e^{-\frac{2}{\hbar}\int_v^r Q(w) dw} \frac{d}{d r} \frac{T_m^2(v)}{T_m^{2}(r)} dr
  $$
  Due to Lemma \ref{lem:forms}, the functions $T_m(x), T_m^{-1}(x)$ as well as all their derivatives are uniformly bounded on $\gamma$,
  provided $|\hbar|$ is bounded.
  It follows that
  \begin{equation}\label{eq:middleestimate}
   |\widehat{K}_m(x,v)|\leq C_{m,1} \left(1+  |e^{-\frac{2}{\hbar}\int_v^x \sqrt{Q_0(r)} dr}| +
   \int_{v,\gamma}^x |e^{-\frac{2}{\hbar}\int_{v,\gamma}^r \sqrt{ Q_0(r')} dr'} dr| \right),
  \end{equation}
   where $C_{m,1}$ is a sufficiently high positive constant (in the third term we have used the H\"older inequality).
   
   By definition of $\Gamma_{k,k'}^{\theta}$ there exists an $\e_{\gamma}>0$ such that
\begin{equation}\label{eq:argproof}
 |\arg \int_{v}^{x} \sqrt{Q_0(\gamma(s)})ds |\leq\frac{\pi}{2}-\theta-\e_{\gamma}, \qquad \forall v,x \in \gamma(]0,1[) \mbox{ such that }
 v\leq x .
\end{equation}
From the above inequality it follows directly that $|e^{-\frac{2}{\hbar}\int_v^x \sqrt{Q(r)} dr}|\leq 1$ for all $v,x \in \gamma(]0,1[)$
such that $v\leq x$.
   
   We complete Step 1 by showing that the inequality \eqref{eq:argproof} implies that also the third term in
   \eqref{eq:middleestimate} is uniformly bounded by a constant $C_2$.
   To be more precise we show that there exists a $C_{2}<\infty$ such that
   \begin{equation}\label{eq:Exv}
    E(x,v):= \int_{v}^x |e^{-\frac{2}{\hbar}\int_{v}^{r} \sqrt{Q(r')} dr'}  dr| \leq C_{2}  \mbox{ if }
   v\leq x.
   \end{equation}
   We notice that, due to \eqref{eq:argproof}, under the function $w(x)=\int_{x'}^x \sqrt{Q_0(r)}dr$, the curve
   $\gamma$ is mapped onto a curve which is diffeomorphic
   to its projection on to the real axis. We call $g$ such a curve and we parametrise it by its real part; explicitly,
with $x=\Re w(\gamma(t))$, $\Re g(x)=x$, $\Im g(x)=\Im w(\gamma(t))$. Using $x=\Re (w(\gamma(t))$ as the new variable of integration
in \eqref{eq:Exv}, we transform the problem of bounding $E(x,v)$ into the equivalent problem: prove that there exists a $C_2'>0$ such that
\begin{equation}\label{eq:Exx'}
\widetilde{E}(x,y):=
\int_{y}^x \big|e^{-\frac{2}{\hbar}\big( g(x')-g(y)\big)}
\frac{\frac{d g(x')}{d x'}}{\sqrt{Q_0\big(\Phi(x')\big)}}\big|  dx' \leq C_2' , \qquad \forall y\leq x \in \mathbb{R} ,
\end{equation}
where $\Phi$ is the inverse of $w$ composed with $g$.
The functions $\big|Q_0^{-\frac12}\big(\Phi(x)\big)\big|$ and $|\frac{d g(x}{d x}|$ are bounded.
Indeed,
$Q_0^{-\frac12}(\Phi(\Re(x)))$ decays as $|x| \to \infty$ (one can show as $O(|x|^{-\frac{2}{5}})$); moreover
$|\frac{d g(x)}{dx}|$ converges
to $1$ as $|x| \to \infty$ by definition of $\Gamma_{k,k'}^{\theta}$ (Property (P3) of the steepest descent paths).
Hence if we show that $\int_{y}^x \big|e^{-\frac{2}{\hbar}\big( g(x')-g(y)\big)}\big|dx'$
is smaller than a constant $C_2''$ for all $y\leq x$, \eqref{eq:Exx'} follows by the H\"older inequality. To this aim,
we notice that \eqref{eq:argproof} implies that
$\big|e^{-\frac{2}{\hbar}\big( g(x')-g(y)\big)}\big|\leq e^{-\frac{2 \cos{(\frac{\pi}{2}-\e_{\gamma})}}{|\hbar|}(x'-y)}$ for all $y\leq x'$; integrating
the right hand side we obtain that
$$\int_y^x \big|e^{-\frac{2}{\hbar}\big( g(x')-g(y)\big)}\big| dx \leq \frac{|\hbar_{\theta}|}{2 \sin(\e_{\gamma}) }, \qquad \forall y\leq x ,$$
which completes the proof of Step 1.
%
%
  
 Step 2. We denote by $\mathcal{C}_{\gamma}$ the space of continuous functions supported on $\gamma$
 endowed with the supremum norm $\|f\|_{\infty}$.
 In this space we define the linear operator $K_m$ by the formula
 $$K_m[f](x)=-\hbar \int_{\infty,\gamma}^x \widehat{K}_m(x,v) \widehat{F}_m(v) f(v) dv$$
 which allows us to
write the integral equation \eqref{eq:integralproof} in the compact form $u=1+ K_m[u]$.
As we will show, this integral equation admits a (unique) continuous solution, which is of the form
$u=\sum_{N=0}^{\infty}K_m^N [1] $, where $1$ is the constant function $1$ on $\gamma$ and $K_m^N$ is
the $N$-th iterate of $K$.

After Step 1., for every $0<\theta<\frac\pi2$ there exist a sequence of positive constants $\hbar_{\theta},\widehat{C}_m,m\geq 0$ such that
$|\widehat{K}_m(x,s)|\leq \widehat{C}_m<\infty$ for all $\hbar \in S_{\theta,\hbar_{\theta}}$. Furthermore,
due to Lemma \ref{lem:bounded}, there exists another sequence of positive constants $\rho_m,m\geq0$ such that
$\int_{0}^1 |\widehat{F}_m(v) d v |  \leq |\hbar|^{m} \rho_{\gamma}$ for all $\hbar  \in S_{\theta,\hbar_{\theta}}$.
Using once again the H\"older inequality together with the two estimates above, we immediately obtain that
the operator $K_m$ is bounded as indeed its operator norm
$\|K_m\|$ is less or equal than $|\hbar|^{m+1}\widehat{C_m} \rho_{m}$. Namely
$$
\|K_m [f]\|_{\infty}\leq |\hbar|^{m+1} \widehat{C_m} \rho_{m}\|f\|_{\infty} \mbox{ for every bounded function } f .
$$
It is a basic fact of integral equations of Volterra type that $\|K_m^N\|= \frac{\|K_m\|^N}{N!}$ (in fact $K_m^N [f]$ is defined as
an integral on the $N$ dimensional simplex whose volume is $\frac{1}{N!}$; for a detailed proof see e.g. \cite[\S 79]{copson}).
It follows that the series $u=\sum_{N=0}^{\infty}K_m^N[1]$ converges in $\mathcal{C}_{\gamma}$,
and that $\|u-1\|_{\infty}\leq e^{|\hbar|^{m+1}\widehat{C_m} \rho_{m}}-1 $, for every $\hbar \in S_{\theta,\hbar_{\theta}}$.
Therefore the function $y_{k,m}(x):=u(x) Y_m(x)$ is a solution of the deformed cubic which satisfies the estimate
\eqref{eq:WKBestimate}.

We complete the proof by proving properties (1,2,3) of $y_{k,m}$, as well as its uniqueness.

(1) By construction $\lim_{t \to 0}Y_m(\gamma(t);\gamma(t_0))=0$ and $\gamma$ is asymptotic to the ray of argument
$e^{i\frac{2\pi k}{5}}$ for $t\to 0$. It follows that $y_{k,m}$ is subdominant in the $k-th$ Stokes sector.

It is well-known (see e.g. \cite{piwkb}) that in any given Stokes sector, the subdominant solution is uniquely defined up to a scale.
Hence property (1) implies hat $y_{k,m}$ is the unique solution satisfying \eqref{eq:WKBestimate}.

(2)For the same reason $y_{k,m}=D_m y_{k,0}$ for some $D_m \in \mathbb{C}^*$. 
Since $$\lim_{t \to 0}\frac{Y_m(\gamma(t);\gamma(t_0))}{Y_{0}(\gamma(t);\gamma(t_0))}=e^{-\sum_{k=2}^{m+1}\hbar^k \int_0^{t_0}\alpha_k(\gamma(t))
\dot{\gamma}(t) dt},$$ the thesis follows.

(3)The thesis follows from the fact that $\lim_{t \to 1}|Y_m(\gamma(t);\gamma(t_0))|=\infty$.

 \end{proof}

\end{prop}
\begin{remark}\label{rem:lowerpoint}
One of the hypothesis of the proposition above is that the lower integration point $x'$ in the definition of
$Y_m(x;x')$ belongs to the curve $\gamma$. However this condition can be dropped. Choose any other point $x''$ in the complex plane,
which is not a root of $Q_0(x)$, and a path $\gamma'$ connecting $x''$ to $x'$. The function
$\hat{y}_{k,m}=e^{\hbar^{-1}\sum_{k=0}^{m+1}\int_{x''}^{x'} \alpha_k(x) dx} y_{k,m}(x)$ is a new solution of the deformed cubic oscillator
and $ \frac{\hat{y}_{k,m}(x)}{Y_m(\gamma(t);x'')}$
satisfies the estimate \eqref{eq:WKBestimate}, since $ \frac{\hat{y}_{k,m}(x)}{Y_m(\gamma(t);x'')}=
 \frac{y_{k,m}(x)}{Y_m(x;x')}$ by construction.
\end{remark}

We have the following Corollary.
\begin{cor}\label{cor:residueatq}
Let $(q,p), p \neq 0$ be the point of $X_s$ used to define the potentials $Q_1,Q_2$. We have that
\begin{equation}
 \mbox{res}_{(q,\pm p)} \alpha_k(x) dx =0, \; \forall k \geq 2  .
\end{equation}
\begin{proof}
Instead of considering the forms $\alpha_k dx$ as meromorphic differentials on $X_s$, we can consider them
as multi-valued meromorphic differentials on $\mathbb{C}$. The thesis is then equivalent to
$\mbox{res}_{x=q} \alpha_k(x) dx =0,\forall k \geq 2 $ for both branches
of $\sqrt{Q_0(x)}$. We prove this statement here.

We fix a branch of $\sqrt{Q_0}$. We suppose -
without loss of generality\footnote{If $q$ is not in generic position, we can consider, instead of $X_s$  punctured at $(q,\pm p)$,
the isomorphic punctured curve, $X_{\e},\e \in \mathbb{C}$ punctured at $(q_{\e}=e^{i \e} q,\pm e^{i \frac32 \e}p)$, where
$X_{\e}$ is the (projectivization) of the affine elliptic curve obtained by twisting the coefficients $a,b$ of $Q_0$ as
$a \to e^{2i\e}a,b \to e^{3i\e}b $. If $\e \neq0$ is small then
$q_{\e}$ is in generic position. The same $\mathbb{C}^*$ action is discussed in the main text in Remark \ref{actions}(b).} -
that, there exists a pair $(k,k')$ and a $\gamma_q \in \Gamma_{k,k'}$ such that $q \in  \gamma_q$, and
$\lim_{t\to 0^⁺}\Re \int_{t}^{t_0} \sqrt{Q_0(\gamma_q(t))}\dot{\gamma_q}(t) dt =\infty$.


We can then choose two paths $\gamma,\gamma'$ in $\Gamma_{k,k'}$,
which are not homotopic in $\mathbb{C}\setminus \lbrace q \rbrace$; see Remark \ref{rem:paths}(ii).

We fix a $\theta \in[0,\frac\pi2[$.
According to Lemma \ref{lem:deformedpath}, $\gamma,\gamma'$ can be deformed to two paths
$\gamma_{\theta},\gamma'_{\theta} \in \Gamma_{k,k'}^{\theta}$ such that
$\gamma_{\theta}(t)=\gamma'_{\theta}(t)$ as $t\to 0$, and as $t\to 1$.
These are by construction non-homotopic paths in $\mathbb{P}^1 \setminus \lbrace q \rbrace$. Since $\gamma_{\theta},\gamma'_{\theta}$
coincide for large vaue of $|x|$, $\gamma_{\theta}-\gamma'_{\theta}$ defines a non-trivial closed loop
in $\mathbb{C}\setminus \lbrace q \rbrace$.
Let $t_0$ small enough so that $\gamma_{\theta}(t_0)=\gamma'_{\theta}(t_0)$. After Proposition \ref{prop:basicwkb}
it follows that there are positive constants $\hbar_{\theta}$ and $C_m, m \geq 0$ such that for all $\hbar \in S_{\theta,\hbar_{\theta}}$
 \begin{eqnarray}\label{eq:estimatecor}
  \sup_{t \in[0,1]} & \left| \frac{y_{k,m}(\gamma_{\theta}(t))}{Y_m(\gamma_{\theta}(t);\gamma_{\theta}(t_0))}-1 \right| \leq C_{m} |\hbar|^{m+1} , \quad
  \sup_{t \in[0,1]} & \left| \frac{y_{k,m}(\gamma_{\theta}'(t))}{Y_m(\gamma'_{\theta}(t);\gamma'_{\theta}(t_0))}-1 \right| \leq C_{m} |\hbar|^{m+1} .
 \end{eqnarray}
Here $Y_m$ is the m-th WKB approximation defined in \eqref{eq:Ym}.
Recall, from the main text, that every non-trivial solution of the deformed cubic is two valued and
the point $x=q$ is its branch point. By construction $\gamma_{\theta},\gamma'_{\theta}$ are not homotopic in $\mathbb{C}\setminus \lbrace q \rbrace$, hence
$\frac{y_{k,m}(\gamma_{\theta}(t))}{y_{k,m}(\gamma'_{\theta}(t))}=-1$ as $t \to 1$. Moreover we have that
$$
\frac{Y_{m}(\gamma_{\theta}(t);\gamma_{\theta}(t_0))}{Y_{m}(\gamma'_{\theta}(t);\gamma'_{\theta}(t_0))}=
- \exp{\left(2\pi i \sum_{k=2}^{m+1} \hbar^{k-1} \mbox{res}_q\alpha_{k}(x) dx\right)} , \; \mbox{ as }
t \to 1,
$$
since $e^{ 2\pi i\oint_{\gamma_{\theta}-\gamma'_{\theta}} \hbar \alpha_0(x)+ \alpha_{1}(x) dx}=-1$, by explicit computation.
Because of the above identities, the inequalities \eqref{eq:estimatecor} imply that for all $m\geq0$ there exists a $\widetilde{C}_m$ such that
$$
\left| \exp\big(2 \pi i\sum_{k=2}^{m+1} \hbar^{k-1} \mbox{res}_q\alpha_{k}(x) dx\big)-1 \right|\leq \widetilde{C}_m  |\hbar|^{m+1} , \mbox{ as } \hbar \to 0. 
$$
It immediately follows that $\mbox{res}_{x=q} \alpha_k(x) dx =0,  \forall k \geq 2 $, for the chosen branch of
$\sqrt{Q_0}$.
\end{proof}
\end{cor}

 \subsection{Lifting WKB solutions to $X_s$}\label{sec:Xs}
 By hypothesis the potential $Q_0(x)$ is saddle free, from which it follows that there exists a $k$ such that $\Gamma_{k,k\pm2}$ is not empty,
 see Remark \ref{rem:paths} (iv).
Without losing in generality, we suppose that $\Gamma_{0,\pm2} \neq \emptyset $ (the other cases are obtained by a rotation).
Hence we are in the situation depicted in Figure \ref{fig:case0}, and we can fix the roots $x_0,x_1,x_{-1}$
of $Q_0(x)$, as depicted in the same Figure.
\begin{figure}[ht]
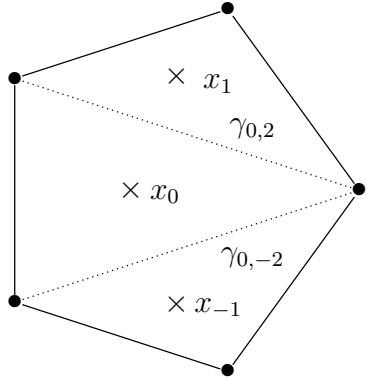

\begin{center}
  \begin{tabular}{c}
\xy /l1.2pc/:
(-5,0)*{\bullet}="1";
(-1.55,-4.76)*{\bullet}="2";
(4.05,-2.94)*{\bullet}="3";
(4.05,2.94)*{\bullet}="4";
(-1.55,4.76)*{\bullet}="5";
(1,0)*{\times}="6";
(-.2,-3)*{\times}="7";
(-.2,3)*{\times}="8";
{"1"\PATH~={**@{-}}'"2"},
{"2"\PATH~={**@{-}}'"3"},
{"3"\PATH~={**@{-}}'"4"},
{"4"\PATH~={**@{-}}'"5"},
{"5"\PATH~={**@{-}}'"1"},
(0.1,.1)*{x_0};
(-1.3,-2.8)*{x_{1}};
(-2.2,-1.6)*{\gamma_{0,2}};
(-1.3,3.1)*{x_{-1}};
(-2.2,1.8)*{\gamma_{0,-2}};
{"1"\PATH~={**@{.}}'"3"},
{"1"\PATH~={**@{.}}'"4"},
\endxy
\end{tabular}

\end{center}

\caption{Schematic representation of the trajectories $\gamma_{0,\pm2}$ and of the roots of $Q_0(x)$ \label{fig:case0}}

\end{figure}

We choose $3$ branch-cuts of the function $\sqrt{Q_0(x)}$: the $j-th$ cut, $j=-1,0,1$ connects
the roots $x_j$ with the point at $\infty$ and
and it asymptotic to the ray $\pi + j \frac{2\pi }{5}$, see Figure \ref{fig:doublesheeted} below.
The elliptic curve $X_s$ is thus realised the Riemann surface of the function $\sqrt{Q_0(x)}$, and we name 
the lower sheet the one fixed by the requirement $\lim_{x \to +\infty }\Re \sqrt{Q_0(x)}=+\infty$.
To represent a curve in $X_s$ as a curve in the two-sheeted covering, we draw a solid line when the curve belong to the upper sheet, and a
dashed line otherwise. 
\begin{figure}[htbp]
\begin{center}
\input{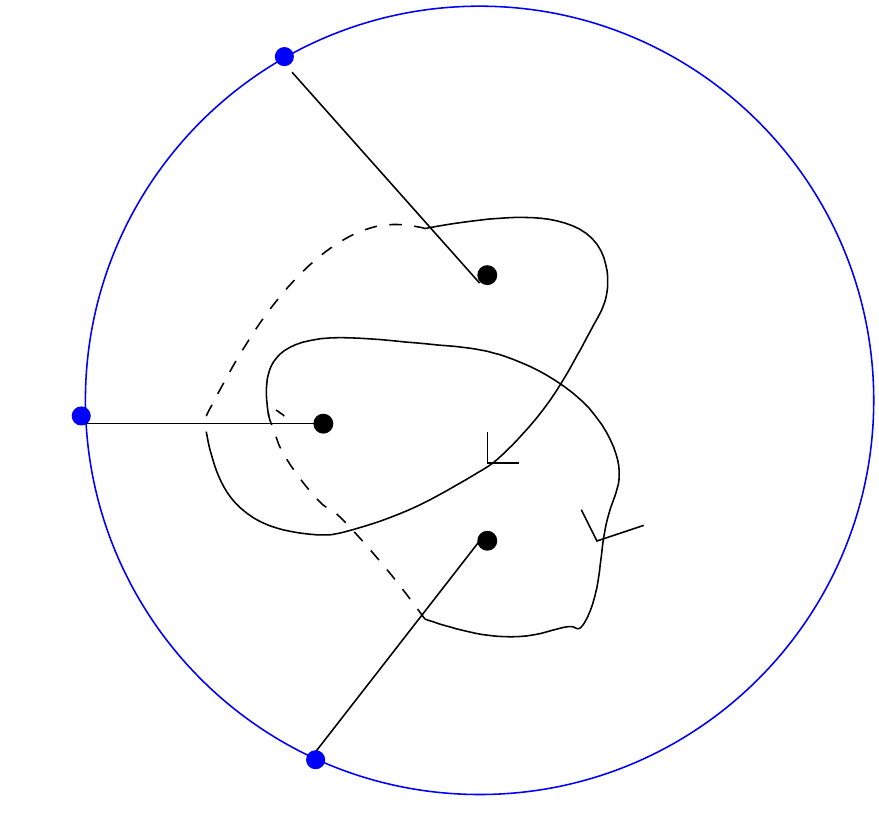_t}
\end{center}
\caption{The elliptic curve $X_s$ as a double-sheeted cover, with the cycles $\gamma_{1,2}$ defined by the WKB triangulation}
\label{fig:doublesheeted}
\end{figure}

After Proposition \ref{prop:basicwkb}, the subdominant solutions $y_k, k \in \mathbb{Z}/5\mathbb{Z}$ are
well-approximated on paths $\gamma \in \Gamma^{\theta}_{k,k'}$ by the m-th WKB approximation
\eqref{eq:Ym}, namely
$$
Y_{m}(x;x')=
 \exp\left\lbrace \hbar^{-1} \sum_{k=0}^{m+1} \hbar^k \int_{x',\gamma}^x  \alpha_k(s) ds \right\rbrace .
$$
provided the branch of $\sqrt{Q_0}$ is chosen in such a way that
\begin{equation}\label{eq:sheetcondition}
 \lim_{t\to 0^⁺}\Re \int_{t}^{t_0} \sqrt{Q_0(\gamma(t))}\dot{\gamma}(t) dt =\infty \; .
\end{equation}
Since the formal WKB solutions are written in terms of abelian differential on $X_s$, they are naturally defined
on the lift of $\gamma$ to $X_s$, which we call $\widehat{\gamma}$.
This is not only natural, but also very convenient since there is a unique way of lifting $\gamma$ that enforces
condition \eqref{eq:sheetcondition}. In fact,
taking into consideration our choice of the branch-cuts of $\sqrt{Q_0}$, the lift
of any path $\gamma$ belonging to $\Gamma^{\theta}_{k,k'}$ is defined as follows:
\begin{itemize}
 \item If $k=0$, $\widehat{\gamma}$ lies on the upper sheet
 for $t$ small. In fact, by definition, $\lim_{x \to +\infty} \Re \sqrt{Q_0(x)}=-\infty$, if $x$ belongs to the upper sheet.
 \item If $k\neq0$, $\widehat{\gamma}$ lies on the lower sheet
 for $t$ small.
\end{itemize}

We finish this Section by analysing the cycles $\gamma_{1,2} \in H_1(X_s^\circ \setminus B,\mathbb{Z})^-$ defined in 
Figure \ref{fig:doublesheeted}. Their image in $H_1(X_s,\mathbb{Z})$ coincide with the cycles $\gamma_{1,2}$
provided by the WKB triangulation, as defined in Section 6 of the Main Text. This indeed is equivalent to the
point (i) of the following Lemma.
\begin{lemma}\label{lem:normalization}
The paths $\gamma_1,\gamma_2 \in H_1(X_s^\circ \setminus B,\mathbb{Z})^-$ defined in Figure \ref{fig:doublesheeted} satisfy the following
normalisation
 \begin{itemize}
 \item[(i)] $\int_{\gamma_1} \sqrt{Q_0(x)} dx  =2\int_{x_{1}}^{x_0} \sqrt{Q_0(x)} dx$ and
 $\int_{\gamma_2} \sqrt{Q_0(x)} dx  =2\int_{x_{0}}^{x_{-1}} \sqrt{Q_0(x)} dx$ where the right hand side is 
 computed in the upper sheet. 
 \item[(ii)] $\Im \int_{\gamma_i}\sqrt{Q_0(x)}dx>0,\, i=1,2$.
 \item[(iii)] $[\gamma_1,\gamma_2]=1$
\end{itemize}
\begin{proof} 
(i) and (iii) are self-evident.
(ii) We prove $\Im \int_{\gamma_1}\sqrt{Q_0(x)}dx>0$, and leave the other case to the reader.
Recall the following facts from
 Section 6.2 of the Main Text:
 \begin{itemize}
  \item $x_1$ and $x_0$ belong to the closure of the simply connecetd domain -- we denote by $H$ -- which is
  foliated by the horizontal trajectories belonging to $\Gamma_{0,2}$. 
  \item The map $x \mapsto \int_{x_0}^x\sqrt{Q_0(u)}du$ is a conformal map of $H$ into a a horizontal strip.
  \item There is a path $l$ connecting $x_1$ with $x_0$ such that the angle between any $\gamma \in \Gamma_{0,2}$ and $l$ is a fixed, positive
  number $\pi\theta, \theta \in ]0,1[$.
 \end{itemize}
 If $x$ belongs to the upper-sheet then $\Re \int_{x_0}^{x}\sqrt{Q_0(u)}du$ increases along $\gamma$ for any $\gamma \in 
 \Gamma_{0,2}$.
 Since  $ \int_{x_0}^x\sqrt{Q_0(u)}du$ is conformal, it follows that $\Im\int \sqrt{Q_0(x)}dx $ increases along the line $l$
 connecting $x_1$ with $x_0$. The thesis follows.
\end{proof}

\end{lemma}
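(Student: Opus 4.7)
My plan is to handle the three parts separately, reducing each to a direct geometric observation about the double cover $p\colon X_s \to \mathbb{P}^1$ branched at $\{x_{-1},x_0,x_1,\infty\}$ together with the horizontal trajectory structure recorded in Figure \ref{fig:doublesheeted}.

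For part (i), I will describe $\gamma_1$ explicitly as the preimage under $p$ of the segment from $x_1$ to $x_0$, traversed once on each sheet. Since $\sqrt{Q_0}$ flips sign between sheets while the orientation of the path in $\mathbb{P}^1$ also reverses on the return leg, the two contributions add rather than cancel, yielding
\[
\int_{\gamma_1}\sqrt{Q_0(x)}\,dx = 2\int_{x_1}^{x_0}\sqrt{Q_0(x)}\,dx
\]
with the right-hand side evaluated on the upper sheet. The same argument applies to $\gamma_2$.

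For part (ii), I will use the maximal horizontal strip $H \subset \mathbb{P}^1$ foliated by the family $\Gamma_{0,2}$, whose boundary critical points are $x_0$ and $x_1$. The map
\[
\Phi(x) = \int_{x_0}^x \sqrt{Q_0(u)}\,du \qquad \text{(upper sheet)}
\]
is a biholomorphism from $H$ onto a horizontal strip in the $w$-plane, sending horizontal trajectories to horizontal lines and sending $x_0$, $x_1$ to the two horizontal boundary components. Consequently $\Im\Phi(x_1) = \pm h$ where $h>0$ is the width of the target strip, and combined with part (i) this gives $\Im \int_{\gamma_1}\sqrt{Q_0}\,dx = \pm 2h$. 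The orientation convention for $\gamma_1$ then fixes the positive sign. The argument for $\gamma_2$ is identical, using the $\Gamma_{0,-2}$-strip.

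For part (iii), I will read off the intersection number directly from Figure \ref{fig:doublesheeted}. The cycles $\gamma_1$ and $\gamma_2$ encircle the adjacent branch pairs $\{x_1,x_0\}$ and $\{x_0,x_{-1}\}$ respectively; they share exactly the branch point $x_0$, and a standard calculation on the two-sheeted cover of a four-punctured sphere shows that such a pair of cycles meets transversely in one point. The sign $+1$ follows from the stated orientations, yielding $[\gamma_1,\gamma_2] = 1$. The main obstacle will be the sign verification in part (ii): the conclusion depends on a three-way compatibility between the orientation of $\gamma_i$ as drawn, the identification of the upper sheet via $\lim_{x\to+\infty}\Re\sqrt{Q_0(x)} = -\infty$, and the labelling of the two boundary components of the target strip in the $w$-plane. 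A mismatch in any one would flip the sign, so the step requiring genuine care is reconciling all three conventions simultaneously.
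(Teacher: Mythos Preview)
Your proposal is correct and follows essentially the same approach as the paper: parts (i) and (iii) are declared self-evident there, and for (ii) the paper uses exactly the horizontal strip $H$ foliated by $\Gamma_{0,2}$ and the conformal map $x\mapsto\int_{x_0}^x\sqrt{Q_0(u)}\,du$ onto a horizontal strip. The only difference is that where you defer the sign determination to ``orientation conventions'' and flag it as the delicate step, the paper pins it down concretely: on the upper sheet $\Re\int_{x_0}^x\sqrt{Q_0}$ increases along any $\gamma\in\Gamma_{0,2}$, and since the saddle connection $l$ from $x_1$ to $x_0$ makes a fixed positive angle $\pi\theta$ with these trajectories, conformality forces $\Im\int\sqrt{Q_0}$ to increase along $l$, giving the positive sign directly.
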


\subsection{Proof of the Theorem \ref{thm:appendix}} \label{sec:proof}
The proof of the Theorem is based on the Proposition \ref{prop:basicwkb} and on the computation of the WKB approximation
of cross-ratios of asymptotic values, that the author developed in \cite{piwkb,myphd}.

As it was explained in Section \ref{sec:wkb} above, the hypothesis that the potential $Q_0(x)$ is saddle free
is equivalent to the property that there exists a $k$ such that $\Gamma_{k,k\pm2}$ is not empty.
Moreover, we can always reduce to the case that
$\Gamma_{0,\pm2} \neq \emptyset $, hence we are in the situation depicted in Figure
\ref{fig:case0} above.

For an arbitrary basis $\lbrace y, \widetilde{y} \rbrace$
of solutions to the deformed cubic oscillators, one defines the single-valued meromorphic function
$f(x)=\frac{y(x)}{\widetilde{y}(x)}$. The function $f$ has $5$ asymptotic values, $a_k, k \in \mathbb{Z}/5\mathbb{Z}$,
defined by the formula
\begin{equation}\label{eq:ak}
 a_k(\hbar)=\lim_{x\to + \infty }f(|x|e^{i \frac{2 \pi k}5}) \in \mathbb{P}^1 ,
\end{equation}
which is independent on the curves along which the limit is taken.

According to the main text, see equation \eqref{cr}, the Fock-Goncharov co-ordinates are defined as
cross-ratio of the asymptotic values
\begin{eqnarray}\label{eq:Tomformula}
 X_1(\hbar):=\CR(a_0,a_1,a_2,a_{-2})
 \quad X_2(\hbar):=\CR(a_0,a_2,a_{-2}, a_{-1})
 .
\end{eqnarray}
Here $\CR(a,b,c,d)=\frac{(a-b)(c-d)}{(a-d)(b-c)}$, is the cross-ratio.

In what follows we prove the thesis, namely equation \eqref{eq:thesis}, for the co-ordinate $X_2$.
The proof for the co-ordinate $X_1$ can obtained by repeating the very same steps, and it is therefore omitted.

%

\subsubsection{The apparent singularity}
Here we prove a generalization of formula (\ref{eq:ak}), which is useful in the presence of one apparent singularity.

 We fix a point $x' \in \mathbb{C}$ such that $x' \neq q $ and two local linearly independent solutions Moreover we fix two solutions
$y,\widetilde{y}$. Suppose that we have two paths,
$\gamma,\widetilde{\gamma}$, that connects $x'$ to $e^{i\frac{2\pi k}5}\infty$, that do not cross $x=q$, and that
coincide for large $x$, so that $\gamma-\widetilde{\gamma}$ can be thought as a Jordan curve (i.e. a simple closed curve) on $\mathbb{C}\setminus \lbrace q \rbrace$.
Denoting by $y_{\gamma}(x),\widetilde{y}_{\widetilde{\gamma}}(x)$ the analytic continuation of $y,\widetilde{y}$ along these paths,
we obtain the following expression for the asymptotic value $a_k$, which we will need below
\begin{equation}\label{eq:akratio}
 a_k=(-1)^{s(\gamma-\widetilde{\gamma})}\lim_{x\to + \infty} \frac{y_{\gamma}(xe^{i \frac{2 k \pi}5})}
 {\widetilde{y}_{\widetilde{\gamma}}(xe^{i \frac{2 \pi k}5})} .
\end{equation}
Here $s$ is the winding number of $\gamma-\widetilde{\gamma}$ around $q$.

The above formula is a consequence of \eqref{eq:ak} and the following fact:
for every non trivial solution, the point $x=q$ is a
branch point and the monodromy about $q$ is $-1$.

\subsubsection{The Fock-Goncharov co-ordinates in the small $\hbar$ limit}
In order to compute the asymptotic expansion of $X_2(\hbar)$ we need to choose a basis of solutions
with a known asymptotic expansion, and then compute the corresponding asymptotic values $a_k$ .
Our choice (the only possible) is
$\lbrace y_0,y_{-2}\rbrace$ where $y_0$ is the solution subdominant at $+\infty$ and $y_{-2}$ is the solution subdominant
at $e^{-\frac{4\pi}{5}i} \infty$.

Notice that $\lbrace y_0,y_{-2}\rbrace$ may in general fail to form a basis of solutions. They do however form a basis, whenever
$\hbar$ is small enough. Indeed, let us
fix a $\theta \in [0,\frac\pi2[$. By hypothesis $\Gamma_{0,-2}$ is not empty. Therefore, according
to Proposition \ref{prop:basicwkb}(3), there exists a $\hbar_{\theta}>0$
such that $\lim_{x \to + \infty }|y_0(x^{-\frac{4\pi}{5}}i)|=\infty$, for all $\hbar \in
S_{\theta,\hbar_\theta}$. This implies that
the solution $y_0$ and the solution $y_{-2}$ are linearly independent.
Hence $a_0=0$, $a_{-2}=\infty$, and formula \eqref{eq:Tomformula} reduces to
\begin{eqnarray} \label{eq:fgformula}
 X_2(\hbar)=-\frac{a_{2}(\hbar)}{a_{-1}(\hbar)} , \quad \forall \hbar \in S_{\theta,\hbar_\theta} .
\end{eqnarray}
%

\subsubsection{Integration paths used in the proof}
By hypothesis on the potential $Q_0$, the sets $\Gamma_{0,-2}$,$\Gamma_{0,2}$ are not empty, and the sets
$\Gamma_{-2,-1}$,$\Gamma_{-2,2}$, and $\Gamma_{0,-1}$ are not empty for every potential $Q_0$, see
Remark \ref{rem:paths}.

According to Lemma \ref{lem:deformedpath},
for every $\theta \in [0,\frac\pi2[$, we can choose paths
$\gamma_{0,\pm2} \in \Gamma^{\theta}_{0,\pm2},\gamma_{0,-1} \in \Gamma^{\theta}_{0,1},
\gamma_{-2,0} \in \Gamma^{\theta}_{-2,0},\gamma_{-2,2}\in \Gamma^{\theta}_{-2,2} $
satisfying the following properties
\begin{enumerate}
 \item $\gamma_{0,2}(t)=\gamma_{0,-2}(t)=\gamma_{0,-1}(t)$ for $t \in[0,t_0]$, with $t_0>0$. We denote by $x'=\gamma_{0,2}(t_0)$
 the (last) intersection point;
 \item $\gamma_{-2,0}(t)=\gamma_{0,-2}(1-t)$;
 \item $\gamma_{-2,2}(t)=\gamma_{-2,0}(t)=\gamma_{-2,-1}(t)$ for $t  \in[0,t_1]$, with $t_1$ small enough.
 We denote by $x''=\gamma_{-2,0}(t_1)$
 the (last) intersection point
 \item $\gamma_{-2,1}(t)=\gamma_{0,-1}(t)$ as $t \to 1$;
\end{enumerate}

After Proposition \ref{prop:basicwkb}, a subdominant solution in the $0$-th Sector, $y_0(x)$, is well-approximated by the
m-th WKB function
\begin{equation}\label{eq:Ymproof}
 Y^{(0)}_{m}(x;x')=
 \exp\left\lbrace \hbar^{-1} \sum_{k=0}^{m+1} \int_{x',\gamma}^x \hbar^k \alpha_k(s) ds \right\rbrace , \quad \forall x \in
 \gamma_{0,2} \cup \gamma_{0,-2} \cup \gamma_{0,-1},
\end{equation}
Here the integration path $\gamma$ is -depending on $x$ -  $\gamma_{0,2}$ or $\gamma_{0,-2}$ or $\gamma_{0,-1}$,
and the suffix $(0)$ stands to remind that the branch of $\sqrt{Q_0}$
is chosen in such a way that  $\lim_{t\to 0^⁺}\Re \int_{t}^{t_0} \sqrt{Q_0(\gamma_{0,2}(t))}\dot{\gamma}_{0,2}(t) dt =\infty$.
More precisely: there is a $\hbar_{\theta}>0$ and
a sequence of positive constants $C_{m,\theta}$ such that
\begin{equation}\label{eq:y0proof}
 \left|\frac{y_0(x)}{Y^{(0)}_{m}(x;x')}-1\right|\leq C_{m,\theta} |\hbar|^{m+1}, \quad x \in \gamma_{0,2} \cup \gamma_{0,-2} \cup \gamma_{0,-1},
 \hbar \in S_{\theta,\hbar_{\theta}} .
\end{equation}
The same hold for the subdominant solutions in the Sector $-2$. There are
are positive constants $\bar{C}_{m,\theta}$ and a subdominant solution $y_{-2}(x)$ such that
\begin{equation}\label{eq:y-2proof}
 \left|\frac{y_{-2}(x)}{Y^{(-2)}_{m}(x;x')}-1\right|\leq \bar{C}_{m,\theta} |\hbar|^{m+1},
 \quad x \in \gamma_{-2,0} \cup \gamma_{-2,-1} \cup \gamma_{-2,2},
 \hbar \in S_{\theta,\hbar_{\theta}} .
\end{equation}
where
\begin{equation}\label{eq:Ym-2proof}
 Y^{(-2)}_{m}(x;x')=\exp{\left\lbrace \hbar^{-1} \sum_{k=0}^{m+1} -\int_{x'',\gamma_{-2,0}}^{x'} \hbar^k \alpha_k(s) ds 
+ \int_{x'',\gamma}^x \hbar^k \alpha_k(s) ds \right\rbrace} , \, \forall x \in
 \gamma_{0,2} \cup \gamma_{0,-2} \cup \gamma_{0,-1}.
\end{equation}
In the above formula the integration path $\gamma$ is -depending on $x$ - $\gamma_{-2,0}$ or $\gamma_{-2,-1}$ or $\gamma_{-2,2}$,
and the suffix $(2)$ stands to remind that the branch of $\sqrt{Q_0}$
is chosen in such a way that  $\lim_{t\to 0}\Re \int_{t}^{t_1} \sqrt{Q_0(\gamma_{-2,0}(t))}\dot{\gamma}_{-2,0}(t) dt =\infty$.

\begin{figure}[htbp]
\begin{center}
\input{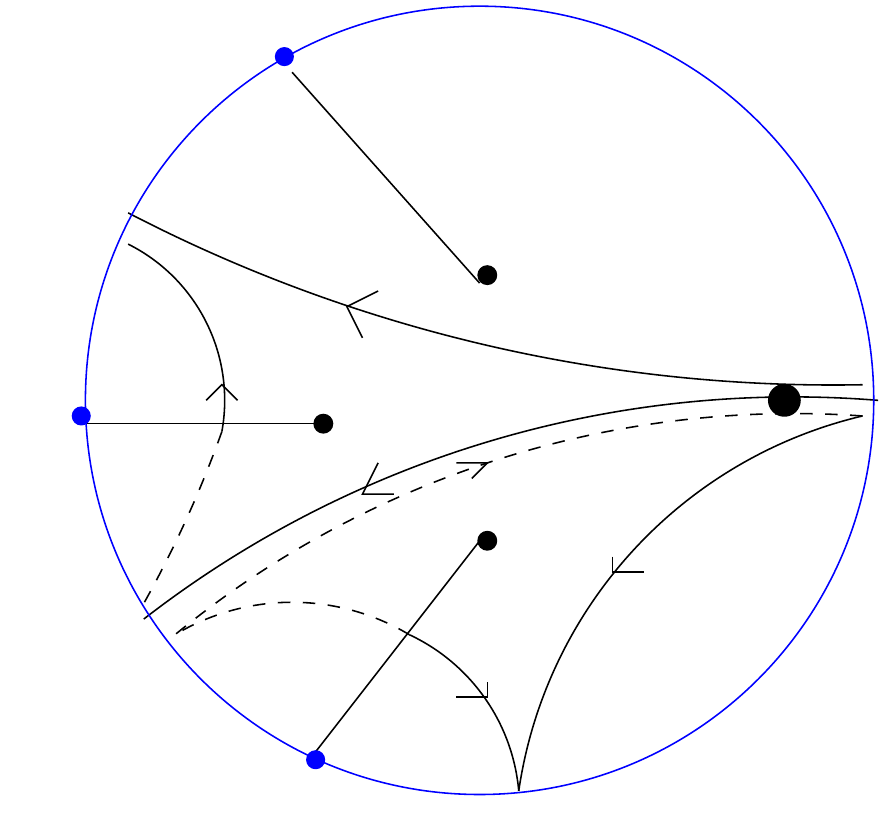_t}
\end{center}
\caption{The \textit{Bacalhau (cod) diagram}. Integration paths for the approximate functions $Y_0,Y_{\pm2}$.}
\label{fig:coddiagram}
\end{figure}
As it was explained in the Section \ref{sec:Xs}, the choice of the branch of $\sqrt{Q_0}$ can be enforced by lifting the integration paths
to $X_s$, which we described as a two-sheeted covering of the Riemann sphere.\footnote{See Figure \ref{fig:doublesheeted} above.  Recall: the lower sheet is the one such that $\lim_{x \to+\infty}\Re \sqrt{Q_0(x)}=+\infty$}
The lift is defined as follows: the lift of $\gamma_{0,2}, \gamma_{0,-2} ,\gamma_{0,-1}$
belongs to the upper (solid) sheet for $x \to +\infty$,
the lift of $\gamma_{-2,0},\gamma_{-2,-1},\gamma_{-2,2}$ belongs to the lower (dashed) sheet as $x \to e^{-\frac{4\pi}{5}i} \infty $.
Denoting by $\widehat{\gamma}_{k,k'}$ the lift of $\gamma_{k,k'}$, for any of the paths introduced, the situation is as
illustrated in the \textit{Bacalhau Diagram}, Figure \ref{fig:coddiagram}.

\subsubsection{Computation of the Fock-Goncharov co-ordinates in WKB approximation}
We can compute $a_{-1}(\hbar)$ in the WKB approximation using formulas (\ref{eq:akratio},\ref{eq:Ymproof},\ref{eq:y0proof},\ref{eq:y-2proof},
\ref{eq:Ym-2proof}).
After formula (\ref{eq:akratio}), we have
\begin{eqnarray}\nonumber
a_{-1}(\hbar)=(-1)^{s_{-}}
\lim_{t\to 1}\frac{y_0(\gamma_{0,-1}(t))}{y_{-2}(\gamma_{-2,-1}(t))} , \quad 
s_{-}=s(\gamma_{0,-1}-\gamma_{-2,-1}+\gamma_{-2,0}).
\end{eqnarray}

Defining
\begin{equation}\label{eq:epsilon-}
\epsilon_{-}(\hbar):=\lim_{t \to 1}\frac{y_0(\gamma_{0,-1}(t))}{Y^{(0)}_m(\widehat{\gamma}_{0,-1}(t);x')}
\frac{Y^{(-2)}_m(\widehat{\gamma}_{-2,-1}(t);x')}{y_{-2}(\gamma_{-2,-1}(t))}-1 ,
\end{equation}
we obtain
\begin{eqnarray*}
\lim_{t\to 1}\frac{y_0(\gamma_{0,-1}(t))}{y_{-2}(\gamma_{-2,-1}(t))}= \left(
\lim_{t\to 1}\frac{Y^{(0)}_m(\widehat{\gamma}_{0,-1}(t);x')}{Y^{(-2)}_m(\widehat{\gamma}_{-2,-1}(t);x')}\right)
\big(1+\epsilon_{-}(\hbar)\big) 
\end{eqnarray*}
After formulae (\ref{eq:Ymproof},\ref{eq:Ym-2proof}), we have that
\begin{equation}\label{eq:a-1}
 \lim_{t\to 1}\frac{Y_0(\gamma_{0,-1}(t);x')}{Y_{-2}(\gamma_{-2,-1}(t);x')}=
\exp\left(\hbar^{-1}\sum_{k=0}^{m+1} \hbar^{k} \int_{\gamma_2^-}\alpha_k(x) dx\right), 
\end{equation}
where $\gamma_2^-$ is the lift -which is not closed- of the closed path $\gamma_{0,-1}-\gamma_{-2,-1}+\gamma_{-2,0}$,
as depicted in Figure \ref{fig:gammasplit}. Finally,
after (\ref{eq:y0proof},\ref{eq:y-2proof}), we have that there exists a sequence of positive constants $C_{m,\theta}$ such that
\begin{equation*}
 \left|\epsilon_{-}(\hbar)\right|\leq C^-_{m,\theta} |\hbar|^{m+1} , \quad  \forall \hbar \in S_{\theta,\hbar_{\theta}}  ,
\end{equation*}
where $\epsilon_{-}(\hbar)$ is the constant defined in \eqref{eq:epsilon-}.

\begin{figure}[htbp]
\begin{center}
\input{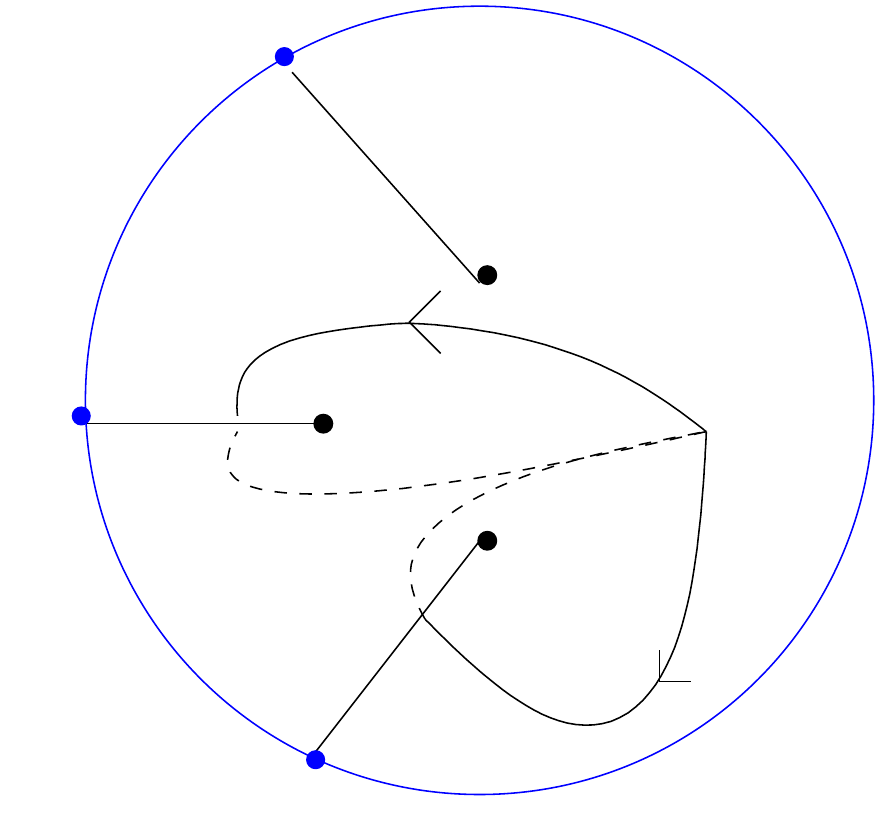_t}
\end{center}
\caption{The paths used in formulas (\ref{eq:a-1},\ref{eq:a-2})}
\label{fig:gammasplit}
\end{figure}

We can use the same strategy to compute $a_2(\hbar)$ to obtain the following statement:
There exists a sequence of constants $\hbar_{\theta},C^+_{m,\theta}$ such that
\begin{equation}\label{eq:a-2}
 (-1)^{s_+} a_2(\hbar) \exp\left(-\hbar^{-1}\sum_{k=0}^{m+1} \hbar^{k} \int_{\gamma_2^+}\alpha_k(x) dx\right) =\big(1+\epsilon_{+}(\hbar)\big),
 \mbox{ with } \left| \epsilon_{+}\right| \leq C_{m,\theta}^+ |\hbar|^{m+1}, \; \forall \hbar \in S_{\theta,\hbar_{\theta}}.
\end{equation}
Here $s_+=s(\gamma_{0,2}-\gamma_{-2,2}+\gamma_{-2,0})$ and
$\gamma_2^+$ is the lift of $\gamma_{0,2}-\gamma_{-2,2}+\gamma_{-2,0}$, as depicted in Figure \ref{fig:gammasplit}.

We notice that
\begin{equation}\label{eq:homologcom}
 \gamma_2^+-\gamma_2^-=-\gamma_2 \mbox{ in } H_1(X_s^{\circ}\setminus B,\mathbb{Z})^-
\end{equation}
where $\gamma_2^+,\gamma_2^-$ are the curves defined in Figure \ref{fig:gammasplit} and $\gamma_2$ is the basis element
of $H_1(X_s^{\circ}\setminus B,\mathbb{Z})^-$ defined in Figure \ref{fig:doublesheeted}.
Combining \eqref{eq:a-1}, \eqref{eq:a-2}, and \eqref{eq:homologcom}, we obtain the following result:
For every $\theta \in[0,\frac\pi2$, there exist $\hbar_\theta>0$ and a sequence of positive constants $C_{m,\theta}>0,m\geq 0$ such that
\begin{equation}\label{eq:almost1}
 X_2(\hbar) e^{\hbar^{-1}\int_{\gamma_2}\sqrt{Q_0(x)}dx}= - (-1)^{(s_++s_-)} \big(1+ \epsilon_2(\hbar)\big)
 \exp\left(-\hbar^{-1}\sum_{k=1}^{m+1} \hbar^{k} \int_{\gamma_2}\alpha_k(x) dx\right),
\end{equation}
where $|\epsilon_{2}(\hbar)|\leq C_m |\hbar|^{m+1}$, for all $\hbar \in S_{\theta,\hbar_{\theta}}$.

We are left to show that 
equation \eqref{eq:almost1} is equivalent to equation \eqref{eq:thesis} (for the index $i=2$).
Comparing the two equations, we see that they are equivalent if and only if
\begin{equation}
 -(-1)^{(s_++s_-)}\exp\left( -\int_{\gamma_2}\alpha_1(x) dx\right)=\exp\left(- \int_{\gamma_2}\widetilde{\alpha}_1(x)+\frac{1}{2(x-q)} dx\right)
\end{equation}
where $\widetilde{\alpha}_1(x)=\alpha_1(x)+\frac{Q'_0(x)}{4Q_0(x)}$ as per \eqref{eq:alphaforms}.
This is indeed the case. In fact, by the residue theorem we have that
$\int_{\gamma_2}\frac{dx}{2(x-q)}=i \pi \sigma$ where $\sigma$ is the winding number of the projection of $\gamma_2$ around $q$,
and
$\int_{\gamma_2}\frac{Q'(x)}{4 Q(x)} dx=-i \pi$. 

\begin{remark}
The co-ordinates $X_1,X_2$ are strictly related to the Stokes multipliers of the cubic oscillator. These are defined as follows:
For every $k \in \mathbb{Z}/5\mathbb{Z}$ one chooses a normalisation of
the subdominant solutions $y_k,y_{k\pm1}$ of equations \eqref{eq:potApp}, see \cite{Mas1} for the precise definition,
in such a way that
\begin{equation*}
 y_{k+1}(x)=y_{k-1}(x)+\sigma_k \, y_k(x)
\end{equation*}
for some uniquely defined $\sigma_k \in \mathbb{C}$, which are the Stokes multipliers, .

It was proven in \cite[\S 2]{Mas1} that each Stokes multiplier can be expressed as the cross-ratio of $4$ asymptotic values, namely
\begin{equation}\label{eq:Stokes}
 \sigma_k=i \CR (a_{k-1}a_{k+1}, a_{k+2},a_{k-2}) .
\end{equation}

Now assume that the potential $Q_0$ is saddle-free. It follows that there is
a unique $l \in \mathbb{Z}/ 5\mathbb{Z}$ such that the sets of horizontal trajectories $\Gamma_{l,l\pm2}$ are not empty.
Comparing \eqref{eq:Stokes} with \eqref{cr} we obtain
\begin{equation}\label{eq:multipliersvscoordinates}
 X_1=  \big( -i \sigma_{l-1}\big)^{-1} \, , \quad  X_2=  -i \sigma_{l+1} .
\end{equation}

\end{remark}

\end{appendix}


\bibliographystyle{amsplain}

\end{document}